\newtheorem{thm}{Theorem}[section]
\newtheorem{cor}[thm]{Corollary}
\newtheorem{prop}[thm]{Proposition}
\newtheorem{lem}[thm]{Lemma}
\newtheorem{quest}[thm]{Question}
\newtheorem{conj}{Conjecture}
\theoremstyle{definition}
\newtheorem{defn}[thm]{Definition}
\newtheorem{defns}[thm]{Definitions}
\theoremstyle{remark}
\numberwithin{equation}{section}
\title{Integer Complexity: Algorithms and Computational Results}
\author{Harry Altman}
\date{December 20, 2017}
\begin{document}

\newcommand{\cpx}[1]{\|#1\|}
\newcommand{\dft}{\delta}
\newcommand{\st}{{st}}
\newcommand{\xpdd}[1]{\hat{#1}}
\newcommand{\drop}{\Delta}
\newcommand{\badfac}{\kappa}
\newcommand{\cR}{R}
\newcommand{\acl}{\ell}

\newcommand{\NUM}{{48}}
\newcommand{\TWONUM}{{96}}

\newcommand{\N}{{\mathbb N}}
\newcommand{\R}{{\mathbb R}}
\newcommand{\Z}{{\mathbb Z}}
\newcommand{\Q}{{\mathbb Q}}
\newcommand{\sS}{{\mathcal S}}
\newcommand{\sT}{{\mathcal T}}

\newcommand{\floor}[1]{{\lfloor #1 \rfloor}}
\newcommand{\ceil}[1]{{\lceil #1 \ceil}}

\begin{abstract}
Define $\cpx{n}$ to be the \emph{complexity} of $n$, the smallest number of ones
needed to write $n$ using an arbitrary combination of addition and
multiplication.  Define $n$ to be \emph{stable} if for all $k\ge 0$, we have
$\cpx{3^k n}=\cpx{n}+3k$.  In \cite{paper1}, this author and Zelinsky showed
that for any $n$, there exists some $K=K(n)$ such that $3^K n$ is stable;
however, the proof there provided no upper bound on $K(n)$ or any way of
computing it.  In this paper, we describe an algorithm for computing $K(n)$, and
thereby also show that the set of stable numbers is a computable set.  The
algorithm is based on considering the \emph{defect} of a number, defined by
$\dft(n):=\cpx{n}-3\log_3 n$, building on the methods presented in
\cite{theory}.  As a side benefit, this algorithm also happens to allow fast
evaluation of the complexities of powers of $2$; we use it to verify that
$\cpx{2^k 3^\ell}=2k+3\ell$ for $k\le\NUM$ and arbitrary $\ell$ (excluding the
case $k=\ell=0$), providing more evidence for the conjecture that $\cpx{2^k
3^\ell}=2k+3\ell$ whenever $k$ and $\ell$ are not both zero.  An
implementation of these algorithms in Haskell is available.
\end{abstract}

\maketitle

\section{Introduction}
\label{intro}

The \emph{complexity} of a natural number $n$ is the least number of $1$'s
needed to write it using any combination of addition and multiplication, with
the order of the operations specified using  parentheses grouped in any legal
nesting.  For instance, $n=11$ has a complexity of $8$, since it can be written
using $8$ ones as
\[ 11=(1+1+1)(1+1+1)+1+1,\] but not with any fewer than $8$.  This notion was
implicitly introduced in 1953 by Kurt Mahler and Jan Popken \cite{MP}; they
actually considered an inverse function, the size of the largest number
representable using $k$ copies of the number $1$.  (More generally, they
considered the same question for representations using $k$ copies of a positive
real number $x$.) Integer complexity was explicitly studied by John Selfridge,
and was later popularized by Richard Guy \cite{Guy, UPINT}.  Following J. Arias
de Reyna \cite{Arias} we will denote the complexity of $n$ by $\cpx{n}$.

Integer complexity is approximately logarithmic; it satisfies the bounds
\begin{equation*}\label{eq1}
3 \log_3 n= \frac{3}{\log 3} \log  n\le \cpx{n} \le \frac{3}{\log 2} \log n  ,\qquad n>1.
\end{equation*}
The lower bound can be deduced from the result of Mahler and Popken, and was
explicitly proved by John Selfridge \cite{Guy}. It is attained with equality for
$n=3^k$ for all $k \ge1$.  The upper bound can be obtained by writing $n$ in
binary and finding a representation using Horner's algorithm. It is not sharp,
and the constant $\frac{3}{\log2} $ can be improved for large $n$ \cite{upbds}.

One can compute $\cpx{n}$ via dynamic programming, since $\cpx{1}=1$, and for
$n>1$, one has
\[
\cpx{n}=\min_{\substack{a,b<n\in \mathbb{N} \\ a+b=n\ \mathrm{or}\ ab=n}}
	(\cpx{a}+\cpx{b}).
\]
This yields an algorithm for computing $\cpx{n}$ that runs in time
$\Theta(n^2)$; in the multiplication case, one needs to check $a\le \sqrt{n}$,
and, na\"ively, in the addition case, one needs to check $a\le n/2$.  However,
Srinivas and Shankar \cite{waset} showed that the upper bound on the addition
case be improved, lowering the time required to $O(n^{\log_2 3})$, by
taking advantage of the inequality $\cpx{n}\ge 3\log_3 n$ to rule out cases when
$a$ is too large.  Arias de Reyna and Van de Lune \cite{anv} took this further
and showed that it could be computed in time $O(n^\alpha)$, where
\[ \alpha=
\frac{\log(3^6 2^{-10}(30557189+21079056\sqrt[3]{3}+14571397\sqrt[3]{9}))}
{\log(2^{10}3^7)}
<1.231;\]
this remains the best known algorithm for computing $\cpx{n}$ for general $n$.

The notion of integer complexity is similar in spirit but different in detail
from the better known measure of \emph{addition chain length}, which has
application to computation of powers, and which is discussed in detail in
Knuth \cite[Sect. 4.6.3]{TAOCP2}.  See also \cite{adcwo} for some interesting
analogies between them; we will discuss this further in
Section~\ref{fksec}.

\subsection{Stability considerations}

One of the easiest cases of complexity to determine is powers of $3$; for any
$k\ge1$, one has $\cpx{3^k}=3k$.  It's clear that $\cpx{3^k}\le 3k$ for any
$k\ge1$, and the reverse inequality follows from Equation~\eqref{eq1}.

The fact that $\cpx{3^k}=3k$ holds for all $k\ge 1$ might prompt one to ask
whether in general it is true that $\cpx{3n}=\cpx{n}+3$.  This is false for
$n=1$, but it does not seem an unreasonable guess for $n>1$.  Nonetheless, this
does not hold; the next smallest counterexample is $n=107$, where $\cpx{107}=16$
but $\cpx{321}=18$.  Indeed, not only do there exist $n$ for which
$\cpx{3n}<\cpx{3n}+3$, there are $n$ for which $\cpx{3n}<\cpx{n}$; one
example is $n=4721323$.  Still, this guess can be rescued.  Let us first make a
definition:

\begin{defn}
A number $m$ is called \emph{stable} if $\cpx{3^k m}=3k+\cpx{m}$ holds for every
$k \ge 0$.
Otherwise it is called \emph{unstable}.
\end{defn}

In \cite{paper1}, this author and Zelinsky showed:
\begin{thm}
\label{basicstab}
For any natural number $n$, there exists $K\ge 0$ such that $3^K n$ is stable.
That is to say, there exists a minimal $K:= K(n)$ such that for any $k\ge K$,
\[ \cpx{3^k n}=3(k-K)+\cpx{3^K n}. \]
\end{thm}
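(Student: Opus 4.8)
The plan is to work entirely with the defect $\dft(n)=\cpx{n}-3\log_3 n$, exploiting two features: a monotonicity property of $\dft$ along the sequence $n,3n,9n,\dots$, and a discreteness property of the set of values $\dft$ takes. First I would record the elementary facts. By the lower bound $\cpx{n}\ge 3\log_3 n$ we have $\dft(n)\ge 0$ for every $n$ (with $\dft(1)=1$). Next, since $3m=(1+1+1)\cdot m$ gives $\cpx{3m}\le\cpx{m}+3$, we obtain
\[ \dft(3m)=\cpx{3m}-3-3\log_3 m\le\cpx{m}-3\log_3 m=\dft(m), \]
so $k\mapsto\dft(3^k m)$ is non-increasing.

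I would then reformulate stability in terms of the defect. Because $\cpx{3^k m}\le\cpx{m}+3k$ always holds, $m$ is stable if and only if $\cpx{3^k m}\ge\cpx{m}+3k$ for all $k$, which upon subtracting $3\log_3(3^k m)$ from both sides is exactly $\dft(3^k m)\ge\dft(m)$ for all $k$; combined with the monotonicity just established, $m$ is stable precisely when the non-increasing sequence $\bigl(\dft(3^k m)\bigr)_{k\ge 0}$ is constant. I would also note the auxiliary fact that the set of exponents $j$ with $3^j n$ stable is upward closed: if $3^j n$ is stable then $\cpx{3^k(3^{j+1}n)}=\cpx{3^{k+1}(3^j n)}=\cpx{3^j n}+3(k+1)=\cpx{3^{j+1}n}+3k$, so $3^{j+1}n$ is stable.

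The crucial input is the following discreteness statement, which is the main theorem of \cite{paper1}: the set of defect values $\{\dft(m):m\in\N\}$ is a well-ordered subset of $[0,\infty)$; equivalently, it contains no infinite strictly decreasing sequence. Granting this, the non-increasing sequence $\bigl(\dft(3^k n)\bigr)_k$ takes values in that set and hence is eventually constant: there is a least $K\ge 0$ with $\dft(3^k n)=\dft(3^K n)$ for all $k\ge K$. Unwinding the definition of $\dft$, for every $j\ge 0$ we then get
\[ \cpx{3^j(3^K n)}=\dft(3^{K+j}n)+3\log_3(3^{K+j}n)=\dft(3^K n)+3j+3\log_3(3^K n)=\cpx{3^K n}+3j, \]
so $3^K n$ is stable, and for $k\ge K$ we have $\cpx{3^k n}=\cpx{3^{k-K}(3^K n)}=\cpx{3^K n}+3(k-K)$, the asserted equation; minimality of this $K$ follows from the upward-closedness noted above together with the fact that some such exponent exists.

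The main obstacle is, of course, the well-ordering of the defect set, which does all the real work — everything above it is bookkeeping. Establishing it requires a structural understanding of the integers whose defect lies below a fixed bound: one shows that, modulo the ambiguity of multiplying by powers of $3$, such integers fall into finitely many parametrized families (``low-defect polynomials''), and then one argues by induction on order type, the base case being that $\dft(m)=0$ forces $m=3^k$ for some $k\ge1$. I expect this structural/inductive part to be the hard and lengthy component, whereas the deduction of Theorem~\ref{basicstab} from it, sketched above, is short.
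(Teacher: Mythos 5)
Your bookkeeping is fine and matches the standard treatment: $\dft(n)\ge 0$, $\dft(3m)\le\dft(m)$, the reformulation of stability as constancy of the non-increasing sequence $\bigl(\dft(3^k n)\bigr)_{k\ge 0}$, and the upward closure of the set of stable exponents are all correct, and they are exactly the ingredients used in \cite{paper1}, where this theorem is actually proved (the present paper only quotes it). The problem is the step you yourself identify as carrying all the weight. The well-ordering of the set of defect values is not the main theorem of \cite{paper1}; it is the main theorem of the later paper \cite{paperwo}, and its development there is built on top of the defect/stability theory of \cite{paper1}, including stable defects and stable complexity, whose very definitions rest on Theorem~\ref{basicstab}. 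So as a route to Theorem~\ref{basicstab}, your proposal is at best a dependency inversion that would require re-deriving well-ordering from scratch without ever invoking stabilization --- a substantial undertaking you have not carried out and do not flag --- and at worst circular.

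The detour is also unnecessary, because you overlooked the elementary fact that the drops are integers. For every $k$, $\dft(3^k n)-\dft(3^{k+1} n)=\cpx{3^k n}+3-\cpx{3^{k+1} n}$ is a nonnegative integer (this is part (2) of Theorem~\ref{oldprops}, immediate from $\cpx{3m}\le\cpx{m}+3$ and the integrality of complexity). Since the sequence $\bigl(\dft(3^k n)\bigr)_k$ is non-increasing, bounded below by $0$, and decreases by at least $1$ whenever it decreases at all, it can decrease at most $\lfloor\dft(n)\rfloor$ times and is therefore eventually constant; your own unwinding of the definitions then finishes the argument, including the minimality of $K$. This one-line replacement for the well-ordering input is essentially the proof given in \cite{paper1}; no low-defect polynomial machinery or induction on order types is needed for this theorem.
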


This can be seen as a ``rescue'' of the incorrect guess that
$\cpx{3n}=\cpx{n}+3$ always.  With this theorem, it makes sense to define:

\begin{defn}
Given $n\in \mathbb{N}$, define $K(n)$, the \emph{stabilization length} of $n$,
to be the smallest $k$ such that $3^k n$ is stable.
\end{defn}

We can also define the notion of the \emph{stable complexity} of $n$ (see
\cite{paperwo}), which is, intuitively, what the complexity of $n$ would be ``if
$n$ were stable'':

\begin{defn}
For a positive integer $n$, we define the \emph{stable complexity} of $n$,
denoted $\cpx{n}_\st$, to be $\cpx{3^k n}-3k$ for any $k$ such that $3^k n$ is
stable.  This is well-defined; if $3^k n$ and $3^\ell n$ are both stable, say
with $k\le \ell$, then
\[\cpx{3^k n}-3k=3(k-\ell)+\cpx{3^\ell n}-3k=\cpx{3^\ell n}-3\ell.\]
\end{defn}

The paper \cite{paper1}, while proving the existence of $K(n)$,
gave no upper bound on $K(n)$ or indeed any way of computing it.  Certainly one
cannot compute whether or not $n$ is stable simply by computing for all $k$ the
complexity of $3^k n$; one can guarantee that $n$ is unstable by such
computations, but never that it is stable.  And it's not clear that
$\cpx{n}_\st$, though it has been a useful object of study in \cite{paperwo},
can actually be computed.

\subsection{Main result}
We state the main result.

\begin{thm}
\label{frontpagethm}
We have:
\begin{enumerate}
\item The function $K(n)$, the stabilization length of $n$, is a computable
function of $n$.
\item The function $\cpx{n}_\st$, the stable complexity of $n$, is a computable
function of $n$.
\item The set of stable numbers is a computable set.
\end{enumerate}
\end{thm}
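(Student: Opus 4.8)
The plan is to obtain all three parts from a single computation, that of an explicit, computable upper bound $k^\ast(n) \ge K(n)$. Given such a bound, the number $3^{k^\ast(n)} n$ is stable, so $\cpx{n}_\st = \cpx{3^{k^\ast(n)} n} - 3 k^\ast(n)$; since $\cpx{\cdot}$ is computable by the dynamic-programming recursion of the introduction, this gives part (2). Part (1) then follows, since $K(n)$ is the least $k \le k^\ast(n)$ with $\cpx{3^k n} - 3k = \cpx{n}_\st$, and part (3) follows because $n$ is stable exactly when $K(n) = 0$. So the whole problem reduces to producing, algorithmically from $n$, some $k^\ast(n) \ge K(n)$; that at least one such $k^\ast(n)$ exists is Theorem~\ref{basicstab}.

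I would work throughout with the defect $\dft(m) = \cpx{m} - 3\log_3 m$. Two facts are basic: $\dft(m) \ge 0$, which is the Mahler--Popken lower bound, and $\dft(3m) \le \dft(m)$, since $\cpx{3m} \le \cpx{m} + 3$. Hence $k \mapsto \dft(3^k n)$ is nonincreasing and bounded below, so it is eventually constant, with some limit $\dft_\st(n) \le \dft(n)$; and $K(n)$ is precisely the least $k$ with $\dft(3^k n) = \dft_\st(n)$, since a nonincreasing sequence stays at its limit once it reaches it, while $3^k n$ is stable iff $\dft(3^k n)$ has already reached $\dft_\st(n)$. Now $r := \dft(n)$ is computable, and the sequence $k \mapsto \dft(3^k n)$ changes only by integer amounts, with total decrease $\cpx{n} - \cpx{n}_\st$ at most $r$, so it is constant except at no more than $\lfloor r \rfloor$ values of $k$. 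The one obstruction to bounding $K(n)$ is that $\dft(3^k n)$ may sit on a plateau for a long stretch before a final decrease, and it is these plateau lengths that must be controlled.

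To control them I would invoke the structure theory of numbers of bounded defect from \cite{theory}. The needed input is that for each real $r$ there is a \emph{finite} family of ``low-defect polynomials'' $f_1, \dots, f_t$---multivariate polynomials assembled from $1$ by additions and multiplications, each equipped with a cost $c(f_i)$ satisfying $\cpx{f_i(3^{a_1}, \dots, 3^{a_{d_i}})} \le c(f_i) + 3(a_1 + \cdots + a_{d_i})$---such that every $N$ with $\dft(N) < r$ has the form $N = 3^\ell f_i(3^{a_1}, \dots, 3^{a_{d_i}})$ for some $i$, some $\ell \ge 0$, and some exponents $a_j$, this representation witnessing a near-optimal expression for $N$. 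The first subtask is to make this family \emph{effective}: to exhibit an algorithm that, on input $r$, outputs such a list, say for the threshold $r + 1$. Granting that, one writes $n = 3^b n_0$ with $3 \nmid n_0$; for each $i$ one need consider only finitely many substitutions for which $f_i(3^{a_1}, \dots, 3^{a_{d_i}})$ is a power of $3$ times $n_0$, and each contributes an inequality of the shape $\cpx{3^k n} \le 3k + C_i$ valid for all large $k$, hence a candidate value for $\cpx{n}_\st$; by the classification, $\cpx{n}_\st$ is the least of these finitely many candidates. From $\cpx{n}_\st$ one then recovers $k^\ast$ as the first $k$ at which the directly computed $\cpx{3^k n} - 3k$ equals it.

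The main obstacle is precisely this effectivity: upgrading the pure finiteness of the low-defect classification in \cite{theory} to an algorithm that produces the polynomials $f_i$ from a given $r$, together with the bookkeeping certifying that no efficient representation of $3^k n$, for $k$ beyond the range one has examined directly, can fall outside the computed family. Making precise and proving this stability of the classification under repeated multiplication by $3$---and extracting from it the explicit numerical bound that closes the argument---is where essentially all of the work resides.
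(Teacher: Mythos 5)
Your outline follows essentially the same route as the paper---reduce everything to an effective version of the bounded-defect classification plus a finite check over substitutions of powers of $3$---but the two places where you locate ``the work'' are exactly the paper's technical content, and one of them you pass over without justification. First, the effectivity of the classification: the paper supplies this by turning the constructive proofs of \cite{paperwo} and \cite{theory} into Algorithms~\ref{startalg}--\ref{mainalg} (the base case $B_\alpha$ via Theorem~\ref{finite}, the inductive building-up of covering sets, and truncation to obtain good coverings). You correctly identify this as the main subtask, but as written your argument is a reduction of Theorem~\ref{frontpagethm} to an effective form of Theorem~\ref{mainthm}; that effective form is the substance of the paper, not something that can simply be ``granted.''

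Second, and this is the unflagged gap: the claim that one need consider only finitely many substitutions for which $f_i(3^{a_1},\ldots,3^{a_{d_i}})$ is a power of $3$ times $n_0$ is not obvious and is not argued. A priori the exponent $\ell$ in $f_i(3^{a_1},\ldots,3^{a_{d_i}})=3^{\ell}n_0$ could be unbounded, and with it the $a_j$, so the search space is infinite unless one proves a bound on $v_3\bigl(f(3^{a_1},\ldots,3^{a_r})\bigr)$ that is uniform in the exponents. This is precisely Lemma~\ref{maxv3} of the paper (the bound $\sum(\lfloor\log_3 a_{i_1,\ldots,i_r}\rfloor+1)-1$ over the nonzero coefficients, proved by induction on partial sums), and the paper explicitly notes that without it one would na\"ively have to check infinitely many $\ell$. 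Your remaining steps---each admissible substitution yields a candidate $C$ with $\cpx{3^k n}\le 3k+C$ for all large $k$; the least candidate equals $\cpx{n}_\st$ because the leader $3^{K(n)}n$ is efficiently $3$-represented by the family; and $K(n)$ is then the least $k$ with $\cpx{3^k n}-3k=\cpx{n}_\st$, computable by ordinary dynamic programming---are sound, and differ from the paper only cosmetically (Algorithm~\ref{stabalg} raises the defect threshold in steps of $\alpha$ with early stopping, and reads $K(n)$ and $\cpx{n}_\st$ off the set of hits by minimizing $\ell-3k$ and then $k$, rather than recomputing complexities directly).
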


It's worth observing here that, strictly speaking, all three parts of this
theorem are equivalent.  If one has an algorithm for computing $K(n)$, then one
may check whether $n$ is stable by checking whether $K(n)=0$, and one may
compute $\cpx{n}_\st$ by computing $\cpx{3^{K(n)} n}$ by the usual methods and
observing that
\[ \cpx{n}_\st = \cpx{3^{K(n)} n}-3K(n).\]
Similarly, if one has an algorithm for computing $\cpx{n}_\st$, one may compute
whether $n$ is stable by checking if $\cpx{n}_\st = \cpx{n}$.  Finally, if one
has an algorithm for telling if $n$ is stable, one may determine $K(n)$ by
simply applying this algorithm to $n$, $3n$, $9n$, \ldots, until it returns a
positive result, which must eventually occur.  Such methods for converting
between $K(n)$ and $\cpx{n}_\st$ may be quite slow, however.  Fortunately, the
algorithm described here (Algorithm~\ref{stabalg}) will yield both $K(n)$ and
$\cpx{n}_\st$ at once, averting such issues; and if one has $K(n)$, checking
whether $n$ is stable is a one-step process.

\subsection{Applications}

An obvious question about $\cpx{n}$ is that of the complexity of powers,
generalizing what was said about powers of $3$ above.  Certainly for $k\ge 1$ it
is true that
\[ \cpx{n^k} \le k \cpx{n}, \]
and as noted earlier in the case $n=3$ we have equality.  However other values
of $n$ have a more complicated behavior.  For instance, powers of $5$ do not
work nicely, as $\cpx{5^6}=29< 30= 6\cdot\cpx{5}$. The behavior of powers of $2$
remains unknown; it has previously been verified \cite{data2} that
\[
\cpx{2^k} = k \cpx{2} = 2k  ~~\mbox{for} ~~ 1\le k \le 39.
\] 
One may combine the known fact that $\cpx{3^k}=3k$ for $k\ge 1$, and the hope
that $\cpx{2^k}=2k$ for $k\ge 1$, into the following conjecture:

\begin{conj}
For $k,\ell\ge 0$ and not both equal to $0$,
\[ \cpx{2^k 3^\ell}=2k+3\ell. \]
\end{conj}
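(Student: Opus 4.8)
The upper bound $\cpx{2^k 3^\ell}\le 2k+3\ell$ is immediate, so the entire content is the matching lower bound. The plan is to first recast the two-parameter identity as a statement about powers of $2$ alone, and then to attack it by induction, with the real difficulty concentrated in a single ``addition'' case. For the recasting: since $\cpx{n}_\st\le\cpx{n}$ always, with equality exactly when $n$ is stable, and since $\cpx{2^k}_\st\le\cpx{2^k}\le 2k$, the single equality $\cpx{2^k}_\st=2k$ forces $\cpx{2^k}=2k$ together with the stability of $2^k$, hence $\cpx{2^k 3^\ell}=\cpx{2^k}_\st+3\ell=2k+3\ell$ for every $\ell\ge 0$; combined with the elementary fact $\cpx{3^\ell}=3\ell$ for $\ell\ge 1$, this is exactly the conjecture. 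So it suffices to prove, for every $k\ge 1$,
\[ \cpx{2^k}_\st=2k,\qquad\text{equivalently}\qquad \dft_\st(2^k)=k\bigl(2-3\log_3 2\bigr), \]
of which only the lower bound ``$\ge$'' needs work; and $2^k$ being stable, this in turn amounts to $\dft(2^k)=k(2-3\log_3 2)$, a single-number statement per $k$. I would phrase the argument in terms of the defect, since the set of (stable) defects is well-ordered and the structural apparatus of \cite{theory} and \cite{paperwo} is built around it.

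Establishing $\cpx{2^k}_\st=2k$ still requires controlling $\cpx{2^k 3^\ell}$ for all $\ell$ (as $\cpx{2^k}_\st=\lim_\ell(\cpx{2^k 3^\ell}-3\ell)$), which I would attempt by strong induction on the value $n=2^k 3^\ell$, ranging over $2$-$3$-smooth $n>1$. Take an optimal expression for $n$ and examine its top operation. If it is a multiplication, the two factors are proper $2$-$3$-smooth divisors of $n$, strictly smaller, so the inductive hypothesis gives that their complexities sum to $2k+3\ell$. Thus the bound can fail only if every optimal expression for $n$ is, at the top, an addition $n=a+b$ with $a\le b$ and $\cpx a+\cpx b<2k+3\ell$; here $a$ and $b$ need not be smooth, so the inductive hypothesis says nothing about them, and this is the crux. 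Specializing first to $\ell=0$: from $\cpx a+\cpx b\ge 3\log_3(ab)$ and $a+b=2^k$, hence $b\ge 2^{k-1}$, one gets $ab<3^{2k/3}$ and so $a<2^{(\frac{2}{3}\log_2 3-1)k+1}$ -- the smaller addend is forced into the thin range $a\lesssim 2^{0.057k}$. Since $b=2^k-a$ then differs from $2^k$ by an amount exponentially small relative to $2^k$, we get $\cpx b\ge 3\log_3 b>3k\log_3 2-o(1)$, whence $\cpx a<k(2-3\log_3 2)+o(1)$: the offending addend must be simultaneously small and of complexity essentially equal to its logarithmic lower bound $3\log_3 a$.

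The heart of the matter is to exclude this \emph{uniformly in $k$} -- equivalently, to bound $\dft(2^k-a)$ from below by $k(2-3\log_3 2)-\cpx a$, up to a vanishing error, for every admissible small $a$ (and similarly $\dft(2^k 3^\ell-a)$ in the general case). By the structure theorems of \cite{theory}, a number $a$ whose complexity is within $O(1)$ of $3\log_3 a$ must, below any fixed threshold, be obtainable by substituting powers of $3$ into one of finitely many low-defect polynomials; so for each such polynomial $f$ one is reduced to bounding $\dft\bigl(2^k-f(3^{e_1},\dots,3^{e_r})\bigr)$ below by a constant independent of $k$. I expect this to be the main obstacle, and to require genuine Diophantine input: a lower bound, via Baker's theorem on linear forms in logarithms, for how closely $2^k$ can be approximated by numbers of small complexity -- products of powers of $2$ and $3$, and values of those finitely many low-defect polynomials -- fed back to show that $2^k-a$ is neither small nor itself of that structured type, so that the bound $\cpx{2^k-a}\ge 3\log_3(2^k-a)$ becomes sharp enough to close the induction. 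The purely combinatorial machinery available now yields only the verification for $k$ up to an explicit threshold (here $k\le\NUM$); promoting it to all $k$ is precisely the step where a new ingredient -- most plausibly the Baker-type estimate just described, together with a careful accounting of how the low-defect structure interacts with the base-$2$ versus base-$3$ digit expansions of the numbers involved -- is needed.
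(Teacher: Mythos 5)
The statement you are proving is a \emph{conjecture} in the paper, not a theorem: the paper offers no proof of it, and its own contribution (Theorem~\ref{frontpage2comput}) is only the computational verification that $\cpx{2^k3^\ell}=2k+3\ell$ for $k\le\NUM$, obtained by running Algorithm~\ref{pow2alg} to show $K(2^{\NUM})=0$ and $\cpx{2^{\NUM}}_\st=\TWONUM$. Your opening reduction is sound and matches the paper's framing: $\cpx{2^k}_\st=2k$ does force $\cpx{2^k}=2k$ and the stability of $2^k$, hence the full two-parameter identity, and your induction over $2$-$3$-smooth numbers with the difficulty isolated in the top-level addition case is essentially the hand analysis of \cite{paper1}. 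But the heart of the argument --- a lower bound on $\dft(2^k3^\ell-a)$, uniform in $k$ and $\ell$, for the admissible small addends $a$ --- is not supplied; you explicitly defer it to a hoped-for Baker-type estimate. Since that is precisely the open part of the problem, the proposal is a research sketch rather than a proof, and it does not establish the statement (nor could any argument be checked against a paper proof, since none exists).

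Two technical points in the sketch are also overstated. First, in the addition case the constraint is only $\dft(a)<k(2-3\log_3 2)-3\log_3 a+o(1)$ (for $\ell=0$), so the offending addend need not have defect $O(1)$: when $a$ is much smaller than the extreme size $\approx 2^{0.057k}$, its allowed defect grows linearly in $k$. Consequently the appeal to the structure theorems of \cite{theory} --- ``finitely many low-defect polynomials below any fixed threshold'' --- does not apply with a threshold independent of $k$; the family of polynomials you must control grows with $k$ (by Proposition~\ref{polycpxbd}, even their degrees grow), which is exactly where the uniformity breaks down. Second, even for a single low-defect polynomial $f$ of degree $r\ge 1$, bounding $\dft\bigl(2^k-f(3^{e_1},\dots,3^{e_r})\bigr)$ from below is not a standard consequence of linear forms in logarithms, since $f(3^{e_1},\dots,3^{e_r})$ is a multi-term sum rather than an $S$-unit; some genuinely new input would be needed there as well. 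So the gap is not a fixable detail but the entire open content of the conjecture.
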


Such a conjecture, if true, is quite far from being proven; after all, it would
require that $\cpx{2^k}=2k$ for all $k\ge 1$, which would in turn imply that
\[ \limsup_{n\to\infty} \frac{\cpx{n}}{\log n}\ge \frac{2}{\log 2}; \]
at present, it is not even known that this limit is any greater than
$\frac{3}{\log 3}$, i.e., that $\cpx{n}\nsim3\log_3 n$.  Indeed, some have
suggested that $\cpx{n}$ may indeed just be asymptotic to $3\log_3 n$; see
\cite{Guy}.

Nonetheless, in this paper we provide some more evidence for this conjecture, by
proving:

\begin{thm}
\label{frontpage2comput}
For $k\le \NUM$ and arbitrary $\ell$, so long as $k$ and $\ell$ are not both
zero,
\[ \cpx{2^k 3^\ell} = 2k+3\ell.\]
\end{thm}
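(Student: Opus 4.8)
The plan is to reduce the verification of $\cpx{2^k 3^\ell}=2k+3\ell$ for fixed $k\le\NUM$ and \emph{all} $\ell\ge 0$ to a finite computation, using the stability machinery. First observe that the inequality $\cpx{2^k 3^\ell}\le 2k+3\ell$ is immediate from $\cpx{2}\le 2$, $\cpx{3}\le 3$ (for $\ell\ge 1$), so only the lower bound needs work. The key point is that $2k+3\ell = \cpx{2^k 3^\ell}$ holding for all $\ell$ is exactly the statement that $2^k$ is stable with $\cpx{2^k}_\st = 2k$: indeed, if $2^k$ is stable then $\cpx{2^k 3^\ell} = \cpx{2^k} + 3\ell$ for all $\ell$, and combined with the (separately checked) fact $\cpx{2^k}=2k$ for $1\le k\le\NUM$ this gives the claim; conversely the claim for all $\ell$ forces stability. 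So the theorem is equivalent to: for each $k$ with $1\le k\le\NUM$, the number $2^k$ is stable and $\cpx{2^k}=2k$.

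The second step is to invoke the algorithm of Theorem~\ref{frontpagethm} (Algorithm~\ref{stabalg}). For each $k$ in the range, I would run the stabilization algorithm on $n=2^k$: this produces $K(2^k)$ and $\cpx{2^k}_\st$ simultaneously. One then checks that $K(2^k)=0$, i.e.\ that $2^k$ is already stable, and that $\cpx{2^k}_\st = 2k$. Since stability of $2^k$ together with $\cpx{2^k}_\st=2k$ yields $\cpx{2^k 3^\ell}=2k+3\ell$ for every $\ell\ge 0$, and the case $k=0$, $\ell\ge 1$ is just $\cpx{3^\ell}=3\ell$ (already known), this completes the argument once the finitely many computations (one per $k\le\NUM$) terminate with the expected outputs. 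The case $\ell=0$, $1\le k\le\NUM$ is the statement $\cpx{2^k}=2k$, which is subsumed.

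The one subtlety, and the step I expect to be the real content, is \emph{efficiency}: naively computing $\cpx{2^k}$ by dynamic programming costs time polynomial in $2^k$, which is infeasible already for modest $k$, and the existence proof in \cite{paper1} gives no bound on how far one must look. This is precisely where the defect-based analysis of \cite{theory} and the present algorithm earn their keep — the algorithm works with the defect $\dft(n)=\cpx{n}-3\log_3 n$ and with finite ``defect sets'' rather than with $n$ directly, so that the work needed to certify stability of $2^k$ grows polynomially in $k$ rather than in $2^k$. Thus the hard part is not the logical structure of the reduction (which is short) but confirming that the algorithm as implemented in the accompanying Haskell code actually runs to completion on all inputs $2^k$, $k\le\NUM$, within practical resources; the reported value $\NUM$ is exactly how far this turned out to be feasible. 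The remaining routine point is simply to record that the algorithm's correctness (Theorem~\ref{frontpagethm}) guarantees the outputs $K(2^k)=0$ and $\cpx{2^k}_\st=2k$ are correct, not merely heuristic.
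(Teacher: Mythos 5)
Your reduction is sound and uses the same machinery as the paper, but the paper organizes the computation differently. You propose running the stabilization algorithm once for each $k$ with $1\le k\le\NUM$, verifying $K(2^k)=0$ and $\cpx{2^k}_\st=2k$ separately for every exponent. The paper instead runs Algorithm~\ref{pow2alg} \emph{only} for the single top case $k=\NUM$, obtaining $K(2^{\NUM})=0$ and $\cpx{2^{\NUM}}_\st=\TWONUM$, and then deduces all smaller $k$ at once by a subadditivity descent: if one had $\cpx{2^k 3^\ell}<2k+3\ell$ for some $k\le\NUM$, then writing $2^{\NUM}3^\ell=2^{\NUM-k}\,(2^k 3^\ell)$ and using $\cpx{ab}\le\cpx{a}+\cpx{b}$ together with $\cpx{2^{\NUM-k}}\le 2(\NUM-k)$ would give $\cpx{2^{\NUM}3^\ell}<\TWONUM+3\ell$, contradicting the computed result. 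Both arguments are correct and yield the same conclusion (including stability of every $2^k$, $k\le\NUM$); what the paper's route buys is that only the single most expensive run is needed, replaced by a two-line inequality, whereas your route costs roughly twice the total computation (the paper reports the run time roughly doubles with each increment of $k$, so the sum over $k\le\NUM$ is about twice the last term) but is conceptually more direct. One side remark in your write-up is inaccurate: the work to certify stability of $2^k$ does not grow polynomially in $k$; empirically it grows like $O(2^k)$, i.e., roughly linearly in $n=2^k$, which is still a dramatic improvement over the $O(n^{1.231})$ general-purpose algorithm but is not polynomial in $k$. This does not affect the logic of your proof, only the efficiency discussion.
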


This extends the results of \cite{data2} regarding numbers of the form $2^k
3^\ell$, as well as the results of \cite{paper1}, which showed this for $k\le
21$ and arbitrary $\ell$.  We prove this not by careful hand analysis, as was
done in \cite{paper1},
but by demonstrating, based on the methods of \cite{theory}, a new algorithm
(Algorithm~\ref{pow2alg}) for computing $\cpx{2^k}$.  Not only does it runs much
faster than existing algorithms, but it also works, as discussed above, by
determining $\cpx{2^k}_\st$ and $K(2^k)$, thus telling us
whether or not, for the given $k$, $\cpx{2^k 3^\ell}=2k+3\ell$ holds for all
$\ell\ge 0$.

The algorithms here can be used for more purposes as well; see
Theorem~\ref{smallunstab} for a further application of them.

\section{Summary of internals and further discussion}
\label{intro2}

\subsection{The defect, low-defect polynomials, and truncation}
\label{introdetail}

Let us now turn our attention to the inner workings of these algorithms, which
are based on the methods in \cite{theory}.
Proving the statement $\cpx{n}=k$ has two parts; showing that $\cpx{n}\le k$,
and showing that $\cpx{n}\ge k$.  The former is, comparatively, the easy part,
as it consists of just finding an expression for $n$ that uses at most $k$ ones;
the latter requires ruling out shorter expressions.  The simplest method for
this is simply exhaustive search, which, as has been mentioned, takes time
$\Theta(n^2)$, or time $O(n^{1.24625})$ once some possibilities have been
eliminated from the addition case.

In this paper, we take a different approach to lower bounding the quantity
$\cpx{n}$, one used earlier in the paper \cite{paper1}; however, we make a
number of improvements to the method of \cite{paper1} that both turn this method
into an actual algorithm, and frequently allow it to run in a reasonable time.
The method is based on considering the \emph{defect} of $n$:

\begin{defn}
The \emph{defect} of $n$, denoted $\dft(n)$ is defined by
\[ \dft(n) := \cpx{n} - 3\log_3 n. \]
\end{defn}

Let us further define:

\begin{defn}
For a real number $s\ge0$, the set $A_s$ is the set of all natural numbers with
defect less than $s$.
\end{defn}

The papers \cite{paperwo,theory,paper1} provided a method of, for any choice of
$\alpha\in(0,1)$, recursively building up descriptions of the sets $A_\alpha,
A_{2\alpha}, A_{3\alpha}, \ldots$; then, if for some $n$ and $k$ we can use this
to demonstrate that $n\notin A_{k\alpha}$, then we have determined a lower bound
on $\cpx{n}$.
More precisely, they showed
that for any $s\ge0$, there is a finite set $\sT_s$ of multilinear polynomials,
of a particular form called \emph{low-defect polynomials}, such that $\dft(n)<s$
if and only if $n$ can be written as $f(3^{k_1},\ldots,3^{k_r})3^{k_{r+1}}$ for
some $f\in \sT_s$ and some $k_1,\ldots,k_{r+1}\ge 0$.  In this paper, we take
this method and show how the polynomials can be produced by an actual algorithm,
and how further useful information can be computed once one has these
polynomials.

In brief, the algorithm works as follows: First, we choose a step size
$\alpha\in(0,1)$.
We start with a set of low-defect polynomials representing $A_\alpha$, and apply
the method of \cite{paperwo} to build up sets representing $A_{2\alpha},
A_{3\alpha}, \ldots$; at each step, we use ``truncation'' method of
\cite{theory} to ensure we are
representing the set $A_{i\alpha}$ exactly and not including extraneous
elements.  Then we check whether or not $n\in A_{i\alpha}$; if it is not, we
continue on to $A_{(i+1)\alpha}$.  If it is, then we have a representation
$n=f(3^{k_1},\ldots,3^{k_r})3^{k_{r+1}}$, and this gives us an upper bound on
$\cpx{n}$; indeed, we can find a shortest
representation for $n$ in this way, and so it gives us $\cpx{n}$ exactly.

This is, strictly speaking, a little different than what was described above, in
that it does not involve directly getting a lower bound on $\cpx{n}$ from the
fact that $n\notin A_{i\alpha}$.  However, this can be used too, so long as we
know in advance an upper bound on $\cpx{n}$.  For instance, this is quite useful
when $n=2^k$ (for $k\ge 1$), as then we know that $\cpx{n}\le 2k$, and hence
that $\dft(n)\le k\dft(2)$.  So we can use the method of the above paragraph,
but stop early, once we have covered defects up to $k\dft(2)-1$.  If we get a
hit within that time, then we have found a shortest representation for $n=2^k$.
Conversely, if $n$ is not detected, then we know that we must have
\[ \dft(2^k)>k\dft(2)-1, \]
and hence that
\[ \cpx{2^k}>2k-1,\]
i.e., $\cpx{2^k}=2k$, thus verifying that the obvious representation is the best
possible.  Again, though we have illustrated it here with powers of $2$, this
method can be used whenever we know in advance an upper bound on $\cpx{n}$; see
Appendix~\ref{impnotes}.

Now, so far we've discussed using these methods to compute $\cpx{n}$, but we
can go further and use them to prove Theorem~\ref{frontpagethm}, i.e., use them
to compute $K(n)$ and $\cpx{n}_\st$.  In this case, at each step, instead of
checking whether there is some $f\in \sT_{i\alpha}$ such that
$n=f(3^{k_1},\ldots,3^{k_r})3^{k_{r+1}}$, we check whether is some $f\in
\sT_{i\alpha}$ and some $\ell$ such that
\[ 3^\ell n = f(3^{k_1},\ldots,3^{k_r})3^{k_{r+1}}. \]
It is not immediately obvious that this is possible, since na\"ively we would
need to check infinitely many $\ell$, but Lemma~\ref{maxv3} allows us to do
this while checking only finitely many $\ell$.  Once we have such a detection,
we can use the value of $\ell$ to determine $K(n)$, and the representation of
$3^\ell n$ obtained this way to determine $\cpx{3^{K(n)} n}$ and hence
$\cpx{n}_\st$.  In addition, if we know in advance an upper bound on $\cpx{n}$,
we can use the same trick as above to sometimes cut the computation short and
conclude not only that $\cpx{n}=k$ but also that $n$ is stable.

\subsection{Comparison to addition chains}
\label{fksec}

It is worth discussing some work analogous to this paper in the study of
addition chains.  An \emph{addition chain} for $n$ is defined to be a sequence
$(a_0,a_1,\ldots,a_r)$ such that $a_0=1$, $a_r=n$, and, for any $1\le k\le r$,
there exist $0\le i, j<k$ such that $a_k = a_i + a_j$; the number $r$ is called
the length of the addition chain.  The shortest length among addition chains for
$n$, called the \emph{addition chain length} of $n$, is denoted $\acl(n)$.
Addition chains were introduced in 1894 by H.~Dellac \cite{Dellac} and
reintroduced in 1937 by A.~Scholz \cite{aufgaben}; extensive surveys on the
topic can be found in Knuth \cite[Section 4.6.3]{TAOCP2} and Subbarao
\cite{subreview}.

The notion of addition chain length has obvious similarities to that of integer
complexity; each is a measure of the resources required to build up the number
$n$ starting from $1$.  Both allow the use of addition, but integer complexity
supplements this by allowing the use of multiplication, while addition chain
length supplements this by allowing the reuse of any number at no additional
cost once it has been constructed.  Furthermore, both measures are approximately
logarithmic; the function $\acl(n)$ satisfies
\[ \log_2 n \le \acl(n) \le 2\log_2 n. \]
 
A difference worth noting is that $\acl(n)$ is actually known to be asymptotic
to $\log_2 n$, as was proved by Brauer \cite{Brauer}, but the function $\cpx{n}$
is not known to be asymptotic to $3\log_3 n$; the value of the quantity
$\limsup_{n\to\infty} \frac{\cpx{n}}{\log n}$ remains unknown.  As mentioned
above, Guy \cite{Guy} has asked whether $\cpx{2^k}=2k$ for $k\ge 1$; if true, it
would make this quantity at least $\frac{2}{\log 2}$.  The Experimental
Mathematics group at the University of Latvia
\cite{data2} has checked that this is true for $k\le 39$.

Another difference worth noting is that unlike integer complexity, there is no
known way to compute addition chain length via dynamic programming.
Specifically, to compute integer complexity this way, one may use the fact that
for any $n>1$,

\begin{displaymath}
\cpx{n}=\min_{\substack{a,b<n\in \mathbb{N} \\ a+b=n\ \mathrm{or}\ ab=n}}
	(\cpx{a}+\cpx{b}).
\end{displaymath}

By contrast, addition chain length seems to be harder to compute.  Suppose we
have a shortest addition chain $(a_0,\ldots,a_{r-1},a_r)$ for $n$; one might
hope that $(a_0,\ldots,a_{r-1})$ is a shortest addition chain for $a_{r-1}$, but
this need not be the case.  An example is provided by the addition chain
$(1,2,3,4,7)$; this is a shortest addition chain for $7$, but $(1,2,3,4)$ is not
a shortest addition chain for $4$, as $(1,2,4)$ is shorter.   Moreover, there is
no way to assign to each natural number $n$ a shortest addition chain
$(a_0,\ldots,a_r)$ for $n$ such that $(a_0,\ldots,a_{r-1})$ is the addition
chain assigned to $a_{r-1}$ \cite{TAOCP2}. This can be an obstacle both to
computing addition chain length and proving statements about addition chains.

Nevertheless, the algorithms described here seem to have a partial analogue for addition
chains in the work of A.~Flammenkamp \cite{fk}.  We might define the
\emph{addition chain defect} of $n$ by
\[ \dft^{\acl}(n):= \acl(n)-\log_2 n; \]
a closely related quantity, the number of \emph{small steps} of $n$, was
introduced by Knuth \cite{TAOCP2}.  The number of small steps of $n$ is defined
by \[ s(n) := \acl(n) - \lfloor \log_2 n \rfloor;\] clearly, this is related to
$\dft^{\acl}(n)$ by $s(n)=\lceil\dft^{\acl}(n)\rceil$.

In 1991, A.~Flammenkamp determined a method for producing descriptions of all
numbers $n$ with $s(n)\le k$ for a given integer $k$, and produced such
descriptions for $k\le 3$ \cite{fk}.  Note that for $k$ an integer, $s(n)\le k$
if and only if $\dft^{\acl}(n)\le k$, so this is the same as determining all $n$
with $\dft^{\acl}(n)\le k$, restricted to the case where $k$ is an integer.
Part of what Flammenkamp proved may be summarized as the following:

\begin{thm}[Flammenkamp]
For any integer $k\ge 0$, there exists a finite set $\sS_k$ of polynomials (in
any number of variables, with nonnegative integer coefficients) such that for
any $n$, one has $s(n)\le k$ if and only if one can write
$n=f(2^{m_1},\ldots,2^{m_r})2^{m_{r+1}}$ for some $f\in \sS_k$ and some integers
$m_1,\ldots,m_{r+1}\ge0$.  Moreover, $\sS_k$ can be effectively computed.
\end{thm}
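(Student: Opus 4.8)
The plan is to prove both directions of the equivalence simultaneously, by induction on $k$, by regarding an addition chain for $n$ as a long string of doublings interrupted by at most $k$ ``small'' steps. Each maximal run of doublings multiplies the running value by a power of $2$, so it is exactly these runs that will furnish the independent variables $2^{m_i}$ and the trailing factor $2^{m_{r+1}}$, while the small steps are the additions recorded by the polynomial $f$. I would therefore define $\sS_k$ to consist of the polynomials obtainable from the constant polynomial $1$ by a recursive construction mirroring this picture --- ``multiply by a fresh variable'' (a new doubling run, which is free) and ``add a constant'' or ``add a suitably compatible previously-built polynomial'' (a small step, which costs one) --- subject to a budget of $k$ small steps; this is the addition-chain analogue of the low-defect polynomials of \cite{theory}, with $3$ replaced by $2$ and $\dft$ replaced by $\dft^{\acl}$. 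The base case $k=0$ is immediate: $s(n)=0$ precisely when the shortest chain for $n$ consists entirely of doublings, i.e.\ when $n$ is a power of $2$, so $\sS_0=\{1\}$ suffices.

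For the forward implication, given $n$ with $s(n)\le k$ and $k\ge 1$, I would take a shortest chain $(a_0,\ldots,a_r)$ for $n$, locate its last small step, at position $j$, and write $a_j=a_p+a_q$ with $p,q<j$, so that $n=2^{r-j}a_j$ with $2^{r-j}$ playing the role of $2^{m_{r+1}}$ and $(a_0,\ldots,a_{j-1})$ a chain for $a_{j-1}$ that contains $a_p$ and $a_q$ and has exactly $k-1$ small steps in total. Here is the one genuinely delicate point, and it is precisely the phenomenon flagged by the cautionary remark that prefixes of shortest chains need not be shortest: truncating a chain at one of its own elements $a_i$ always yields a chain for $a_i$ whose number of small steps equals its length minus $\lfloor\log_2 a_i\rfloor$ --- this identity holds for \emph{every} chain, optimal or not, since the doublings always number exactly $\lfloor\log_2 a_i\rfloor$ --- so the truncation at $a_p$ has at most $k-1$ small steps, whence $s(a_p)\le k-1$, and likewise $s(a_q)\le k-1$. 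More to the point, I would apply the induction hypothesis to the \emph{whole} prefix chain so as to express $a_p$ and $a_q$ as polynomials over a \emph{common} set of at most $O(k)$ variables built with $k-1$ additions in total, rather than treating the two summands separately (which would be wasteful and would destroy finiteness); adding them uses the $k$-th addition, and multiplying by $2^{r-j}$ gives the required form with a polynomial in $\sS_k$.

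For the reverse implication I would show that every $n$ of the form $f(2^{m_1},\ldots,2^{m_r})2^{m_{r+1}}$ with $f\in\sS_k$ has $s(n)\le k$, by building an explicit chain that follows the recursive construction of $f$: doubling runs realize the variables and the trailing factor at no cost in small steps, and each ``$+$'' in the construction is realized by a single small step \emph{provided} the summands were built inside a common chain so that their doublings are shared. This interleaving requirement is exactly why the ``$+$'' in the construction cannot be between two arbitrary members of $\sS_{k-1}$ but must be suitably restricted, and getting this restriction exactly right --- so that the equivalence really is an ``if and only if'' --- is the technical heart of the argument. For finiteness and effectivity one then observes that the polynomials produced are multilinear (the length of each doubling run is absorbed into the \emph{choice} of the exponent $m_i$, not into the polynomial), that the construction uses at most $k$ additions and so introduces $O(k)$ variables and $O(k)$ monomials, and --- after normalizing by pulling powers of $2$ out of the coefficients and constant terms into fresh variables or into $2^{m_{r+1}}$ --- that the remaining coefficients and constants are bounded in terms of $k$; hence there are only finitely many such $f$, and since the recursive construction is explicit one computes $\sS_k$ from $\sS_{k-1}$ by a finite search, discarding redundant polynomials at the end.

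I expect two main obstacles. First, organizing the induction so that the two summands of a peeled-off small step are handled within a shared variable set and a shared budget of $k-1$ rather than $2(k-1)$ --- this is what keeps $\sS_k$ finite, and it is where the non-optimality of chain prefixes must be confronted, via the ``length minus $\lfloor\log_2\rfloor$'' identity above. Second, pinning down the exact class of polynomials: one must show both that every chain with at most $k$ small steps is captured (no $n$ is missed) and that no extraneous $n$ slips in (every setting of the exponents genuinely yields $s(n)\le k$), and the latter is what forces the ``$+$'' operation to be carefully constrained and forces the normalization that bounds the coefficients. Everything else --- the base case, the bookkeeping of doubling runs, and the effectivity of the search --- should be routine once these two points are in hand.
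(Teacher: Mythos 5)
You should first be aware that the paper does not prove this statement at all: it is quoted as a summary of results from Flammenkamp's thesis \cite{fk}, so there is no proof in the paper to compare yours against, only the surrounding remarks. Judged on its own, your proposal is a plan rather than a proof, and the points you defer are exactly where the content lies. Concretely: (a) your structural picture of a chain witnessing $s(n)\le k$ as ``a long string of doublings interrupted by at most $k$ small steps'' is wrong as stated. A non-doubling step need not be a small step: in the chain $1,2,3,5$ the step $3\to 5$ is an addition, yet $\lfloor\log_2\rfloor$ increases, so it is not small. Hence a chain of length $\lfloor\log_2 n\rfloor+s(n)$ may contain many more than $k$ additions, and your later assertion that the construction ``uses at most $k$ additions and so introduces $O(k)$ variables and $O(k)$ monomials'' does not follow; you would need to prove either that such chains can be normalized so every addition is small, or that the number of additions is bounded by a function of $k$, or else do the bookkeeping with $\lfloor\log_2\rfloor$-increasing additions included. (b) Your forward step peels off the last small step $a_j=a_p+a_q$ and then wants to ``apply the induction hypothesis to the whole prefix'' so as to express $a_p$ and $a_q$ over a common variable set with a shared budget of $k-1$. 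But the statement you are inducting on concerns representability of a single number $n$ with $s(n)\le k-1$; it says nothing about simultaneously representing two designated elements of one chain over shared variables. The induction hypothesis must be strengthened (to a statement about chains, or about families of elements produced by one chain), and that strengthened statement is precisely where your ``suitably compatible previously-built polynomial'' restriction has to be defined. You acknowledge this is the technical heart and the soundness (``no extraneous $n$ slips in'') direction rests entirely on it, but you do not supply it.

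(c) Your finiteness argument leans on the claim that the polynomials produced are multilinear with coefficients and constants bounded in terms of $k$ after normalization. The paper explicitly warns that the polynomials in Flammenkamp's method ``cannot always be taken to be multilinear,'' so at the very least this claim requires proof rather than assertion. The danger comes from the feature that distinguishes addition chains from integer complexity (and which the paper highlights as the obstruction to dynamic programming): reuse of earlier chain elements. When an earlier, variable-dependent element is added back in, the lengths of intervening doubling runs are tied to the runs occurring inside that element; to get a multilinear polynomial you must relax these ties into independent variables, and you must then prove that every evaluation of the relaxed polynomial still admits a chain with at most $k$ \emph{small} steps (not merely at most $k$ additions), which again runs into the issue in (a). Similarly, the claimed coefficient bound comes from bounding the number of unfolded terms, which requires the same control on the number of additions. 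None of this is fatal to the overall strategy --- a recursion of the shape ``multiply by a fresh variable, add an earlier partial value'' is indeed the natural analogue of the low-defect machinery --- but as written the proposal has a false structural premise, an induction hypothesis too weak to run the induction, and an unproved soundness direction, so it does not yet constitute a proof.
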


Unfortunately, the polynomials used in Flammenkamp's method are more complicated
than those produced by the algorithms here; for instance, they cannot always be
taken to be multilinear.  Nonetheless, there is a distinct similarity.

Flammenkamp did not consider questions of stability (which in this case would
result from repeated multiplication by $2$ rather than by $3$; see \cite{adcwo}
for more on this), but it may be possible to use his methods to compute
stability information about addition chains, just as the algorithms here may be
used to compute stability information about integer complexity.  The problem of
extending Flammenkamp's methods to allow for non-integer cutoffs seems more
difficult.

\subsection{Discussion: Algorithms}
\label{discussion1}

Many of the algorithms described here are parametric, in that they require a
choice of a ``step size'' $\alpha\in(0,1)$.  In the attached implementation,
$\alpha$ is always taken to be $\dft(2)=0.107\ldots$, and some precomputations
have been made based on this choice.  See Appendix~\ref{impnotes} for more on
this.  Below, when we discuss the computational complexity of the algorithms
given here, we are assuming a fixed choice of $\alpha$.  It is possible that the
value of $\alpha$ affects the time complexity of these algorithms.  One could
also consider what happens when $\alpha$ is considered as an input to the
algorithm, so that one cannot do pre-computations based on the choice of
$\alpha$.  (In this case we should really restrict the form of $\alpha$ so that
the question makes sense, for instance to $\alpha=p-q\log_3 n$, with $n$ a
natural number and $p,q\in\Q$.)  We will avoid these issues for now, and
assume for the rest of this section that $\alpha=\dft(2)$ unless otherwise
specified.  Two of the algorithms here optionally allow a second input, a known
upper bound $L$ on $\cpx{n}$.  If no bound is input, we may think of this as
$L=\infty$.  We will assume here the simplest case, where no bound $L$ is input,
or equivalently where we always pick $L=\infty$.

We will not actually conduct here a formal analysis of the time
complexity of Algorithm~\ref{stabalg} or Algorithm~\ref{pow2alg}.  Our assertion
that Algorithm~\ref{pow2alg} is much faster than existing methods for computing
$\cpx{2^k}$ is an empirical one.  The speedup is a dramatic one, though; for
instance, the Experimental Mathematics group's computation of $\cpx{n}$ for
$n\le 10^{12}$ required about
3 weeks on a supercomputer, although they used the $\Theta(n^2)$-time algorithm
rather than any of the improvements \cite{letter}; whereas computing
$\cpx{2^\NUM}$ via Algorithm~\ref{pow2alg} required only around 20 hours on the
author's laptop computer.

Empirically, increasing $k$ by one seems to
approximately double the run time of Algorithm~\ref{pow2alg}.  This suggests
that perhaps Algorithm~\ref{pow2alg} runs in time $O(2^k)$, which would be
better than the $O(2^{1.231k})$ bound coming from applying existing methods
\cite{anv} to compute the complexity of $\cpx{2^k}$.

For Algorithm~\ref{stabalg}, the run time seems to be determined more by the
size of $\dft_\st(n):=\cpx{n}_\st-3\log_3 n$ (or by the size of $\dft(n)$, in
the case of Algorithm~\ref{genalg}), rather than by the size of $n$, since it
seems that most of the work consists of building the
sets of low-defect polynomials, rather than checking if $n$ is represented.  For
this reason, computing $\cpx{n}$ via Algorithm~\ref{genalg} is frequently much
slower than using existing methods, even though it is much faster for powers of
$2$.  Note that strictly speaking, $\dft(n)$ can be bounded in terms of $n$,
since
\[ \dft(n) \le 3\log_2 n - 3\log_3 n, \]
but as mentioned earlier, this may be a substantial overestimate.  So it is
worth asking the question:

\begin{quest}
What is the time complexity of Algorithm~\ref{pow2alg}, for computing $K(2^k)$
and $\cpx{2^k}_\st$?  What is the time complexity of of Algorithm~\ref{stabalg}
(with $L=\infty)$, for
computing $K(n)$ and $\cpx{n}_\st$?  What is the time complexity of
Algorithm~\ref{genalg} (with
$L=\infty$), for computing the values of $\cpx{3^k n}$ for a given $n$ and all
$k\ge0$?  What if $L$ may be finite?  How do these depend on the parameter
$\alpha$?  What if $\alpha$ is an input?
\end{quest}

\subsection{Discussion: Stability and computation}
\label{discussion2}

Although we have now given a means to compute
$K(n)$, we have not provided any explicit upper bound on it.  The same is true
for the quantity
\[ \drop(n):=\cpx{n}-\cpx{n}_\st,\]
which is another way of measuring ``how unstable'' the number $n$ is, and which
is also now computable due to Theorem~\ref{frontpagethm}.  We also do not have
any
reliable method of generating unstable numbers with which to demonstrate lower
bounds.

Empirically, large instabilities -- measured either by $K(n)$ or by
$\drop(n)$ -- seem to be rare.  This statement is not based on
running Algorithm~\ref{stabalg} on many numbers to determine their stability, as
that is quite slow in general, but rather on simply computing $\cpx{n}$ for
$n\le 3^{15}$ and then checking $\cpx{n}$, $\cpx{3n}$, $\cpx{9n}$,\ldots, and
guessing that $n$ is stable if no instability is detected before the data runs
out, a method that can only ever put lower bounds on $K(n)$ and $\drop(n)$,
never upper bounds.  Still, numbers that are detectably unstable at all seem to
be somewhat rare, although they still seem to make up a positive fraction of all
natural numbers; namely, around $3\%$.  Numbers that are more than merely
unstable -- having $K(n)\ge 2$ or $\drop(n)\ge 2$ -- are rarer.

The largest lower bounds on $K(n)$ or $\drop(n)$ for a given $n$ encountered
based on these computations are $n=4721323$, which, as mentioned earlier, has
$\cpx{3n}<\cpx{n}$ and thus $\drop(n)\ge 4$; and $17$ numbers, the smallest of
which is $n=3643$, which have $\cpx{3^5 n}<\cpx{3^4 n}+3$ and thus $K(n)\ge 5$.
Finding $n$ where both $K(n)$ and $\drop(n)$ are decently large is hard; for
instance, these computations did not turn up any $n$ for which it could be seen
that both $K(n)\ge 3$ and $\drop(n)\ge 3$.  (See Table~\ref{droptable} for
more.)  It's not even clear whether $K(n)$ or $\drop(n)$ can get arbitrarily
large, or are bounded by some finite constant, although there's no clear reason
why the latter would be so.  Still, this is worth pointing out as a question:

\begin{quest}
What is the natural density of the set of unstable numbers?  What is an explicit
upper bound on $K(n)$, or on $\drop(n)$?  Can $K(n)$ and $\drop(n)$ get
arbitrarily large, or are they bounded?
\end{quest}

Further questions along these lines suggest themselves, but these questions seem
difficult enough, so we will stop this line of inquiry there for now.

\begin{table}
\caption{Numbers that seem to have unusual drop patterns.  Here, the ``drop
pattern'' of $n$ is the list of values $\dft(3^k n)-\dft(3^{k+1} n)$, or
equivalently $\cpx{3^k n}-\cpx{3^{k+1} n}+3$, up until
the point where this is always zero.  This table is empirical, based on a
computation of $\cpx{n}$ for $n\le 3^{15}$; it's possible these numbers have
later drops further on.  Numbers which are divisible by $3$ are not listed.}
\begin{tabular}{l|l}
\label{droptable}
Drop pattern & Numbers with this pattern \\
\hline
$4$ &  $4721323$ \\
$1,2$ & $1081079$ \\
$2,1$ & $203999$, $1328219$ \\
$1,0,0,1$ & $153071, 169199$ \\
\end{tabular}
\end{table}

Strictly speaking, it is possible to prove
Theorem~\ref{frontpagethm} using algorithms based purely on the methods of
\cite{paperwo}, without actually using the ``truncation'' method of the paper
\cite{theory}.  Of course, one cannot simply remove the truncation step from the
algorithms here and get correct answers; other checks are necessary to
compensate.  See Appendix~\ref{impnotes} for a brief discussion of this.
However, while this is sufficient to prove Theorem~\ref{frontpagethm}, the
algorithms obtained this way are simply too slow to be of any use.  And
without the method of truncation, one cannot
write Algorithm~\ref{mainalg},
without which proving Theorem~\ref{smallunstab} would be quite difficult.
We will demonstrate further applications of
the Theorem~\ref{smallunstab} and the method of truncation in future papers
\cite{seq3, stab}.

We can also ask about the computational complexity of computing these functions
in general, rather than just the specific algorithms here.  As noted above, the
best known algorithm for computing $\cpx{n}$ takes time $O(n^{1.24625})$.  It
is also known \cite{Arias} that the problem ``Given $n$ and $k$ in binary, is
$\cpx{n}\le k$?'' is in the class $NP$, because the size of a witness is $O(\log
n)$.  (This problem is not known to be $NP$-complete.)  However, it's not clear
whether the problem ``Given $n$ and $k$ in binary, is $\cpx{n}_\st \le k$?'' is
in the class $NP$, because there's no obvious bound on the size of a witness.
It is quite possible that it could be proven to be in $NP$, however, if an
explicit upper bound could be obtained on $K(n)$.

We can also consider the problem of computing the defect ordering, i.e., ``Given
$n_1$ and $n_2$ in binary, is $\dft(n_1)\le \dft(n_2)$?''; the significance of
this problem is that the set of all defects is in fact a well-ordered set
\cite{paperwo} with order type $\omega^\omega$.  This problem lies in
$\Delta_2^P$ in the polynomial hierarchy \cite{paperwo}.  The paper
\cite{paperwo} also defined the \emph{stable defect} of $n$:
\begin{defn}
\label{dftstearlydef}
The \emph{stable defect} of $n$, denoted $\dft_\st(n)$, is
\[ \dft_\st(n) := \cpx{n}_\st - 3\log_3 n. \]
\end{defn}
(We will review the stable defect and its properties in
Section~\ref{subsecdft}.)  Thus we get the problem of, ``Given $n_1$ and $n_2$
in binary, is $\dft_\st(n_1) \le \dft_\st(n_2)$?''  The
image of $\dft_\st$ is also well-ordered with order type $\omega^\omega$, but
until now it was not known that this problem is computable.  But
Theorem~\ref{frontpagethm} shows that it is, and so we can ask about its
complexity.  Again, due to a lack of bounds on $K(n)$, it's not clear that this
lies in $\Delta_2^P$.

We can also ask about the complexity of computing $K(n)$, or $\drop(n)$ (which,
conceivably, could be easier than $\cpx{n}$ or $\cpx{n}_\st$, though this seems
unlikely), or, perhaps most importantly, of computing a set $\sT_s$ for a given
$s\ge0$.  Note that in this last case, it need not be the set $\sT_s$ found by
Algorithm~\ref{mainalg} here; we just want any set satisfying the required
properties -- a \emph{good covering} of $B_s$, as we call it here
(see Definition~\ref{goodcover}).  Of course, we must make a restriction on the
input for this last question, as one cannot actually take arbitrary real numbers
as input; perhaps it would be appropriate to restrict to $s$ of the form
\[ s \in \{ p-q\log_3 n : p,q\in \Q, n\in\N \}, \]
which seems like a large enough set of real numbers to cover all the numbers we
care about here.

We summarize:

\begin{quest}
What is the complexity of computing $\cpx{n}$?  What is the complexity of
computing $\cpx{n}_\st$?  What is the complexity of computing the difference
$\drop(n)$?  What is the complexity of computing the defect ordering
$\dft(n_1)\le \dft(n_2)$?  What is the complexity of computing the stable defect
ordering $\dft_\st(n_1)\le\dft_\st(n_2)$?  What is the complexity of computing
the stabilization length $K(n)$?
\end{quest}

\begin{quest}
Given $s=p-q\log_3 n$, with $p,q\in \Q$ and $n\in\N$, what is the complexity of
computing a good covering $\sT_s$ of $B_s$?
\end{quest}

\section{The defect, stability, and low-defect polynomials}
\label{review}

In this section we will review the results of \cite{paperwo} and \cite{theory}
regarding the defect $\dft(n)$, the stable complexity $\cpx{n}_\st$, and
low-defect polynomials.

\subsection{The defect and stability}
\label{subsecdft}

First, some basic facts about the defect:

\begin{thm}
\label{oldprops}
We have:
\begin{enumerate}
\item For all $n$, $\dft(n)\ge 0$.
\item For $k\ge 0$, $\dft(3^k n)\le \dft(n)$, with equality if and only if
$\cpx{3^k n}=3k+\cpx{n}$.  The difference $\dft(n)-\dft(3^k n)$ is a nonnegative
integer.
\item A number $n$ is stable if and only if for any $k\ge 0$, $\dft(3^k
n)=\dft(n)$.
\item If the difference $\dft(n)-\dft(m)$ is rational, then $n=m3^k$ for some
integer $k$ (and so $\dft(n)-\dft(m)\in\mathbb{Z}$).
\item Given any $n$, there exists $k$ such that $3^k n$ is stable.
\item For a given defect $\alpha$, the set $\{m: \dft(m)=\alpha \}$ has either
the form $\{n3^k : 0\le k\le L\}$ for some $n$ and $L$, or the form $\{n3^k :
0\le k\}$ for some $n$.  This latter occurs if and only if $\alpha$ is the
smallest defect among $\dft(3^k n)$ for $k\in \mathbb{Z}$.
\item $\dft(1)=1$, and for $k\ge 1$, $\dft(3^k)=0$.  No other integers occur as
$\dft(n)$ for any $n$.
\item If $\dft(n)=\dft(m)$ and $n$ is stable, then so is $m$.
\end{enumerate}
\end{thm}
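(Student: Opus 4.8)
The plan is to reduce everything to the characterization of stability in part~(3): $m$ is stable if and only if $\dft(3^k m)=\dft(m)$ for all $k\ge 0$. Since $\dft(n)=\dft(m)$, the difference $\dft(n)-\dft(m)=0$ is rational, so part~(4) tells us that $n=m3^{j}$ for some integer $j\in\Z$. I would then split into the two cases $j<0$ and $j\ge 0$.

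If $j<0$, write $m=n3^{-j}$ with $-j\ge 1$. For any $k\ge 0$ we have $3^k m=3^{k-j}n$ with exponent $k-j\ge 1\ge 0$, so stability of $n$ (via part~(3)) gives $\dft(3^k m)=\dft(3^{k-j}n)=\dft(n)=\dft(m)$. Since this holds for every $k\ge 0$, part~(3) shows $m$ is stable.

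If $j\ge 0$, write $n=m3^{j}$. For $k\ge j$ we again have $3^k m=3^{k-j}n$ with $k-j\ge 0$, so $\dft(3^k m)=\dft(n)=\dft(m)$ by stability of $n$. For the remaining exponents $0\le k\le j$, one cannot appeal directly to stability of $n$, so instead I would sandwich using the monotonicity in part~(2): applying part~(2) to $3^k m$ with exponent $j-k\ge 0$ gives $\dft(3^j m)\le \dft(3^k m)$, and since $\dft(3^j m)=\dft(n)=\dft(m)$ this yields $\dft(m)\le \dft(3^k m)$; combined with the reverse inequality $\dft(3^k m)\le\dft(m)$ from part~(2) again, we get $\dft(3^k m)=\dft(m)$. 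Putting the two ranges together, $\dft(3^k m)=\dft(m)$ for all $k\ge 0$, so $m$ is stable by part~(3).

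I do not anticipate a genuine obstacle; the only delicate point is handling the intermediate exponents $0\le k\le j$ in the case $j\ge 0$, where stability of $n$ is not directly available and one must instead use the monotonicity (and integrality) of the defect along multiplication by powers of $3$ from part~(2). As an alternative, cleaner but less elementary route, one could invoke part~(6): stability of $n$ makes $\{3^k n : k\ge 0\}$ an infinite subset of $\{\ell : \dft(\ell)=\dft(n)\}$, forcing that set to be of the infinite form $\{n'3^k : k\ge 0\}$; then $m$ lies in it, say $m=n'3^{i}$ with $i\ge 0$, and every $3^k m=n'3^{i+k}$ lies in it too, so $\dft(3^k m)=\dft(m)$ for all $k\ge 0$ and $m$ is stable by part~(3).
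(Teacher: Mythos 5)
Your argument for part (8) is correct as far as it goes: from $\dft(n)=\dft(m)$ and part (4) you get $n=m3^{j}$ with $j\in\Z$, and both the case analysis (handling the exponents $0\le k\le j$ via the monotonicity in part (2)) and the alternative route through part (6) do give $\dft(3^k m)=\dft(m)$ for all $k\ge 0$, whence $m$ is stable by part (3). The genuine gap is that part (8) is the only clause you prove. The statement is the full eight-part theorem, and your proposal takes parts (1)--(7) as given and uses them as lemmas. Those parts carry essentially all of the content -- nonnegativity of the defect, the monotonicity and integrality of $\dft(n)-\dft(3^k n)$, the characterization of stability in (3), the rationality criterion (4), the existence of stabilization (5), the structure of equal-defect classes (6), and the classification of integer defects (7) -- and none of them can be obtained by the kind of internal reduction you use for (8); they require the defect and stability theory developed elsewhere. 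In the paper they are not reproved but explicitly imported: parts (1)--(7), excepting (3), are Theorem~2.1 of \cite{paperwo}, part (3) is Proposition~12 of \cite{paper1}, and part (8) is Proposition~3.1 of \cite{paperwo}. So, judged as a proof of the stated theorem, seven of its eight assertions are assumed rather than proved or cited.

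On the clause you do treat, your route is genuinely different from the paper's: the paper simply cites Proposition~3.1 of \cite{paperwo} for (8), whereas you deduce (8) formally from (2)--(4) (or from (3) together with (6)). Granting the earlier parts, that deduction is complete and correct, and it is a nice observation that (8) is a consequence of the rest of the theorem; but it does not discharge the remaining seven parts, which is where the real work (or at least the citation) lies.
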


\begin{proof}
Parts (1) through (7), excepting part (3), are just Theorem~2.1 from
\cite{paperwo}.  Part (3) is Proposition~12 from \cite{paper1}, and part (8) is
Proposition~3.1 from \cite{paperwo}.
\end{proof}

The paper \cite{paperwo} also defined the notion of a \emph{stable defect}:

\begin{defn}
We define a \emph{stable defect} to be the defect of a stable number.
\end{defn}

Because of part (9) of Theorem~\ref{oldprops}, this definition makes sense; a
stable defect $\alpha$ is not just one that is the defect of some stable number,
but one for which any $n$ with $\dft(n)=\alpha$ is stable.  Stable defects can
also be characterized by the following proposition from \cite{paperwo}:

\begin{prop}
\label{modz1}
A defect $\alpha$ is stable if and only if it is the smallest defect
$\beta$ such that $\beta\equiv\alpha\pmod{1}$.
\end{prop}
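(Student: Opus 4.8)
The plan is to extract this directly from Theorem~\ref{oldprops}, mainly parts (2), (3), (4), (5), and (8). The first step is to describe which defects lie in a fixed residue class modulo $1$: if $\alpha=\dft(m)$ is a defect and $\beta=\dft(p)$ is any defect with $\beta\equiv\alpha\pmod{1}$, then $\dft(p)-\dft(m)\in\Z$ is rational, so part~(4) forces $p=3^k m$ for some $k\in\Z$ (with $3^k m\in\N$). Conversely, each such $\dft(3^k m)$ differs from $\alpha$ by an integer by part~(2) (applied to $m$ if $k\ge0$, and to $3^k m$ if $k<0$). So the defects $\equiv\alpha\pmod{1}$ are exactly the numbers $\dft(3^k m)$, $k\in\Z$.

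For the ``only if'' direction, assume $\alpha=\dft(m)$ with $m$ stable, and let $\beta\equiv\alpha\pmod{1}$ be a defect, so $\beta=\dft(3^k m)$ for some $k\in\Z$ by the previous paragraph. If $k\ge0$, then part~(3) (stability of $m$) gives $\dft(3^k m)=\dft(m)$, so $\beta=\alpha$. If $k<0$, then $m=3^{-k}(3^k m)$, and part~(2) gives $\dft(m)\le\dft(3^k m)$, i.e.\ $\alpha\le\beta$. In either case $\alpha\le\beta$, so $\alpha$ is the least defect in its residue class mod $1$.

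For the ``if'' direction, assume $\alpha=\dft(m)$ is least in its residue class mod $1$. By part~(5) choose $k\ge0$ with $3^k m$ stable and set $\gamma=\dft(3^k m)$; by part~(2) we have $\gamma\le\alpha$ and $\alpha-\gamma\in\Z$, so $\gamma$ is a defect congruent to $\alpha$ mod $1$, and minimality of $\alpha$ forces $\alpha\le\gamma$, hence $\gamma=\alpha$. Now $3^k m$ is stable and $\dft(3^k m)=\dft(m)$, so part~(8) shows $m$ is itself stable; thus $\alpha$ is the defect of a stable number, i.e.\ a stable defect. I do not expect a genuine obstacle here: the only points needing care are tracking the direction of the inequality in part~(2) according to the sign of the exponent, and invoking part~(8) to push stability from $3^k m$ back down to $m$ rather than reproving it. An alternative proof would instead use part~(6), which identifies ``$\alpha$ least in its class'' with the set $\{n:\dft(n)=\alpha\}$ being infinite, from which part~(3) reads off stability; but this still needs part~(4) to identify the residue class, so the route above seems more self-contained.
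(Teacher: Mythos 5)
Your proof is correct. Note that the paper itself gives no argument for this proposition: it is imported verbatim from \cite{paperwo}, so there is no internal proof to compare against. What you have done is reconstruct it from the toolkit the paper does state, namely Theorem~\ref{oldprops}: part~(4) to identify the defects congruent to $\alpha \pmod 1$ with the values $\dft(3^k m)$, $k\in\Z$; parts~(2) and~(3) to get $\alpha\le\beta$ in the ``only if'' direction (splitting on the sign of $k$ is exactly the right care point); and parts~(5) and~(2) plus minimality to get $\dft(3^k m)=\dft(m)$ in the ``if'' direction. The one step you could trim: once you know $3^k m$ is stable with $\dft(3^k m)=\alpha$, the defect $\alpha$ is already the defect of a stable number, which is all the definition of ``stable defect'' requires, so the appeal to part~(8) to push stability back down to $m$ is harmless but unnecessary. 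Your alternative route via part~(6) would also work, and either version is a legitimate self-contained substitute for the external citation.
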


We can also define the \emph{stable defect} of a given number, which we denote
$\dft_\st(n)$.  (We actually already defined this in
Definition~\ref{dftstearlydef}, but let us disregard that for now and give a
different definition; we will see momentarily that they are equivalent.)

\begin{defn}
For a positive integer $n$, define the \emph{stable defect} of $n$, denoted
$\dft_\st(n)$, to be $\dft(3^k n)$ for any $k$ such that $3^k n$ is stable.
(This is well-defined as if $3^k n$ and $3^\ell n$ are stable, then $k\ge \ell$
implies $\dft(3^k n)=\dft(3^\ell n)$, and so does $\ell\ge k$.)
\end{defn}

Note that the statement ``$\alpha$ is a stable defect'', which earlier we were
thinking of as ``$\alpha=\dft(n)$ for some stable $n$'', can also be read as the
equivalent statement ``$\alpha=\dft_\st(n)$ for some $n$''.

We then have the following facts relating the notions of $\cpx{n}$, $\dft(n)$,
$\cpx{n}_\st$, and $\dft_\st(n)$:

\begin{prop}
\label{stoldprops}
We have:
\begin{enumerate}
\item $\dft_\st(n)= \min_{k\ge 0} \dft(3^k n)$
\item $\dft_\st(n)$ is the smallest defect $\alpha$ such that
$\alpha\equiv \dft(n) \pmod{1}$.
\item $\cpx{n}_\st = \min_{k\ge 0} (\cpx{3^k n}-3k)$
\item $\dft_\st(n)=\cpx{n}_\st-3\log_3 n$
\item $\dft_\st(n) \le \dft(n)$, with equality if and only if $n$ is stable.
\item $\cpx{n}_\st \le \cpx{n}$, with equality if and only if $n$ is stable.
\end{enumerate}
\end{prop}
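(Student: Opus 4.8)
The plan is to build everything on the monotonicity of two sequences attached to $n$. By Theorem~\ref{oldprops}(2), applied with $3^k n$ in place of $n$, the sequence $\dft(3^k n)$ is non-increasing in $k$, its successive differences are nonnegative integers, and it is bounded below by $0$; hence it is eventually constant. By Theorem~\ref{oldprops}(5) there is a $k$ with $3^k n$ stable, and from that point on the sequence is constant with value $\dft_\st(n)$ by the definition of the stable defect. A non-increasing sequence attains its minimum at its eventual value, so $\min_{k\ge 0}\dft(3^k n)=\dft_\st(n)$, giving part (1). The same reasoning applies verbatim to $\cpx{3^k n}-3k$, which equals $\dft(3^k n)+3\log_3 n$ and is therefore also non-increasing, integer-valued, and bounded below; its eventual (hence minimal) value is $\cpx{n}_\st$ by the definition of the stable complexity, giving part (3).

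Part (4) I would get by evaluating both sides at any $k$ for which $3^k n$ is stable: then $\dft_\st(n)=\dft(3^k n)=\cpx{3^k n}-3\log_3(3^k n)=\cpx{3^k n}-3k-3\log_3 n=\cpx{n}_\st-3\log_3 n$, which also records that the two definitions of $\dft_\st(n)$ agree. Part (2) then follows by combining part (1) with Theorem~\ref{oldprops}(2) — every $\dft(3^k n)$ differs from $\dft(n)$ by an integer, so $\dft_\st(n)\equiv\dft(n)\pmod 1$ — and with Proposition~\ref{modz1}: $\dft_\st(n)$ is the defect of the stable number $3^k n$, hence a stable defect, hence the least defect congruent to itself mod $1$, i.e. the least defect congruent to $\dft(n)$ mod $1$.

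Finally, parts (5) and (6). Part (5) is immediate from part (1): $\dft_\st(n)=\min_{k\ge 0}\dft(3^k n)\le\dft(3^0 n)=\dft(n)$, and equality means $\dft(n)=\min_k\dft(3^k n)$, so $\dft(n)\le\dft(3^k n)$ for all $k$; combined with the reverse inequality from Theorem~\ref{oldprops}(2) this gives $\dft(3^k n)=\dft(n)$ for all $k\ge 0$, which by Theorem~\ref{oldprops}(3) is exactly the statement that $n$ is stable. Part (6) follows by adding $3\log_3 n$ throughout and using part (4): $\cpx{n}_\st=\dft_\st(n)+3\log_3 n\le\dft(n)+3\log_3 n=\cpx{n}$, with equality iff $\dft_\st(n)=\dft(n)$, i.e. iff $n$ is stable.

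The only point needing care — rather than a genuine obstacle — is the justification that these sequences are eventually constant with the claimed limiting values; this is where Theorem~\ref{oldprops}(5) and the definitions of stable complexity and stable defect do the real work. Everything else is a matter of chaining the definitions with results already established, so I anticipate no substantive difficulty beyond this bookkeeping.
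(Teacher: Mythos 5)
Your proof is correct, but it takes a different route from the paper: the paper disposes of this proposition by citation alone (it is quoted as Propositions~3.5, 3.7, and 3.8 of \cite{paperwo}), whereas you rederive all six parts from ingredients stated earlier in the paper. Your derivation is sound: applying Theorem~\ref{oldprops}(2) with $3^j n$ in place of $n$ shows the sequence $\dft(3^k n)$ is non-increasing, Theorem~\ref{oldprops}(5) and the definitions of $\cpx{n}_\st$ and $\dft_\st(n)$ identify its eventual value, and the rest follows by the translation $\cpx{3^k n}-3k=\dft(3^k n)+3\log_3 n$, by Proposition~\ref{modz1} for the congruence characterization in part (2), and by Theorem~\ref{oldprops}(3) for the equality cases in parts (5) and (6) (note that (3) being an equivalence is what gives you both directions there). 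What your approach buys is a self-contained verification that the two definitions of $\dft_\st$ given in the paper (Definition~\ref{dftstearlydef} versus the later definition via a stabilizing power of $3$) agree, and that the proposition genuinely needs nothing beyond Theorem~\ref{oldprops} and Proposition~\ref{modz1}; what the citation buys is brevity, since the external source already contains exactly these statements.
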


\begin{proof}
These are just Propositions~3.5, 3.7, and 3.8 from \cite{paperwo}.
\end{proof}

\subsection{Low-defect expressions, polynomials, and pairs}
\label{bdsec}

As has been mentioned in Section~\ref{introdetail}, we are going to represent
the set $A_r$ by substituting in powers of $3$ into certain multilinear
polynomials we call \emph{low-defect polynomials}.  Low-defect polynomials come
from particular sorts of expressions we will call \emph{low-defect expressions}.
We will associate with each polynomial or expression a ``base complexity'' to
from a \emph{low-defect pair}.  In this section we will review the properties of
these polynomials and expressions.

First, their definition:

\begin{defns}
A \emph{low defect expression} is defined to be a an expression in positive
integer constants, $+$, $\cdot$, and some number of variables, constructed
according to the following rules:
\begin{enumerate}
\item Any positive integer constant by itself forms a low-defect expression.
\item Given two low-defect expressions using disjoint sets of variables, their
product is a low-defect expression.  If $E_1$ and $E_2$ are low-defect
expressions, we will use $E_1 \otimes E_2$ to denote the low-defect expression
obtained by first relabeling their variables to disjoint and then multiplying
them.
\item Given a low-defect expression $E$, a positive integer constant $c$, and a
variable $x$ not used in $E$, the expression $E\cdot x+c$ is a low-defect
expression.  (We can write $E\otimes x+c$ if we do not know in advance that $x$
is not used in $E$.)
\end{enumerate}

We also define an \emph{augmented low-defect expression} to be an expression of
the form $E\cdot x$, where $E$ is a low-defect expression and $x$ is a variable
not appearing in $E$.  If $E$ is a low-defect expression, we also use $\xpdd{E}$
to denote the low-defect $E\otimes x$.
\end{defns}

Note that we do not really care about what variables a low-defect expression is
in -- if we permute the variables of a low-defect polynomial or replace them
with others, we will regard the result as an equivalent low-defect expression.

We also define the complexity of a low-defect expression:

\begin{defns}
The \emph{complexity} of a low-defect expression $E$, denoted $\cpx{E}$, is the
sum of the complexities of all the constants used in $E$.  A \emph{low-defect
[expression] pair} is an ordered pair $(E,k)$ where $E$ is a low-defect
expression, and $k$ is a whole number with $k\ge \cpx{E}$.
\end{defns}

The reason for introducing the notion of a ``low-defect pair'' is that we may
not always know the complexity of a given low-defect expression; frequently, we
will only know an upper bound on it.  For more theoretical applications, one
does not always need to keep track of this, but since here we are concerned with
computation, we need to keep track.  One can, of course, always compute the
complexity of any low-defect expression one is given; but to do so may be
computationally expensive, and it is easier to simply keep track of an upper
bound.  (Indeed, for certain applications, one may actually want to keep track
of more detailed information, such as an upper bound on each constant
individually; see Appendix~\ref{impnotes} for more on this.)

One can then evaluate these expressions to get polynomials:

\begin{defns}
A \emph{low-defect polynomial} is a polynomial $f$ obtained by evaluating a
low-defect expression $E$.  If $(E,k)$ is a low-defect [expression] pair, we say
$(f,k)$ is a low-defect [polynomial] pair.  We use $\xpdd{f}$ to refer to the
polynomial obtained by evaluating $\xpdd{E}$, and call it an \emph{augmented
low-defect polynomial}.  For convenience, if $(f,k)$ is a low-defect pair, we
may say ``the degree of $(f,k)$'' to refer to the degree of $f$.
\end{defns}

Typically, for practical use, what we want is not either low-defect expressions
or low-defect polynomials.  Low-defect polynomials do not retain enough
information about how they were made.  For instance, in the algorithms below, we
will frequently want to substitute in values for the ``innermost'' variables in
the polynomial; it is shown in \cite{theory} that this is well-defined even if
multiple expressions can give rise to the same polynomial.  However, if all one
has is the polynomial rather than the expression which generated it, determining
which variables are innermost may require substantial computation.

On the other hand, low-defect expressions contain unneeded information; there
is little practical reason to distinguish between, e.g., $2(3x+1)$ and
$(3x+1)\cdot2$, or between $1\cdot(3x+1)$ and $3x+1$, or $2(2(3x+1))$ and
$4(3x+1)$.  A useful practical representation is what \cite{theory} called a
\emph{low-defect tree}:

\begin{defn}
Given a low-defect expression $E$, we define a corresponding \emph{low-defect
tree} $T$, which is a rooted tree where both edges and vertices are labeled with
positive integers.  We build this tree as follows:
\begin{enumerate}
\item If $E$ is a constant $n$, $T$ consists of a single vertex labeled with
$n$.
\item If $E=E'\cdot x + c$, with $T'$ the tree for $E$, $T$ consists of $T'$
with a new root attached to the root of $T'$.  The new root is labeled with a
$1$, and the new edge is labeled with $c$.
\item If $E=E_1 \cdot E_2$, with $T_1$ and $T_2$ the trees for $E_1$ and $E_2$
respectively, we construct $E$ by ``merging'' the roots of $E_1$ and $E_2$ --
that is to say, we remove the roots of $E_1$ and $E_2$ and add a new root, with
edges to all the vertices adjacent to either of the old roots; the new edge
labels are equal to the old edge labels.  The label of the new root is equal
to the product of the labels of the old roots.
\end{enumerate}
\end{defn}

See Figure~\ref{treeexamp} for an example illustrating this construction.

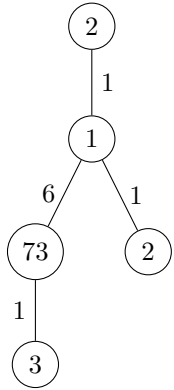
\begin{figure}
\caption{Low-defect tree for the expression $2((73(3x_1+1)x_2+6)(2x_3+1)x_4+1)$.}
\label{treeexamp}
\begin{center}
\begin{tikzpicture}
\node[circle,draw] {$2$}
	child {node[circle,draw] {$1$}
		child { node[circle,draw] {$73$}
			child {node[circle,draw] {$3$}
				edge from parent node[left] {$1$}}
			edge from parent node[left] {$6$}}
		child {node[circle,draw] {$2$}
			edge from parent node[right]{$1$}}
		edge from parent node[right]{$1$}};
\end{tikzpicture}
\end{center}
\end{figure}

This will still contain information that is unnecessary for our purposes -- for
instance, this representation still distinguishes between $4(2x+1)$ and
$2(4x+2)$ -- but it is on the whole a good medium between including too much and
including too little.  While the rest of the paper will discuss low-defect
expressions and low-defect polynomials, we assume these are being represented as
trees, for convenience.

\subsection{Properties of low-defect polynomials}

Having now discussed the definition and representation of low-defect expressions
and polynomials, let us now discuss their properties.

Note first that the degree of a low-defect polynomial is also equal to the
number of variables it uses; see Proposition~\ref{polystruct}.  We will often
refer to the ``degree'' of a low-defect pair $(f,C)$; this refers to the degree
of $f$.  Also note that augmented low-defect polynomials are never low-defect
polynomials; as we will see in a moment (Proposition~\ref{polystruct}),
low-defect polynomials always have nonzero constant term, whereas augmented
low-defect polynomials always have zero constant term.

Low-defect polynomials are multilinear polynomials; indeed, they are read-once
polynomials (in the sense of for instance \cite{ROF}), as low-defect expressions
are easily seen to be read-once expressions.

In \cite{paperwo} were proved the following propositions about low-defect
pairs:

\begin{prop}
\label{polystruct}
Suppose $f$ is a low-defect polynomial of degree $r$.  Then $f$ is a
polynomial in the variables $x_1,\ldots,x_r$, and it is a multilinear
polynomial, i.e., it has degree $1$ in each of its variables.  The coefficients
are non-negative integers.  The constant term is nonzero, and so is the
coefficient of $x_1\ldots x_r$, which we will call the \emph{leading
coefficient} of $f$.
\end{prop}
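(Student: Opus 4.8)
The plan is to prove Proposition~\ref{polystruct} by structural induction on the low-defect expression $E$ that evaluates to $f$, following the three-clause recursive definition of low-defect expressions. For the base case, if $E$ is a positive integer constant $n$, then $f = n$ is a degree-$0$ multilinear polynomial with nonzero (positive integer) constant term $n$; since there are no variables, the empty product $x_1 \cdots x_r$ is the constant $1$ and its "coefficient" is $n \neq 0$, so the leading-coefficient claim holds vacuously/trivially. This case is immediate.

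For the inductive step, I would handle the two recursive constructions separately. First consider $E = E_1 \cdot E_2$ where $E_1, E_2$ use disjoint variable sets; by the induction hypothesis each $f_i$ is multilinear in its own variables $x_1^{(i)}, \ldots, x_{r_i}^{(i)}$ with positive integer coefficients, nonzero constant term, and nonzero leading coefficient. Since the variable sets are disjoint, $f = f_1 f_2$ is multilinear in the union of the variables, has degree $r_1 + r_2$ equal to the number of variables, and its coefficients are products of sums of nonnegative integers hence nonnegative integers. The constant term of $f$ is the product of the two constant terms, nonzero since $\Z$ is an integral domain; and the coefficient of the full monomial $\prod x_j$ in $f$ is the product of the two leading coefficients, again nonzero. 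Second, consider $E = E' \cdot x + c$ with $x$ a fresh variable and $f'$ the polynomial for $E'$ of degree $r-1$; then $f = f' \cdot x + c$. Multilinearity in $x_1, \ldots, x_{r-1}$ is inherited, degree $1$ in the new variable $x$ is clear, and the number of variables is $(r-1)+1 = r$. The constant term of $f$ is $(\text{constant term of } f') \cdot 0 + c = c \neq 0$, and the coefficient of $x_1 \cdots x_{r-1} x$ in $f$ equals the leading coefficient of $f'$, which is nonzero by induction. Coefficients remain nonnegative integers since $c$ is a positive integer.

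The only genuine subtlety — and the step I would be most careful about — is the bookkeeping that the degree equals the number of variables used, and correspondingly that in the product case the two variable sets really are disjoint so that no variable gets squared (which would break multilinearity) and no cancellation of the top monomial occurs. This is exactly where clause~(2) of the definition of low-defect expressions (insisting on disjoint variable sets, with the $\otimes$ notation encoding a relabeling) does the work; I would invoke it explicitly. One should also note that "low-defect polynomial" is well-defined as a polynomial independent of the generating expression (cited to \cite{theory}), but the induction here is on expressions, so this causes no circularity: we prove the structural claim for every expression, and it descends to the polynomial. Everything else is routine ring arithmetic, so I would not belabor it.

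A brief remark: the same induction simultaneously establishes the parenthetical claims made just before the proposition in the text — that a low-defect polynomial always has nonzero constant term (hence is never an augmented low-defect polynomial, those having zero constant term from the trailing $\otimes x$) and is read-once/multilinear — so I would organize the proof to make all of these fall out of the single structural induction rather than repeating the argument.
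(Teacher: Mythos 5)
Your structural induction is correct and complete: all three clauses of the definition are handled properly, the disjointness of variable sets in the product case is rightly flagged as the point that preserves multilinearity and prevents cancellation, and the constant-term and leading-coefficient claims follow as you say. The paper itself gives no proof of Proposition~\ref{polystruct}, merely citing \cite{paperwo}, and your argument is precisely the standard induction on low-defect expressions that underlies the cited result, so there is nothing to correct or contrast.
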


\begin{prop}
\label{basicub}
If $(f,C)$ is a low-defect pair of degree $r$, then
\[\cpx{f(3^{n_1},\ldots,3^{n_r})}\le C+3(n_1+\ldots+n_r).\]
and
\[\cpx{\xpdd{f}(3^{n_1},\ldots,3^{n_{r+1}})}\le C+3(n_1+\ldots+n_{r+1}).\]
\end{prop}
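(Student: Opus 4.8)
The statement to prove is Proposition~\ref{basicub}: if $(f,C)$ is a low-defect pair of degree $r$, then $\cpx{f(3^{n_1},\ldots,3^{n_r})}\le C+3(n_1+\ldots+n_r)$, and similarly for $\xpdd{f}$ with an extra variable. The natural approach is structural induction on the low-defect expression $E$ that generates $f$, following the three formation rules in the definition of low-defect expressions. At each stage I would track the invariant that $\cpx{E}$ (or any upper bound $C\ge\cpx{E}$ attached to the pair) plus $3$ times the sum of the exponents substituted in bounds the complexity of the resulting integer. The base complexity $C$ is additive across the constants appearing in $E$, which is exactly what makes the induction go through, since $\cpx{\cdot}$ is subadditive under both $+$ and $\cdot$.

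First I would handle the base case: $E$ is a positive integer constant $n$, so $f=n$ has degree $0$, there are no variables, and the claim is just $\cpx{n}\le C$, which holds because $C\ge\cpx{E}=\cpx{n}$ by definition of a low-defect pair. Next, the product case $E=E_1\otimes E_2$: here $f(3^{\mathbf n}) = f_1(3^{\mathbf n^{(1)}}) f_2(3^{\mathbf n^{(2)}})$ where the variables (hence exponents) split disjointly between the two factors. By induction each factor has complexity at most $C_i + 3(\text{sum of its exponents})$, and since $\cpx{ab}\le\cpx a+\cpx b$, multiplying gives complexity at most $C_1+C_2+3(n_1+\cdots+n_r)$; and $C=C_1+C_2\ge\cpx{E_1}+\cpx{E_2}=\cpx{E}$ works as the base complexity. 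Then the case $E=E'\cdot x+c$: writing $x=3^{n_{r}}$ (the new variable), the integer is $f'(3^{\mathbf n'})\cdot 3^{n_r}+c$. By induction $\cpx{f'(3^{\mathbf n'})}\le C' + 3(n_1+\cdots+n_{r-1})$, multiplying by $3^{n_r}$ adds at most $3n_r$ (since $\cpx{3^{n_r}}\le 3n_r$ for $n_r\ge 1$, and is trivial for $n_r=0$), and adding $c$ adds at most $\cpx c$; so the total is at most $C' + \cpx{c} + 3(n_1+\cdots+n_r) \le C + 3(n_1+\cdots+n_r)$ since $C\ge\cpx{E}=\cpx{E'}+\cpx{c}$.

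For the augmented statement about $\xpdd f$: by definition $\xpdd f$ comes from $\xpdd E = E\otimes x$ for a fresh variable $x$, so $\xpdd f(3^{n_1},\ldots,3^{n_{r+1}}) = f(3^{n_1},\ldots,3^{n_r})\cdot 3^{n_{r+1}}$. Applying the already-proved bound on $f$ and then multiplying by $3^{n_{r+1}}$ — which costs at most $3n_{r+1}$ more ones — gives exactly $C + 3(n_1+\cdots+n_{r+1})$, as claimed. The only genuinely delicate point is the bookkeeping at the $n_i=0$ boundary (where $3^0=1$ contributes no multiplicative cost and the corresponding variable effectively disappears) and making sure that when I invoke $\cpx{3^{m}}\le 3m$ I am not secretly using $m\ge 1$; I expect this to be the main thing to get exactly right rather than a real obstacle. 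Everything else is a straightforward recursion on the expression tree, and since this proposition is quoted from \cite{paperwo}, I would also note that it is cited there as already established, so the above is essentially a recap of that argument.
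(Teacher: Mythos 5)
Your proposal is correct: the structural induction on the low-defect expression (constant base case, subadditivity of $\cpx{\cdot}$ under multiplication for the $\otimes$ case, and $\cpx{3^m}\le 3m$ with the $m=0$ boundary noted for the $E\cdot x+c$ and augmented cases) is the standard argument, and your handling of the base complexity $C\ge\cpx{E}$ is exactly what is needed. The paper itself gives no independent proof --- it simply cites Proposition~4.5 and Corollary~4.12 of \cite{paperwo} --- so your write-up is essentially a recap of the argument behind that citation rather than a divergence from the paper's approach.
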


\begin{proof}
This is a combination of Proposition~4.5 and Corollary~4.12 from \cite{paperwo}.
\end{proof}

Because of this, it makes sense to define:

\begin{defn}
Given a low-defect pair $(f,C)$ (say of degree $r$) and a number $N$, we will
say that $(f,C)$ \emph{efficiently $3$-represents} $N$ if there exist
nonnegative integers $n_1,\ldots,n_r$ such that
\[N=f(3^{n_1},\ldots,3^{n_r})\ \textrm{and}\ \cpx{N}=C+3(n_1+\ldots+n_r).\]
We will say $(\xpdd{f},C)$ efficiently
$3$-represents $N$ if there exist $n_1,\ldots,n_{r+1}$ such that
\[N=\xpdd{f}(3^{n_1},\ldots,3^{n_{r+1}})\ \textrm{and}\ 
\cpx{N}=C+3(n_1+\ldots+n_{r+1}).\]
More generally, we will also say $f$ $3$-represents $N$ if there exist
nonnegative integers $n_1,\ldots,n_r$ such that $N=f(3^{n_1},\ldots,3^{n_r})$.
and similarly with $\xpdd{f}$.
We will also use the same terminology regarding low-defect expressions.
\end{defn}

 Note that if $E$ is a low-defect expression and $(E,C)$ (or $(\xpdd{E},C)$)
efficiently $3$-represents some $N$, then $(E,\cpx{E})$ (respectively,
$(\xpdd{E},\cpx{E})$ efficiently $3$-represents $N$, which means that in order
for $(E,C)$ (or $(\xpdd{E},C)$ to $3$-represent anything efficiently at all, we
must have $C=\cpx{E}$.  And if $f$ is a low-defect polynomial and $(f,C)$ (or
$\xpdd{f},C)$ efficiently $3$-represents some $N$, then $C$ must be equal to the
smallest $\cpx{E}$ among any low-defect expression $E$ that evaluates to $f$
(which in \cite{paperwo} and \cite{theory} was denoted $\cpx{f}$).  But, again,
it is still worth using low-defect pairs rather than just low-defect polynomials
and expressions since we do not want to spend time computing the value
$\cpx{E}$.

For this reason it makes sense to use ``$E$ efficiently $3$-represents $N$'' to
mean ``some $(E,C)$ efficiently $3$-represents $N$'' or equivalently
``$(E,\cpx{E})$ efficiently $3$-reperesents $N$''.  Similarly with $\xpdd{E}$.

In keeping with the name, numbers $3$-represented by low-defect polynomials, or
their augmented versions, have bounded defect.  Let us make some definitions
first:

\begin{defn}
Given a low-defect pair $(f,C)$, we define $\dft(f,C)$, the defect of $(f,C)$,
to be $C-3\log_3 a$, where $a$ is the leading coefficient of $f$.
\end{defn}

\begin{defn}
Given a low-defect pair $(f,C)$ of degree $r$, we define
\[\dft_{f,C}(n_1,\ldots,n_r) =
C+3(n_1+\ldots+n_r)-3\log_3 f(3^{n_1},\ldots,3^{n_r}).\]
\end{defn}

Then we have:

\begin{prop}
\label{dftbd}
Let $(f,C)$ be a low-defect pair of degree $r$, and let $n_1,\ldots,n_{r+1}$ be
nonnegative integers.
\begin{enumerate}
\item We have
\[ \dft(\xpdd{f}(3^{n_1},\ldots,3^{n_{r+1}}))\le \dft_{f,C}(n_1,\ldots,n_r)\]
and the difference is an integer.
\item We have \[\dft_{f,C}(n_1,\ldots,n_r)\le\dft(f,C)\]
and if $r\ge 1$, this inequality is strict.
\item The function $\dft_{f,C}$ is strictly increasing in each variable, and
\[ \dft(f,C) = \sup_{k_1,\ldots,k_r} \dft_{f,C}(k_1,\ldots,k_r).\]
\end{enumerate}
\end{prop}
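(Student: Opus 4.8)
The plan is to unwind the definitions of $\dft$, $\dft(f,C)$, and $\dft_{f,C}$ and combine them with Propositions~\ref{polystruct} and~\ref{basicub}; the three parts are largely independent, so I would treat them in the order given. Throughout, note that since $f$ has nonnegative integer coefficients and nonzero constant term, $f(3^{n_1},\ldots,3^{n_r})$ is a positive integer, so every complexity and logarithm involved makes sense. For part (1), set $N := f(3^{n_1},\ldots,3^{n_r})$, so that $\xpdd{f}(3^{n_1},\ldots,3^{n_{r+1}}) = 3^{n_{r+1}} N$. Unwinding, $\dft(3^{n_{r+1}} N) = \cpx{3^{n_{r+1}} N} - 3n_{r+1} - 3\log_3 N$, and the second bound of Proposition~\ref{basicub} gives $\cpx{3^{n_{r+1}} N} \le C + 3(n_1 + \cdots + n_{r+1})$. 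Substituting shows $\dft(3^{n_{r+1}} N) \le C + 3(n_1 + \cdots + n_r) - 3\log_3 N = \dft_{f,C}(n_1,\ldots,n_r)$, and the difference is exactly $C + 3(n_1+\cdots+n_{r+1}) - \cpx{\xpdd{f}(3^{n_1},\ldots,3^{n_{r+1}})}$, which is an integer (one could also deduce integrality and nonnegativity of the gap from Theorem~\ref{oldprops}(2) applied to $\dft(N) - \dft(3^{n_{r+1}}N)$).

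For part (2), let $a$ be the leading coefficient of $f$. Then $\dft(f,C) - \dft_{f,C}(n_1,\ldots,n_r) = 3\log_3\frac{f(3^{n_1},\ldots,3^{n_r})}{a\,3^{n_1+\cdots+n_r}}$. By Proposition~\ref{polystruct}, $f$ is multilinear with nonnegative integer coefficients, the coefficient of its unique degree-$r$ monomial $x_1\cdots x_r$ is $a$, and its constant term is nonzero; evaluating at the positive values $x_i = 3^{n_i}$ therefore gives $f(3^{n_1},\ldots,3^{n_r}) \ge a\,3^{n_1+\cdots+n_r}$, and when $r\ge 1$ the nonzero constant term contributes an extra positive amount, so the inequality is strict. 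Taking $\log_3$ gives the claim; when $r=0$, $f = a$ and the two quantities coincide.

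For part (3), fix an index $j$ and the values of the other variables, and write $f = x_j g + h$ where $g,h$ are polynomials in the remaining variables; this is legitimate by multilinearity, $g\ne 0$ because the leading coefficient of $f$ is nonzero, and $h\ne 0$ because the constant term of $f$ is nonzero. Writing $G := g(3^{n_1},\ldots)$ and $H := h(3^{n_1},\ldots)$, both positive integers, replacing $n_j$ by a real parameter $t$ turns $\dft_{f,C}$ into $t\mapsto 3t - 3\log_3(3^t G + H)$ plus a constant independent of $t$, whose derivative is $3H/(3^t G + H) > 0$; hence $\dft_{f,C}$ is strictly increasing in each variable. Being coordinatewise increasing, $\sup_{k_1,\ldots,k_r}\dft_{f,C}(k_1,\ldots,k_r) = \lim_{m\to\infty}\dft_{f,C}(m,\ldots,m)$; and since every monomial of $f$ other than $a\,x_1\cdots x_r$ omits at least one variable, $f(3^m,\ldots,3^m)/(a\,3^{rm}) \to 1$ as $m\to\infty$, so this limit equals $C - 3\log_3 a = \dft(f,C)$ (the upper bound on the supremum also being immediate from part (2)).

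The only step that calls for a little care is the supremum computation in part (3): one must send all the $k_i$ to infinity simultaneously so that, for each proper subset $S\subsetneq\{1,\ldots,r\}$, the monomial of $f$ supported on $S$ becomes negligible compared with the leading monomial. Everything else is routine manipulation of the definitions of $\dft$, $\dft(f,C)$, and $\dft_{f,C}$ together with the cited propositions.
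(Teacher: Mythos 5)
Your proof is correct. Note, though, that the paper does not actually prove Proposition~\ref{dftbd} internally: its ``proof'' is a citation (Proposition~4.9 and Corollary~4.14 of \cite{paperwo} together with Proposition~2.14 of \cite{theory}), so what you have done is reconstruct the content of those cited results directly from Propositions~\ref{polystruct} and~\ref{basicub} and the definitions. Your route is sound at every step: part (1) is exactly the second bound of Proposition~\ref{basicub} rewritten in terms of defects, with the gap $C+3(n_1+\cdots+n_{r+1})-\cpx{\xpdd{f}(3^{n_1},\ldots,3^{n_{r+1}})}$ visibly an integer; part (2) reduces to $f(3^{n_1},\ldots,3^{n_r})\ge a\,3^{n_1+\cdots+n_r}$, strict for $r\ge1$ because of the nonzero constant term; and in part (3) the decomposition $f=x_jg+h$ (with $g,h\ne0$ by the nonzero leading coefficient and constant term) gives the positive derivative $3H/(3^tG+H)$, and the diagonal limit $f(3^m,\ldots,3^m)/(a\,3^{rm})\to1$ correctly identifies the supremum with $\dft(f,C)$, the monotonicity justifying the restriction to the diagonal. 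The only quibble is the parenthetical in part (1): applying Theorem~\ref{oldprops}(2) to $\dft(N)-\dft(3^{n_{r+1}}N)$ accounts for only part of the gap (one also needs integrality of $\dft_{f,C}(n_1,\ldots,n_r)-\dft(N)$, i.e.\ of $C+3(n_1+\cdots+n_r)-\cpx{N}$), but your primary argument already handles this, so nothing is missing. The benefit of your version is self-containment; the paper's citation simply defers to where these facts were first established.
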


\begin{proof}
This is a combination of Proposition~4.9 and Corollary~4.14 from \cite{paperwo}
along with Proposition~2.14 from \cite{theory}.
\end{proof}

Indeed, one can make even stronger statements than (3) above.  In \cite{theory},
a partial order is placed on the variables of a low-defect polynomial $f$,
where, for variables $x$ and $y$ in $f$, we say $x\preceq y$ if $x$ appears
``deeper'' in a low-defect expression $f$ for than $y$.  Formally,

\begin{defn}
\label{defnest}
Let $E$ be a low-defect expression.  Let $x$ and $y$ be variables appearing in
$E$.  We say that $x\preceq y$ under the nesting ordering for $E$ if $x$ appears
in the smallest low-defect subexpression of $E$ that contains $y$.  (If $f$ is a
low-defect polynomial, it can be shown that the nesting order is independent of
the low-defect expression used to generate it; see Proposition~3.18 from
\cite{theory}.)
\end{defn}

For instance, if $f=((((2x_1+1)x_2+1)(2x_3+1)x_4+1)x_5+1)(2x_6+1)$, one has
$x_1\prec x_2\prec x_4\prec x_5$ and $x_3\prec x_4\prec x_5$ but no other
relations.  It's then shown \cite[Proposition~4.6]{theory} that (3) above is
true even if only the minimal (i.e., innermost) variables are allowed to
approach infinity.  That is to say:

\begin{prop}
\label{minlimit}
Let $(f,C)$ be a low-defect pair of degree $r$.  Say $x_{i_j}$, for $1\le j\le
s$, are the minimal variables of $f$.  Then
\[ \lim_{k_{i_1},\ldots,k_{i_s}\to\infty}
	\dft_{f,C}(k_1,\ldots,k_r) = \dft(f,C)\]
(where the other $k_i$ remain fixed).
\end{prop}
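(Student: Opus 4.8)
The plan is to reduce the claim about the minimal variables to the already-established fact (Proposition~\ref{dftbd}(3)) that $\dft(f,C) = \sup_{k_1,\ldots,k_r}\dft_{f,C}(k_1,\ldots,k_r)$, where \emph{all} variables are allowed to grow. Since $\dft_{f,C}$ is strictly increasing in each variable, that supremum is a genuine limit as all $k_i\to\infty$; so what must be shown is that once the minimal variables are large, the non-minimal variables contribute essentially nothing further to $\dft_{f,C}$. The key structural input is the nesting order: each non-minimal variable $x_j$ lies ``outside'' some minimal variable, and I would want to exploit how such an outer variable enters the low-defect expression.

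First I would set up the induction on the structure of a low-defect expression $E$ evaluating to $f$, using the three formation rules. The constant case is trivial (no variables). For a product $E = E_1\otimes E_2$, the defect is additive in the appropriate sense (this follows from how $\dft_{f,C}$ decomposes over $\otimes$, as recorded in the background from \cite{paperwo}), and the minimal variables of $E$ are the union of those of $E_1$ and $E_2$, so the inductive hypotheses for the two factors combine directly. The substantive case is $E = E'\cdot x + c$: here $x$ is the unique outermost variable, it is \emph{not} minimal (assuming $E'$ is nonconstant), and every minimal variable of $E$ is a minimal variable of $E'$. I would compute
\[
\dft_{E\cdot x+c,\,C}(k_1,\ldots,k_r,m)
= C + 3(k_1+\cdots+k_r) + 3m - 3\log_3\!\bigl(g(3^{k_1},\ldots,3^{k_r})\cdot 3^{m} + c\bigr),
\]
where $g$ is the low-defect polynomial for $E'$, and observe that as the minimal variables (hence $g$'s value) grow, the additive constant $c$ becomes negligible: $\log_3(g\cdot 3^m + c) \to \log_3 g + m$, so the $3m$ terms cancel in the limit and the expression converges to the corresponding quantity for $E'$ with the same (fixed) value of $m$. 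Thus letting the minimal variables of $E'$ go to infinity, the inductive hypothesis for $E'$ gives $\dft(E',\cpx{E'})$, and one checks the bookkeeping of base complexities matches $\dft(f,C)$.

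The main obstacle I anticipate is making the ``$c$ becomes negligible'' step uniform across the nested structure --- i.e. controlling the error terms when several layers of $\cdot x + c$ are stacked, so that driving the innermost (minimal) variables to infinity really does wash out \emph{all} the outer additive constants simultaneously, not just one at a time. The cleanest way around this is to avoid an explicit $\varepsilon$-estimate entirely: phrase the induction so that at each stage one passes to the limit only in the minimal variables of the current subexpression (which by definition are a subset of the minimal variables of the whole $f$), invoke monotonicity of $\dft_{f,C}$ to know these one-sided limits exist, and let the already-proven global identity $\dft(f,C)=\sup \dft_{f,C}$ supply the value. In other words, the inequality ``$\le$'' is immediate from Proposition~\ref{dftbd}(3) (the limit over fewer variables cannot exceed the sup over all of them), and only the reverse inequality needs the structural induction above, where it suffices to produce, for each $\varepsilon>0$, a single choice of the non-minimal $k_j$ together with large minimal $k_{i}$ achieving $\dft(f,C)-\varepsilon$; by monotonicity one may as well take the non-minimal $k_j$ to be whatever value the global sup-approximating tuple uses, and then push only the minimal coordinates up.
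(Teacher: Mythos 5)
Your structural induction (second paragraph) is the right argument, and it is essentially the one behind the result you are asked to prove --- note that this paper does not reprove the proposition at all, it simply cites Proposition~4.6 of \cite{theory}, so the comparison here is with that natural argument. The problem is the ``cleanest way around'' you adopt at the end: it opens a genuine gap. You assert that it suffices, for each $\varepsilon>0$, to produce a \emph{single} choice of the non-minimal $k_j$ together with large minimal $k_i$ achieving $\dft(f,C)-\varepsilon$. That only proves that the supremum of $\dft_{f,C}$ over tuples with large minimal coordinates equals $\dft(f,C)$, i.e.\ the conclusion for sufficiently large ($\varepsilon$-dependent) values of the non-minimal variables. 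The proposition asserts the limit equals $\dft(f,C)$ for \emph{every fixed} assignment of the non-minimal variables; monotonicity transfers your lower bound only to larger values of those coordinates, never to smaller ones, so the case with all non-minimal $k_j=0$ --- exactly the case used in Algorithm~\ref{algtrunc} --- is not covered by your reduction. That the pointwise statement genuinely needs the low-defect structure is illustrated by a general multilinear polynomial such as $x_1x_2+x_1$: there $f(3^{k_1},3^{k_2})/3^{k_1+k_2}=1+3^{-k_2}$, so the limit with $k_2$ fixed depends on $k_2$ and differs from the leading coefficient. So a soft sup-plus-monotonicity argument cannot replace the induction for the lower bound at fixed non-minimal values.

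The good news is that the obstacle which pushed you to that workaround is not real: no uniform control across nested layers is needed, because the induction is over a finite expression and each layer contributes an exact, separately vanishing error. In the step $f=g\cdot x_{r+1}+c$ one has the identity
\[
\dft_{f,C}(k_1,\ldots,k_r,m)
= \dft_{g,C}(k_1,\ldots,k_r)
- 3\log_3\Bigl(1+\frac{c}{g(3^{k_1},\ldots,3^{k_r})\,3^{m}}\Bigr),
\]
and since $g(3^{k_1},\ldots,3^{k_r})\ge 3^{k_1+\cdots+k_r}$ by Proposition~\ref{polystruct}, the correction term tends to $0$ as soon as any minimal variable of $g$ tends to infinity, uniformly in the fixed $m\ge 0$ (indeed $3^m\ge 1$). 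The leading coefficients of $f$ and $g$ coincide, so $\dft(f,C)=\dft(g,C)$ and the base-complexity bookkeeping is automatic if you carry the same $C$ through (it is only an additive constant). Combined with the additivity of both $\dft_{f,C}$ and $\dft(\cdot,\cdot)$ over $\otimes$, and the direct computation in the base cases ($f$ a constant, or $g$ a constant so that $x_{r+1}$ is itself the minimal variable), the finite structural induction already gives the full pointwise limit, with the upper bound supplied as you say by Proposition~\ref{dftbd}. Carry out that induction as you sketched it and delete the final reduction.
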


Note that if we store the actual low-defect expression rather than just the
resulting polynomial, finding the minimal variables is easy.

With this, we have the basic properties of low-defect polynomials.

\subsection{Good coverings}
\label{subsecgc}

Finally, before we begin listing algorithms, let us state precisely what
precisely the algorithms are for.  We will first need the notion of a
\emph{leader}:

\begin{defn}
A natural number $n$ is called a \emph{leader} if it is the smallest number with
a given defect.  By part (6) of Theorem~\ref{oldprops}, this is equivalent to
saying that either $3\nmid n$, or, if $3\mid n$, then $\dft(n)<\dft(n/3)$, i.e.,
$\cpx{n}<3+\cpx{n/3}$.
\end{defn}

Let us also define:

\begin{defn}
For any real $r\ge0$, define the set of {\em $r$-defect numbers} $A_r$ to be 
\[A_r := \{n\in\mathbb{N}:\dft(n)<r\}.\]
Define the set of {\em $r$-defect leaders} $B_r$ to be 
\[
B_r:= \{n \in A_r :~~n~~\mbox{is a leader}\}.
\]
\end{defn}

These sets are related by the following proposition from \cite{paperwo}:

\begin{prop}
\label{arbr}
For every $n\in A_r$, there exists a unique $m\in B_r$ and $k\ge 0$ such that
$n=3^k m$ and $\dft(n)=\dft(m)$; then $\cpx{n}=\cpx{m}+3k$.
\end{prop}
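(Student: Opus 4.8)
The plan is to read everything off from part~(6) of Theorem~\ref{oldprops}, which already classifies the numbers of a given defect. First I would set $\alpha := \dft(n)$; by part~(1) of Theorem~\ref{oldprops} this is a well-defined nonnegative real, and it is a defect that is actually attained (namely by $n$). Applying part~(6) of Theorem~\ref{oldprops} to $\alpha$, the set $\{m : \dft(m)=\alpha\}$ equals either $\{m_0 3^j : 0\le j\le L\}$ or $\{m_0 3^j : j\ge 0\}$ for a suitable $m_0$; in either case $m_0$ is the unique smallest element of this set, hence is by definition the (unique) leader of defect $\alpha$.

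Now set $m := m_0$. Since $\dft(m)=\alpha=\dft(n)<r$ we have $m\in A_r$, and since $m$ is a leader, $m\in B_r$. Because $n$ itself has defect $\alpha$, it lies in $\{m_0 3^j : \ldots\}$, so $n=3^k m$ for some $k\ge 0$, with $\dft(n)=\alpha=\dft(m)$ by construction. For the complexity identity I would invoke part~(2) of Theorem~\ref{oldprops}: it asserts $\dft(3^k m)\le\dft(m)$ with equality if and only if $\cpx{3^k m}=3k+\cpx{m}$; since here $\dft(3^k m)=\dft(n)=\dft(m)$, equality holds, and thus $\cpx{n}=\cpx{m}+3k$.

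It remains to check uniqueness of the pair $(m,k)$. Suppose $n=3^{k'}m'$ with $m'\in B_r$ and $\dft(n)=\dft(m')$. Then $m'$ is a leader of defect $\alpha$, so $m'=m_0=m$ by uniqueness of the smallest number of a given defect. Then $3^k m = 3^{k'} m$ with $m\ge 1$ forces $k=k'$, completing the argument.

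The only real content here is part~(6) of Theorem~\ref{oldprops} (and, implicitly, the equivalence stated in the definition of a leader between being smallest of a given defect and the $3\nmid n$ / $\dft(n)<\dft(n/3)$ conditions — though the main proof only uses the former). Given that input there is no serious obstacle: the argument is pure bookkeeping. If one instead wanted a self-contained proof, the work would move into reproving part~(6), whose heart is that multiplication by $3$ never increases the defect and changes it by an integer (part~(2)), so that along $3^{\Z}n$ the defect is weakly decreasing as one multiplies by $3$ and eventually stabilizes (part~(5)); the minimal value is therefore attained on a tail $\{m_0 3^j\}$, and minimality of $m_0$ is exactly what it means for $m_0$ to be a leader.
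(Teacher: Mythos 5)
Your argument is correct: the paper itself states Proposition~\ref{arbr} without proof, simply citing \cite{paperwo}, and your derivation---taking $m$ to be the unique leader of defect $\dft(n)$ supplied by part~(6) of Theorem~\ref{oldprops}, and then getting $\cpx{n}=\cpx{m}+3k$ from the equality case of part~(2)---is exactly the natural bookkeeping that the cited result encodes. The uniqueness step is also handled properly, since a leader is by definition the smallest number with its defect, so there is only one leader per defect value.
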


Because of this, if we want to describe the set $A_r$, it suffices to describe
the set $B_r$.

As mentioned earlier, what we want to do is to be able to write every number in
$A_r$ as $f(3^{k_1},\ldots,3^{k_r})3^{k_{r+1}}$ for some low-defect polynomial
$f$ drawn from a finite set depending on $r$.  In fact, we want to be able to
write every number in $B_r$ as $f(3^{k_1},\ldots,3^{k_r})$, with the same
restrictions.  So we define:

\begin{defn}
\label{coverdef}
For $r\ge0$, a finite set $\sS$ of low-defect pairs will be called a
\emph{covering set} for $B_r$ if every $n\in B_r$ can be efficiently
$3$-represented by some pair in $\sS$.  (And hence every $n\in A_r$ can be
efficiently represented by some $(\xpdd{f},C)$ with $(f,C)\in \sS$.)
\end{defn}

Of course, this is not always enough; we don't just want that every number in
$A_r$ can be represented in this way, but also that every number generated this
way is in $A_r$.  So we define:

\begin{defn}
\label{goodcover}
For $r\ge0$, a finite set $\sS$ of low-defect pairs will be called a
\emph{good covering} for $B_r$ if every $n\in B_r$ can be efficiently
$3$-represented by some pair in $\sS$ (and hence every $n\in A_r$ can be
efficiently represented by some $(\xpdd{f},C)$ with $(f,C)\in \sS$); for every
$(f,C)\in\sS$, $\dft(f,C)\le r$, with this being strict if $\deg f=0$.
\end{defn}

With this, it makes sense to state the following theorem from \cite{theory}:
\begin{thm}
\label{mainthm}
For any real number $r\ge 0$, there exists a good covering of $B_r$.
\end{thm}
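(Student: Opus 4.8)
The plan is to prove Theorem~\ref{mainthm} by induction on a scale parameter, building good coverings of $B_r$ incrementally in steps of some fixed size $\alpha\in(0,1)$. The base case is a good covering of $B_\alpha$; since $\alpha<1$ and $\dft(1)=1$, the only leader with defect $<\alpha$ that is not a power of $3$ times something larger is... well, actually the set of leaders with small defect is itself finite and easy to describe directly (for $\alpha$ small, $B_\alpha$ may even be empty or just $\{1\}$ depending on conventions), so one can write down a good covering explicitly. The real content is the inductive step: given a good covering $\sS$ of $B_{s}$, produce a good covering of $B_{s+\alpha}$ (or more precisely, pass from coverings at level $s$ to level $s+\alpha$, eventually reaching any desired $r$ in finitely many steps).

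For the inductive step I would use the structural decomposition of how a leader $n\in B_{s+\alpha}$ arises. Either $n$ is small (bounded in terms of $s+\alpha$, hence finitely many such $n$, each handled by a degree-$0$ pair $(n,\cpx n)$), or $n$ is large, in which case a shortest representation of $n$ must split as a sum $n = a+b$ or a product $n=ab$ in a way that forces the ``pieces'' to have strictly smaller defect — small enough to lie in $B_s$ (this is the standard defect-drop phenomenon: writing $n=a\cdot b$ with $b$ built efficiently gives $\dft(n)\ge\dft(a)$, and writing $n = (\text{something})\cdot 3^j + c$ lets one peel off structure while the remaining factor has defect reduced by roughly the ``cost'' of the $+c$). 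Concretely, following \cite{paperwo}, one takes each $(f,C)\in\sS$ and forms new low-defect pairs: products $(f\otimes g, C+C')$ for pairs $(g,C')\in\sS$, and expressions of the form $(f\otimes x + c,\, C + \cpx c)$ for appropriate small constants $c$. One shows that the resulting (still finite) collection efficiently $3$-represents every leader in $B_{s+\alpha}$, using Proposition~\ref{basicub} and Proposition~\ref{dftbd} to control defects, and Proposition~\ref{arbr} to reduce from $A$ to $B$. Finitely many steps of size $\alpha$ reach any $r$.

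The main obstacle is the word \emph{good} as opposed to merely \emph{covering}: the naive construction above will produce pairs $(f,C)$ with $\dft(f,C)$ possibly exceeding $s+\alpha$, so the covering set generated this way is not automatically a good covering — it may represent numbers outside $A_{s+\alpha}$. Controlling this requires bounding which new constants $c$ and which products actually need to be included (using part (3) of Proposition~\ref{dftbd}, that $\dft(f,C)$ is the supremum of the $\dft_{f,C}$ values, and Proposition~\ref{minlimit}, that this supremum is already approached by letting only the innermost variables grow), and then \emph{discarding} any generated pair whose defect is too large, after checking that every leader in $B_{s+\alpha}$ is still represented by something that survives. In the cited framework this last point is exactly where the ``truncation'' method of \cite{theory} enters — but for the bare existence statement of Theorem~\ref{mainthm} (as opposed to an algorithm), one only needs to know that after discarding the bad pairs a good covering remains, which follows because a leader $n$ represented \emph{efficiently} by $(f,C)$ forces, via the remark after the efficient-representation definition, that $C=\cpx f$ and hence $\dft(f,C)\le\dft(n)<s+\alpha$, so the surviving pairs suffice.

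I would therefore organize the write-up as: (1) state the inductive claim ``for every integer $i\ge0$ there is a good covering of $B_{i\alpha}$''; (2) base case by explicit description of the (finite) set of small-defect leaders; (3) inductive step — construct candidate pairs by $\otimes$ and $\otimes x + c$ operations on the previous covering, bound the number of new constants needed via the defect estimates, verify every leader of $B_{(i+1)\alpha}$ is efficiently represented (splitting into ``small $n$'' and ``$n$ factors/splits through a smaller-defect piece''), and finally prune to a good covering using the efficiency-forces-$C=\cpx f$ observation; (4) conclude for arbitrary real $r$ by choosing $i$ with $i\alpha\ge r$ and noting $B_r\subseteq B_{i\alpha}$ (intersecting with the defect-$<r$ condition preserves goodness after one more trivial prune). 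The delicate calculations — the exact bound on the constants $c$, and the verification that the defect-drop is large enough to land in $B_{(i-1)\alpha}$ rather than just $B_{i\alpha}$ — are precisely the parts I would cite from \cite{paperwo,theory} rather than redo.
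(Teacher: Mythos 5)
Your overall skeleton (induction in steps of $\alpha$, base case from the explicit description of small-defect leaders, inductive step via the $\otimes$ and $\otimes x + c$ constructions of \cite{paperwo}) is the same as the paper's, which simply combines the building-up theorem of \cite{paperwo} with the truncation theorem of \cite{theory}. But the step where you dispose of the word \emph{good} contains a genuine error. You claim that if a leader $n\in B_{s+\alpha}$ is efficiently $3$-represented by $(f,C)$, then $C=\cpx{f}$ forces $\dft(f,C)\le\dft(n)<s+\alpha$, so that discarding all pairs of too-large defect still leaves a covering. The inequality goes the wrong way: an efficient representation $n=f(3^{k_1},\ldots,3^{k_r})$ gives $\dft(n)=\dft_{f,C}(k_1,\ldots,k_r)$, and by Proposition~\ref{dftbd}(2) this is \emph{strictly less} than $\dft(f,C)$ whenever $\deg f\ge 1$; indeed $\dft(f,C)$ is the supremum of $\dft_{f,C}$ as the variables grow (Proposition~\ref{minlimit}) and can exceed $s+\alpha$ by a lot even though the pair is the only one in your collection representing certain small-variable leaders of defect $<s+\alpha$. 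So pruning such pairs destroys the covering property, and nothing in your argument restores it.

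This is precisely why the truncation method of \cite{theory} cannot be bypassed even for the bare existence statement: instead of discarding a pair $(f,C)$ with $\dft(f,C)>r$, one must \emph{split} it, substituting the finitely many powers $3^0,\ldots,3^K$ into its minimal variables (with $K$ furnished by Proposition~\ref{minlimit}, since $\dft_{f,C}$ eventually exceeds $r$ along the minimal variables) and recursing on the lower-degree pieces; the resulting finite family has every member of defect at most $r$ and still $3$-represents exactly the numbers the original pair represented within the defect bound. The same remark applies to your final step passing from $B_{i\alpha}$ to $B_r$: that too is a truncation to $r$, not a ``trivial prune.'' (A small side point: your base-case aside is off --- $B_\alpha$ is never empty, since $3\in B_\alpha$ with defect $0$, while $1\notin B_\alpha$ for $\alpha<1$ as $\dft(1)=1$; the correct explicit description is Theorem~\ref{finite}.) With the pruning argument replaced by genuine truncation, your outline becomes the proof of Theorem~4.9 of \cite{theory}, which is what the paper cites.
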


\begin{proof}
This is Theorem~4.9 from \cite{theory} rewritten in terms of
Definition~\ref{goodcover}.
\end{proof}

Computing good coverings, then, will be one of the primary subjects for the rest
of the paper.

Before we continue with that, however, it is also worth noting here the
following proposition from \cite{theory}:

\begin{prop}
\label{polycpxbd}
Let $(f,C)$ be a low-defect pair of degree $k$, and suppose that $a$ is the
leading coefficient of $f$.  Then $C\ge \cpx{a} + k$.  In particular,
$\dft(f,C) \ge \dft(a) + k \ge k$.
\end{prop}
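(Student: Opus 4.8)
The plan is to induct on the structure of the low-defect expression $E$ that generates $f$, using the recursive definition of low-defect expressions. The key quantity to track is the leading coefficient $a$ of $f$ together with the degree $k$ (which by Proposition~\ref{polystruct} equals the number of variables). First I would reduce to a statement purely about low-defect expressions: since $(f,C)$ only $3$-represents things efficiently when $C=\cpx{E}$ for the optimal generating expression $E$, and in any case $C\ge\cpx{E}$ for any generating $E$, it suffices to show that for any low-defect expression $E$ of degree $k$ with leading coefficient $a$, one has $\cpx{E}\ge\cpx{a}+k$. Here $\cpx{E}$ is the sum of the complexities of the constants appearing in $E$.

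Now I would run the induction on the three construction rules. \textbf{Base case:} $E$ is a single constant $n$; then $k=0$, the leading coefficient is $n$ itself, and $\cpx{E}=\cpx{n}=\cpx{a}+0$, so equality holds. \textbf{Product rule:} $E=E_1\cdot E_2$ with $E_i$ of degree $k_i$ and leading coefficient $a_i$; then $E$ has degree $k_1+k_2$, leading coefficient $a_1 a_2$, and $\cpx{E}=\cpx{E_1}+\cpx{E_2}$. By the inductive hypothesis $\cpx{E_i}\ge\cpx{a_i}+k_i$, so $\cpx{E}\ge\cpx{a_1}+\cpx{a_2}+k_1+k_2\ge\cpx{a_1 a_2}+(k_1+k_2)$, using the submultiplicativity $\cpx{a_1 a_2}\le\cpx{a_1}+\cpx{a_2}$. \textbf{Affine rule:} $E=E'\cdot x+c$ with $E'$ of degree $k'$ and leading coefficient $a'$; then $E$ has degree $k'+1$, its leading coefficient is still $a'$ (multiplying by the fresh variable $x$ and adding a constant does not change the coefficient of the top monomial, it just appends $x$ to it), and $\cpx{E}=\cpx{E'}+\cpx{c}\ge\cpx{E'}+1$ since every constant has complexity at least $1$. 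By the inductive hypothesis $\cpx{E'}\ge\cpx{a'}+k'$, hence $\cpx{E}\ge\cpx{a'}+k'+1=\cpx{a}+(k'+1)$.

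Finally the ``in particular'' clause: from $C\ge\cpx{a}+k$ we subtract $3\log_3 a$ from both sides to get $\dft(f,C)=C-3\log_3 a\ge(\cpx{a}-3\log_3 a)+k=\dft(a)+k$, and then $\dft(a)\ge 0$ by part (1) of Theorem~\ref{oldprops} gives $\dft(f,C)\ge k$. The only point requiring a little care — and the one I would state most explicitly — is the bookkeeping of \emph{which} coefficient is the leading one through each construction step, i.e., verifying that the leading coefficient is multiplicative under $\otimes$ and unchanged under the affine step; this is essentially a restatement of Proposition~\ref{polystruct} applied to the subexpressions, so it is not a genuine obstacle, just the place where the induction could go wrong if one is careless about relabeling variables.
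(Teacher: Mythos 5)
Your argument is correct, but it is worth noting how it relates to the paper: the paper does not prove this proposition internally at all --- it simply cites Proposition~3.24 and Corollary~3.25 of the reference \cite{theory} --- so what you have written is a self-contained structural induction replacing that citation. The induction itself is sound: the reduction to expressions, the base case, the multiplicativity of the leading coefficient under $\otimes$ together with $\cpx{a_1a_2}\le\cpx{a_1}+\cpx{a_2}$, the fact that the affine step $E'\cdot x+c$ preserves the leading coefficient while adding $\cpx{c}\ge 1$ to the expression complexity and $1$ to the degree, and the final passage from $C\ge\cpx{a}+k$ to $\dft(f,C)\ge\dft(a)+k\ge k$ via $\dft(a)\ge 0$ are all exactly what is needed. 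One small slip in your reduction step: it is not true that $C\ge\cpx{E}$ for \emph{every} expression $E$ generating $f$ (e.g.\ the pair $(4,4)$ versus the generating expression $1\cdot 4$ of complexity $5$); the definition of a low-defect pair only guarantees the existence of \emph{some} generating expression $E$ with $C\ge\cpx{E}$. This does not harm the proof, since your bound $\cpx{E}\ge\cpx{a}+k$ holds for every generating expression and in particular for that witness, but the sentence should be phrased existentially. The preliminary remark about efficient $3$-representation is not needed for the argument.
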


\begin{proof}
This is a combination of Proposition~3.24 and Corollary~3.25 from \cite{theory}.
\end{proof}

This implies that in any good covering of $B_r$, all polynomials have degree at
most $\lfloor r \rfloor$.

\section{Algorithms: Building up covering sets}

Now let us discuss the ``building-up'' method from \cite{paper1} and
\cite{paperwo} that forms one-half the core of the algorithm.  The second
``filtering-down'' half, truncation, will be described in
Section~\ref{secalgos}.  This section will describe how to compute covering
sets for $B_r$ (see Definition~\ref{coverdef}); the next section will describe
how to turn them into good coverings.

Note however that the results of the above sections and previous papers deal
with real numbers, but real numbers cannot be represented exactly in a computer.
Hence, we will for the rest of this section fix a subset $\cR$ of the real
numbers on which we can do exact computation.  For concreteness, we will define

\begin{defn}
The set $\cR$ is the set of all real numbers of the form $q+r\log_3 n$, where
$q$ and $r$ are rational and $n$ is a natural number.
\end{defn}

This will suffice for our purposes; it contains all the numbers we're working
with here.  However it is worth noting that all these algorithms will work just
as well with a larger set of allowed numbers, so long as it supports all the
required operations.

Note that since the algorithms in both this section and later sections consist,
in some cases, of simply using the methods described in proofs of theorems in
\cite{paperwo} and \cite{theory}, we will, in these cases, not give detailed
proofs of correctness; we will simply direct the reader to the proof of the
corresponding theorem.  We will include proofs of correctness only where we are
not directly following the proof of an earlier theorem.

\subsection{Algorithm \ref{startalg}: Computing $B_{\alpha}$, $0 < \alpha <1$.}

The theorems of \cite{paperwo} that build up covering sets for $B_r$ do so
inductively; they require first picking a step size $\alpha\in(0,1)$ and
then determining covering sets $B_{k\alpha}$ for natural numbers $k$.  So first,
we need a base case -- an algorithm to compute $B_\alpha$.  Fortunately, this is
given by the following theorem from \cite{paper1}:

\begin{thm}
\label{finite}
For every $\alpha$ with $0<\alpha<1$, the set of leaders $B_\alpha$ is a finite
set.  More specifically, the list of $n$ with $\dft(n)<1$ is as follows:
\begin{enumerate}
\item $3^\ell$ for $\ell\ge 1$, of complexity $3\ell$ and defect $0$
\item $2^k 3^\ell$ for $1\le k\le 9$, of complexity $2k+3\ell$ and defect
$k\dft(2)$
\item $5\cdot2^k 3^\ell$ for $k\le 3$, of complexity $5+2k+3\ell$ and defect
$\dft(5)+k\dft(2)$
\item $7\cdot2^k 3^\ell$ for $k\le 2$, of complexity $6+2k+3\ell$ and defect
$\dft(7)+k\dft(2)$
\item $19\cdot3^\ell$ of complexity $9+3\ell$ and defect $\dft(19)$
\item $13\cdot3^\ell$ of complexity $8+3\ell$ and defect $\dft(13)$
\item $(3^k+1)3^\ell$ for $k>0$, of complexity $1+3k+3\ell$ and defect
$1-3\log_3(1+3^{-k})$
\end{enumerate}
\end{thm}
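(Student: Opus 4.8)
The plan is to prove this by a finite exhaustive search, made possible by combining the lower bound $\cpx{n} \ge 3\log_3 n$ with the dynamic-programming recurrence for $\cpx{n}$. The key observation is that $\dft(n) < 1$ is a very strong constraint: it forces $\cpx{n} < 3\log_3 n + 1$, so $n > 3^{(\cpx{n}-1)/3}$. This means that among numbers of a given complexity $k$, only those exceeding roughly $3^{(k-1)/3}$ can possibly be leaders with defect below $1$, and as $k$ grows this cuts down the candidates dramatically. First I would reduce to classifying leaders: by Theorem~\ref{oldprops}(6), every number with a given defect is $3^\ell$ times a unique leader, and $\dft(3^\ell m) = \dft(m)$ when $m$ is a leader, so it suffices to find all leaders $m$ with $\dft(m) < 1$ and then append the factor $3^\ell$ (adding $3\ell$ to the complexity, leaving the defect unchanged). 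This is exactly why each family in the statement carries a free $3^\ell$.

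Next I would set up the finiteness argument that bounds the search. Since $\dft$ is always nonnegative and a leader $m$ with $3 \mid m$ satisfies $\dft(m) < \dft(m/3)$, while a leader with $3 \nmid m$ is automatic, I can proceed by induction on $\cpx{m}$ using the recurrence
\[
\cpx{m} = \min_{\substack{a,b < m \\ a+b=m \text{ or } ab=m}} (\cpx{a} + \cpx{b}).
\]
If $m$ has defect less than $1$ and $\cpx{m} = k$, then in an optimal decomposition $m = ab$ (resp. $m = a+b$) one checks that $a$ and $b$ must themselves have small defect — concretely, $\dft(a) + \dft(b) \le \dft(m) + (\text{something})$ in the multiplicative case, and in the additive case the smaller summand is forced to be tiny because $\cpx{a} + \cpx{b} = \cpx{m} < 3\log_3 m + 1$ together with $\cpx{a},\cpx{b} \ge 3\log_3 a, 3\log_3 b$ forces $\min(a,b)$ to be bounded. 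So the leaders with defect $< 1$ are built from smaller such leaders by multiplication, or from a smaller such leader by adding a bounded constant; this gives a terminating recursive enumeration. The families $2^k 3^\ell$, $5\cdot 2^k 3^\ell$, $7 \cdot 2^k 3^\ell$ arise from repeated multiplication by $2$ starting from $1$, $5$, $7$ (with the ranges on $k$ cut off precisely when the accumulated defect $k\,\dft(2)$, $\dft(5) + k\,\dft(2)$, etc., would reach $1$); the sporadic leaders $13$, $19$ and the family $(3^k+1)3^\ell$ arise from the ``$E \cdot x + c$'' type step, i.e. from additions $3^k + 1$.

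Then I would verify the list is complete and the data correct: for each claimed leader, confirm its complexity by exhibiting the stated expression (giving the upper bound) and checking no shorter one exists (a finite check, bounded by the recurrence), confirm it is a leader, and compute its defect; and conversely argue that the recursive enumeration produces nothing outside the seven families. The complexity values $2k+3\ell$, $5+2k+3\ell$, $6+2k+3\ell$, $9+3\ell$, $8+3\ell$, $1+3k+3\ell$ and the defect formulas are then read off directly. The main obstacle I expect is the additive case of the recurrence: ruling out that some large leader with defect $< 1$ is built as $a + b$ with both $a,b$ moderately large requires showing that near-equal splits always raise the defect above $1$, which takes a careful inequality argument (essentially: $\cpx{a} + \cpx{b} - 3\log_3(a+b)$ is bounded below by $\dft(a) + \dft(b)$ minus a term that is small only when one summand dominates), plus checking the finitely many genuinely small summands $b$ by hand. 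The multiplicative case is comparatively routine since defects simply add, $\dft(ab) = \dft(a) + \dft(b)$ when the product is efficient, so only products with total defect $< 1$ survive, and these are exactly the listed ones. Since this theorem is quoted from \cite{paper1}, I would ultimately cite that source for the detailed verification rather than reproduce it.
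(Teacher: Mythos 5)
The paper itself gives no proof of this statement: Theorem~\ref{finite} is quoted verbatim from \cite{paper1}, and the only ``proof'' in this paper is that citation, so there is nothing internal to compare against line by line. Your sketch is in fact a reasonable outline of the argument used in the cited source: reduce to leaders via Theorem~\ref{oldprops}, induct using the recurrence, use additivity of the defect in the efficient-product case, and in the efficient-sum case use $3\log_3 a+3\log_3 b\le\cpx{a+b}<3\log_3(a+b)+1$ (i.e.\ concavity of the logarithm) to force the smaller summand to be $1$ or $2$, after which the sporadic leaders $13=12+1$, $19=18+1$ and the family $3^k+1$ emerge from finitely many residual checks. Ending by deferring to \cite{paper1} for the detailed verification is consistent with what the paper does.

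One point is genuinely off, though: the opening framing as ``a finite exhaustive search'' does not work, and your proposal never actually establishes the first assertion of the theorem, the finiteness of $B_\alpha$ for $0<\alpha<1$. The set of $n$ with $\dft(n)<1$ is infinite (seven infinite families), and even the set of \emph{leaders} with defect $<1$ is infinite, since every $3^k+1$ with $k>0$ is a leader; so no bounded search over complexities can produce the classification, and finiteness cannot be read off from ``candidates are cut down dramatically.'' What does the work is the structural induction you describe afterwards, together with the observation -- which you should state explicitly -- that the defects $1-3\log_3(1+3^{-k})$ in family (7) increase to $1$, so for any fixed $\alpha<1$ only finitely many members of that family (and trivially of the other families, whose leader lists are already finite) have defect below $\alpha$. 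With that one-line addendum the finiteness claim follows from the list; without it, part of the theorem is simply not addressed.
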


Strictly speaking, we do not necessarily need this theorem to the same extent as
\cite{paperwo} needed it; we only need it if we want to be able to choose step
sizes $\alpha$ with $\alpha$ arbitrarily close to $1$.  In \cite{paperwo}, this
was necessary to keep small the degrees of the polynomials; larger steps
translates into fewer steps, which translates into lower degree.  However, in
Section~\ref{secalgos}, we will introduce algorithms for performing truncation,
as described in \cite{theory}; and with truncation, we can limit the degree
without needing large steps (see Corollary~\ref{polycpxbd}), allowing us to keep
$\alpha$ small if we so choose.  For instance, in the attached implementation,
we always use $\alpha=\dft(2)$.  Nonetheless, one may wish to use larger
$\alpha$, so this proposition is worth noting.

The above theorem can be rephrased as our Algorithm~\ref{startalg}:

\begin{algorithm}[ht]
\caption{Determine the set $B_\alpha$}
\label{startalg}
\begin{algorithmic}
\ENSURE $\alpha \in (0,1)\cap \cR$
\REQUIRE $T= \{ (n,k) : n\in B_\alpha, k=\cpx{n} \}$
\STATE $T \gets \{(3,3)\}$
\STATE Determine largest integer $k$ such that $k\dft(2)<\alpha$ and $k\le 9$
\COMMENT{$k$ may be $0$, in which case the following loop never executes}
\FOR{$i=1$ \TO $k$}
	\STATE $T\gets T \cup \{(2^i,2i)\}$
\ENDFOR
\STATE Determine largest integer $k$ such that $\dft(5)+k\dft(2)<\alpha$ and
$k\le 3$
\COMMENT{$k$ may be negative, in which case the following loop never executes}
\FOR{$i=0$ \TO $k$}
	\STATE $T\gets T \cup \{(5\cdot2^i,5+2i)\}$
\ENDFOR
\STATE Determine largest integer $k$ such that $\dft(7)+k\dft(2)<\alpha$ and
$k\le 2$
\COMMENT{$k$ may be negative, in which case the following loop never executes}
\FOR{$i=0$ \TO $k$}
	\STATE $T\gets T \cup \{(7\cdot2^i,6+2i)\}$
\ENDFOR
\IF{$\alpha>\dft(19)$}
	\STATE $T\gets T\cup \{(19,9)\}$
\ENDIF
\IF{$\alpha>\dft(13)$}
	\STATE $T\gets T\cup \{(13,8)\}$
\ENDIF
\STATE Determine largest integer $k$ for which $1-3\log_3(1+3^{-k})<\alpha$
\COMMENT{$k$ may be $0$, in which case the following loop never executes}
\FOR{$i=1$ \TO $k$}
	\STATE $T\gets T \cup \{(3^i+1,1+3i)\}$
\ENDFOR
\RETURN $T$
\end{algorithmic}
\end{algorithm}

\begin{proof}[Proof of correctness for Algorithm~\ref{startalg}]
The correctness of this algorithm is immediate from Theorem~\ref{finite}.
\end{proof}

\subsection{Algorithm \ref{buildalg}: Computing $B_{(k+1)\alpha}$.}

Now we record Algorithm~\ref{buildalg}, for computing a covering set for
$B_{(k+1)\alpha}$ if we have ones already for $B_\alpha,\ldots,B_{k\alpha}$.
This algorithm is essentially the proof of Theorem~4.10 from \cite{paperwo},
though we have made a slight modification to avoid redundancy.

Algorithm~\ref{buildalg} refers to ``solid numbers'', and to a set $T_\alpha$,
notions taken from \cite{paper1}, which we have not thus far defined, so let us
define those here.

\begin{defns}
We say a number $n$ is \emph{solid} if it cannot be efficiently represented as a
a sum, i.e., there do not exist numbers $a$ and $b$ with $a+b=n$ and
$\cpx{a}+\cpx{b}=n$.  We say a number $n$ is \emph{m-irreducible} if it cannot
be efficiently represented as a product, i.e., there do not exist $a$ and $b$
with $ab=n$ and $\cpx{a}+\cpx{b}=n$.  We define the set $T_\alpha$ to consist of
$1$ together with those m-irreducible numbers $n$ which satisfy
\[ \frac{1}{n-1} > 3^{\frac{1-\alpha}{3}} -1 \]
and do not satisfy $\cpx{n}=\cpx{n-b}+\cpx{b}$ for any solid $b$ with $1<b\le
n/2$.
\end{defns}

\begin{algorithm}[ht]
\caption{Compute a covering set $\sS_{k+1}$ for $B_{(k+1)\alpha}$ from covering
sets $\sS_1,\ldots,\sS_k$ for $B_\alpha , \ldots, B_{k\alpha}$}
\label{buildalg}
\begin{algorithmic}
\REQUIRE $k\in\mathbb{N}$, $\alpha\in (0,1)\cap \cR$, $\sS_i$ a covering set for
$B_{i\alpha}$ for $1\le i\le k$
\ENSURE $\sS_{k+1}$ a covering set for $B_{(k+1)\alpha}$
\FORALL{$i=1$ \TO $k$}
	\STATE $\sS'_i \gets \sS_i \setminus \{(1,1),(3,3)\}$
\ENDFOR
\STATE $\sS_{k+1}\gets \emptyset$
\STATE Compute the set $T_\alpha$, and the complexities of its elements; let $U$
be the set $\{(n,\cpx{n}):n\in T_\alpha\}$ \COMMENT{One may use instead a
superset of $T_\alpha$ if determining $T_\alpha$ exactly takes too long}
\STATE Compute the set $V_{k,\alpha}$, the set of solid numbers $n$ such that
$\cpx{n}<(k+1)\alpha+3\log_3 2$
\COMMENT{Again, one may use a superset}
\IF{$k=1$}
	\STATE $\sS_{k+1}\gets \sS_{k+1} \cup \{(f_1 \otimes f_2 \otimes f_3, C_1 + C_2 + C_3) : (f_\ell,C_\ell)\in \sS'_1\}$
	\STATE $\sS_{k+1}\gets \sS_{k+1} \cup \{(f_1 \otimes f_2, C_1 + C_2) : (f_\ell,C_\ell)\in \sS'_1\}$
\ELSE
	\STATE $\sS_{k+1}\gets \sS_{k+1} \cup \{(f \otimes g, C+D) :
		(f,C)\in \sS'_i, (g,D)\in \sS'_j, i+j=k+2\}$
\ENDIF
\STATE $\sS_{k+1}\gets \sS_{k+1}\cup \{(f\otimes x+b,C+\cpx{b}):(f,C)\in
\sS_{k\alpha}, b\in V_{k,\alpha}\}$
\STATE $\sS_{k+1}\gets \sS_{k+1}\cup \{(g\otimes(f\otimes x+b),C+D+\cpx{b}):
(f,C)\in \sS_{k\alpha}, b\in V_{k,\alpha}, (g,D)\in \sS'_1\}$
\STATE $\sS_{k+1}\gets \sS_{k+1}\cup U$
\STATE $\sS_{k+1}\gets \sS_{k+1}\cup \{(f\otimes g, C+D): f\in U, g\in
\sS'_1\}$
\RETURN $\sS_{k+1}$
\end{algorithmic}
\end{algorithm}

\begin{proof}[Proof of correctness for Algorithm~\ref{buildalg}]
If we examine the proof of Theorem~4.10 from \cite{paperwo}, it actually proves
the following statement: Suppose that $0<\alpha<1$ and that $k\ge1$.  Further
suppose that $\sS_{1,\alpha}, \sS_{2,\alpha}, \ldots, \sS_{k,\alpha}$ are
covering sets for $B_\alpha$, $B_{2\alpha}, \ldots, B_{k\alpha}$,
respectively.  Then we can build a covering set $\sS_{k+1,\alpha}$ for
$B_{(k+1)\alpha}$ as follows:
\begin{enumerate}
\item If $k+1>2$, then for $(f,C)\in \sS_{i,\alpha}$ and $(g,D)\in
\sS_{j,\alpha}$ with $2\le i, j\le k$ and $i+j=k+2$ we include $(f\otimes g,
C+D)$ in $\sS_{k+1,\alpha}$; \\
while if $k+1=2$, then for $(f_1,C_1),(f_2,C_2),(f_3,C_3)\in \sS_{1,\alpha}$,
we include $(f_1 \otimes f_2, C_1+C_2)$ and $(f_1\otimes f_2\otimes f_3,
C_1+C_2+C_3)$ in $\sS_{2,\alpha}$.
\item For $(f,C)\in \sS_{k,\alpha}$ and any solid number $b$
with $\cpx{b}<(k+1)\alpha+3\log_3 2$, we include $(f\otimes x_1 + b, C+\cpx{b})$
in $\sS_{k+1,\alpha}$.
\item For $(f,C)\in \sS_{k,\alpha}$, any solid number $b$
with $\cpx{b}<(k+1)\alpha+3\log_3 2$, and any $v\in B_\alpha$, we include
$(v(f\otimes x_1 + b), C+\cpx{b}+\cpx{v})$ in $\sS_{k+1,\alpha}$.
\item For all $n\in T_\alpha$, we include $(n,\cpx{n})$ in $\sS_{k+1,\alpha}$.
\item For all $n\in T_\alpha$ and $v\in B_\alpha$, we include $(vn,\cpx{vn})$ in
$\sS_{k+1,\alpha}$.
\end{enumerate}

Algorithm~\ref{buildalg} is, for the most part, exactly this statement.
The only difference is the removal of the pairs $(3,3)$ and $(1,1)$ from the
possibilities of things to multiply by; this step needs additional
justification.  For $(1,1)$, this is because no number $n$ can be
most-efficiently represented as $1\cdot n$; if $(f,C)$ is a low-defect pair,
then the low-defect pair $(f,C+1)$ cannot efficiently $3$-represent anything, as
anything it $3$-represents is also $3$-represented by the pair $(f,C)$.  For
$(3,3)$, there are two possibilities.  If $3n$ is a number which is
$3$-represented by by $(3f,C+3)$, then either the representation as $3\cdot n$
is most-efficient or it is not.  If it is, then $3n$ is not a leader, and so not
in any $B_{i\alpha}$, and thus we do not need it to be $3$-represented.  If it
is not, then it is not efficiently $3$-represented by $(3f,C+3)$.  So these
particular pairs do not need to be multiplied by, and the algorithm still works.
\end{proof}

\subsection{Algorithm \ref{algcover}: Computing a covering set for $B_{r}$.}

We can now put the two of these together to form Algorithm~\ref{algcover}, for
computing a covering set for $B_r$.  If we look ahead to
Algorithm~\ref{manytruncalg}, we can turn it into a good covering.

\begin{algorithm}[ht]
\caption{Compute a covering set for $B_r$}
\label{algcover}
\begin{algorithmic}
\REQUIRE $r\in \cR, r\ge0$
\ENSURE $S$ is a covering set for $B_r$
\STATE Choose a step size $\alpha\in (0,1)\cap \cR$
\STATE Let $T_1$ be the output of Algorithm~\ref{startalg} for $\alpha$
	\COMMENT{This is a good covering of $B_\alpha$}
\FOR{$k=1$ \TO $\lceil \frac{r}{\alpha} \rceil-1$}
	\STATE Use Algorithm~\ref{buildalg} to compute a covering set $T_{k+1}$
	for $B_{(k+1)\alpha}$ from our covering sets $T_i$ for $B_{i\alpha}$
	\STATE Optional step: Do other things to $T_{k+1}$ that continue to keep
	it a covering set for $B_{(k+1)\alpha}$ while making it more practical
	to work with.  For instance, one may use Algorithm~\ref{manytruncalg} to
	turn it into a good covering of $B_{(k+1)\alpha}$, or one may remove
	elements of $T_{k+1}$ that are redundant (i.e., if one has $(f,C)$ and
	$(g,D)$ in $T_{k+1}$ such that any $n$ which is efficiently
	$3$-represented by $(f,C)$ is also efficiently represented by $(g,D)$,
	one may remove $(f,C)$)
\ENDFOR
\STATE $S \gets T_{k+1}$
\RETURN $S$
\end{algorithmic}
\end{algorithm}

\begin{proof}[Proof of correctness for Algorithm~\ref{algcover}]
Assuming the correctness of Algorithm~\ref{startalg} and
Algorithm~\ref{buildalg}, the correctness of Algorithm~\ref{algcover} follows
immediately.  Again, this is just making use of the proof of Theorem~4.10 from
\cite{paperwo}.
\end{proof}

\section{Algorithms: Computing good coverings}
\label{secalgos}

We have now completed the ``building-up'' half of the method; in this section we
will describe the ``filtering-down'' half.  The algorithms here will be based on
the proofs of the theorems in \cite{theory}, so we will once again refer the
reader to said proofs in our proofs of correctness.

\subsection{Algorithm \ref{algtrunc}: Truncating a polynomial to a given defect.}
The first step in being able to filter down is Algorithm~\ref{algtrunc}, for
truncating a given polynomial to a given defect:

\begin{algorithm}[ht]
\caption{Truncate the low-defect pair $(f,C)$ to the defect $s$}
\label{algtrunc}
\begin{algorithmic}
\REQUIRE $(f,C)$ is a low-defect pair, $s\in\cR$
\ENSURE $T$ is the truncation of $(f,C)$ to the defect $s$
\IF{$\deg f=0$}
	\IF{$\dft(f,C)<s$}
		\STATE $T\gets \{(f,C)\}$
	\ELSE
		\STATE $T\gets \emptyset$
	\ENDIF
\ELSE
	\IF{$\dft(f,C)\le s$}
		\STATE $T\gets \{(f,C)\}$
	\ELSE
		\STATE Find the smallest $K$ for which
		$\dft_{f,C}(k_1,\ldots,k_r)\ge s$, where $k_i=K+1$ if $x_i$ is
		minimal in the nesting ordering and $x_i=0$ otherwise
		\STATE $T\gets \emptyset$
		\FORALL{$x_i$ a minimal variable, $k\le K$}
			\STATE Let $g$ be $f$ with $3^k$ substituted in for
			$x_i$ and let $D=C+3k$
			\STATE Recursively apply Algorithm~\ref{algtrunc} to
			$(g,D)$ and $s$ to obtain a set $T'$
			\STATE $T\gets T\cup T'$
		\ENDFOR
	\ENDIF
\ENDIF
\RETURN $S$
\end{algorithmic}
\end{algorithm}

\begin{proof}[Proof of correctness for Algorithm~\ref{algtrunc}]
This is an algorithmic version of the method described in the proof of
Theorem~4.8 from \cite{theory}; see that for details.  (Note that $K$ is
guaranteed to exist by Proposition~\ref{minlimit}; one can find it by brute
force or slight variants.)  There is a slight difference between the two methods
in that the method described there, rather than forgetting $(f,C)$ when it
recursively applies the method to $(g,D)$ and directly generating the set $T$,
instead generates a set of values for variables that may be substituted into $f$
to yield the set $T$, only performing the substitution at the end.  This is the
same method, but without keeping track of extra information so that it can be
written in a more straightforwardly recursive manner.
\end{proof}

\subsection{Algorithm \ref{manytruncalg}: Truncating many polynomials to a given defect.}

If we can truncate one polynomial, we can truncate many of them
(Algorithm~\ref{manytruncalg}):

\begin{algorithm}[ht]
\caption{Compute a good covering of $B_r$ from a covering set for $B_r$}
\label{manytruncalg}
\begin{algorithmic}
\REQUIRE $r\in \cR$, $r\ge0$, $\sT$ a covering set for $B_r$
\ENSURE $\sS$ is a good covering of $B_r$
\STATE $\sS\gets \emptyset$
\FORALL{$(f,C)\in \sT$}
	\STATE Use Algorithm~\ref{algtrunc} to truncate $(f,C)$ to $r$; call the
		result $\sS'$
	\STATE $\sS\gets \sS\cup \sS'$
\ENDFOR
\RETURN $\sS$
\end{algorithmic}
\end{algorithm}

\begin{proof}[Proof of correctness for Algorithm~\ref{manytruncalg}]
This is an algorithmic version of the method described in the proof of
Theorem~4.9 from \cite{theory} -- that if one has a covering set for $B_r$ and
truncates each of its elements to the defect $r$, one obtains a good covering of
$B_r$.  It can also be seen as an application of the correctness
of Algorithms~\ref{buildalg} and \ref{algtrunc}.
\end{proof}

\subsection{Algorithm \ref{mainalg}: Computing a good covering of  $B_{r}$.}

We can then put this together into Algorithm~\ref{mainalg}, for computing a
good covering of $B_r$:

\begin{algorithm}[ht]
\caption{Compute a good covering of $B_r$}
\label{mainalg}
\begin{algorithmic}
\REQUIRE $r\in \cR$, $r\ge0$
\ENSURE $\sS$ is a good covering of $B_r$
\STATE Use Algorithm~\ref{algcover} to compute a covering set $\sT$ for $B_r$
\STATE Use Algorithm~\ref{manytruncalg} to compute a good covering $\sS$ for
$B_r$ from $\sT$
\RETURN $\sS$
\end{algorithmic}
\end{algorithm}

\begin{proof}[Proof of correctness for Algorithm~\ref{mainalg}]
This follows immediately from the correctness of Algorithms~\ref{algcover} and
\ref{manytruncalg}.
\end{proof}

We've now described how to compute good coverings of $B_r$.  But it still
remains to show how to use this to compute other quantities of interest, such as
$K(n)$ and $\cpx{n}_\st$.  We address this in the next section.

\section{Algorithms:  Computing stabilization length $K(n)$ and stable complexity $\cpx{n}_\st$}
\label{secmorealgos}

In order to compute $K(n)$ and $\cpx{n}_\st$, we're going to need to have to be
able to tell, algorithmically, whether, given a low-defect polynomial $f$ and a
a number $n$, there exists $k\ge 0$ such that $f$ $3$-represents $3^k n$.  If we
simply want to know whether $f$ $3$-represents $n$, this is easy; because
\[ f(3^{k_1},\ldots,3^{k_r})\ge 3^{k_1+\ldots+k_r}, \]
we have an upper bound on how large the $k_i$ can be and we can solve this with
brute force.  However, if we want to check whether it represents $3^k n$ for any
$k$, clearly this will not suffice, as there are infinitely many possibilities
for $k$.  We will need a lemma to narrow them down:

\begin{lem}
\label{maxv3}
Let $f$ be a polynomial in $r$ variables with nonnegative integer coefficients
and nonzero constant term; write
\[ f(x_1,\ldots,x_r) = \sum a_{i_1,\ldots,i_r} x_1^{i_1} \ldots x_r^{i_r} \]
with $a_{i_1,\ldots,i_r}$ positive integers and $a_{0,\ldots,0}>0$.
Let $b>1$ be a natural number and let $v_b(n)$ denote the number of times $n$
is divisible by $b$.  Then for any $k_1,\ldots,k_r\in\mathbb{Z}_{\ge0}$, we
have
\[ v_b(f(b^{k_1},\ldots,b^{k_r})) \le \sum_{a_{i_1,\ldots,i_r}>0}
	(\lfloor \log_b a_{i_1,\ldots,i_r} \rfloor+1)-1. \]
In particular, this applies when $f$ is a low-defect polynomial and $b=3$.
\end{lem}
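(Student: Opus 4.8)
The plan is to reduce the statement to an elementary fact about the $b$‑adic valuation of a sum of integers. Set $N := f(b^{k_1},\ldots,b^{k_r})$. Expanding $f$ and substituting,
\[ N \;=\; \sum a_{i_1,\ldots,i_r}\; b^{\,i_1 k_1 + \cdots + i_r k_r}, \]
the sum running over the (finitely many) index tuples with $a_{i_1,\ldots,i_r}>0$; note that the tuple $(0,\ldots,0)$ contributes a summand of exponent $0$. Thus it suffices to prove the following purely arithmetic claim: if $c_1,\ldots,c_t$ are positive integers and $e_1,\ldots,e_t$ nonnegative integers with $\min_j e_j = 0$, then, writing $\lambda(c) := \lfloor\log_b c\rfloor + 1$ for the number of base‑$b$ digits of a positive integer $c$,
\[ v_b\Bigl(\textstyle\sum_{j=1}^{t} c_j b^{e_j}\Bigr) \;\le\; \Bigl(\textstyle\sum_{j=1}^{t}\lambda(c_j)\Bigr) - 1. \]
Applying this with the $c_j$ running over the coefficients $a_{i_1,\ldots,i_r}$ and the $e_j$ the corresponding exponents $i_1 k_1 + \cdots + i_r k_r$ (the constant term forcing $\min_j e_j = 0$) gives exactly the asserted inequality. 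The final sentence of the lemma then follows, since a low-defect polynomial has nonzero constant term by Proposition~\ref{polystruct}, so the hypothesis holds with $b=3$.

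To prove the arithmetic claim I would first record that base-$b$ length is subadditive: for positive integers $c,c'$ one has $\lambda(c+c') \le \lambda(c)+\lambda(c')$, because $c+c' < b^{\lambda(c)}+b^{\lambda(c')} \le 2\,b^{\max(\lambda(c),\lambda(c'))} \le b^{\lambda(c)+\lambda(c')}$, using $b\ge 2$ and $\lambda(\cdot)\ge 1$; iterating, $\lambda\bigl(\sum_i c_i\bigr) \le \sum_i \lambda(c_i)$. Then I would induct on the pair $\bigl(t,\ \sum_j\lambda(c_j)\bigr)$ ordered lexicographically. Split the sum according to the exponent: put $S := \sum_{j:\,e_j=0} c_j$ (a nonempty sum, hence $S\ge 1$) and $T := \sum_{j:\,e_j\ge 1} c_j b^{e_j-1}$, so $N = S + bT$. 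If $b\nmid S$ then $b\nmid N$ and $v_b(N)=0$, which suffices as $\sum_j\lambda(c_j)\ge t\ge 1$. Otherwise $b\mid S$, so $S/b$ is a positive integer and $N/b = (S/b) + T$; this is again a sum of the required shape, with summand $(S/b,\,0)$ together with the summands $(c_j,\,e_j-1)$ for $e_j\ge 1$. It has at most $t$ summands, its minimum exponent is still $0$, and its total length is
\[ \lambda(S/b) + \sum_{j:\,e_j\ge 1}\lambda(c_j) \;=\; \bigl(\lambda(S)-1\bigr) + \sum_{j:\,e_j\ge 1}\lambda(c_j) \;\le\; \Bigl(\sum_{j}\lambda(c_j)\Bigr) - 1, \]
by subadditivity applied to $S = \sum_{e_j=0} c_j$. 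Hence the pair strictly decreases, the inductive hypothesis gives $v_b(N/b) \le \sum_j\lambda(c_j) - 2$, and so $v_b(N) = 1 + v_b(N/b) \le \sum_j\lambda(c_j) - 1$. The base cases $t=1$ are immediate: then $e_1=0$, $N=c_1$, and $v_b(c_1)\le\lfloor\log_b c_1\rfloor = \lambda(c_1)-1$.

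The one point requiring care — and where I would be most attentive — is the bookkeeping: the bound must be counted in terms of the number of \emph{monomials} of $f$, so after substitution one must keep the summands $a_{i_1,\ldots,i_r}b^{\,i_1k_1+\cdots+i_rk_r}$ separate and not collect those that happen to share the same power of $b$, since distinct tuples $(i_1,\ldots,i_r)$ can produce the same exponent $\sum_\ell i_\ell k_\ell$. Collecting them would not break the inequality — subadditivity of $\lambda$ shows it can only shrink the right-hand side — but keeping the summands separate is what makes the lexicographic induction on $\bigl(t,\sum_j\lambda(c_j)\bigr)$ go through cleanly. Beyond that, the only inputs used are the subadditivity estimate, the integrality and nonnegativity of the coefficients, and the presence of a summand of exponent $0$ (provided by the nonzero constant term); everything else is routine, so I expect the proof to be short.
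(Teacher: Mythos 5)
Your proof is correct, but it takes a genuinely different route from the paper's. You reduce to the arithmetic claim about $\sum_j c_j b^{e_j}$ with $\min_j e_j=0$ and then peel off factors of $b$: reading $N\bmod b$ from the exponent-zero block $S$, you either stop (if $b\nmid S$) or replace the instance by $N/b$ with the summand $(S/b,0)$, the key point being $\lambda(S/b)=\lambda(S)-1\le\sum_{e_j=0}\lambda(c_j)-1$ by subadditivity of base-$b$ digit length, so the total digit count strictly decreases and the induction closes with $v_b(N)=1+v_b(N/b)$. The paper instead fixes the summands $A_jb^{\ell_j}$, sorts them in increasing order of $b$-adic valuation, and adds them one at a time, proving by induction the bound $\lfloor\log_b S_i\rfloor\le\sum_{j\le i}(\lfloor\log_b A_j\rfloor+1)-1$ on the partial sums up to the first index $h$ where the incoming term's valuation exceeds $v_b(S_h)$, after which $v_b$ of the partial sums is frozen (since $v_b(n)<v_b(m)$ forces $v_b(n+m)=v_b(n)$). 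Both arguments hinge on the same heuristic -- the total number of base-$b$ digits of the coefficients controls how much cancellation into powers of $b$ can occur -- but your descent on the digit count avoids the sorting and the stabilization-index case split entirely and is arguably shorter, while the paper's bookkeeping of $\lfloor\log_b S_i\rfloor$ makes explicit that it is really the size of the partial sums, not just their valuation, that is being controlled. Your side remarks (keeping monomials with equal exponents $\sum_\ell i_\ell k_\ell$ separate, which matches the right-hand side and, by subadditivity, can only be weaker than the collected bound; and invoking Proposition~\ref{polystruct} for the nonzero constant term of a low-defect polynomial) are also accurate.
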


\begin{proof}
The number $f(b^{k_1},\ldots,b^{k_r})$ is the sum of the constant term
$a_{0,\ldots,0}$ (call it simply $A_0$) and numbers of the form $A_i b^{\ell_i}$
where the $A_i$ are simply the remaining $a_{i_1,\ldots,i_r}$ enumerated in some
order (say $1\le i\le s$).  Since we can choose the order, assume that $v_b(A_1
b^{\ell_1})\le \ldots \le v_b(A_s b^{\ell_s})$.

So consider forming the number $f(b^{k_1},\ldots,b^{k_r})$ by starting with
$A_0$ and adding in the numbers $A_i b^{\ell_i}$ one at a time.  Let $S_i$
denote the sum $\sum_{j=0}^i A_j b^{\ell_j}$, so $S_0=A_0$ and
$S_s=f(b^{k_1},\ldots,b^{k_r})$.  We check that for any $i$, we have
\begin{equation}
\label{vineq}
v_b(S_i) \le \sum_{j=0}^i (\lfloor \log_b A_j \rfloor +1) - 1.
\end{equation}

Before proceeding further, we observe that if for some $i$ we have $v_b(A_{i+1}
b^{\ell_{i+1}})>v_b(S_i)$, then by assumption, for all $j>i$, $v_b(A_j
b^{\ell_j})\ge v_b(A_{i+1} b^{\ell_{i+1}})>v_b(S_i)$.  Now in general, if
$v_b(n)<v_b(m)$, then $v_b(n+m)=v_b(n)$.  So we can see by induction that for
all $j\ge i$, $v_b(S_j)=v_b(S_i)$: This is true for $j=i$, and if it is true for
$j$, then $v_b(S_j)=v_b(S_i)<v_b(A_j b^{\ell_j})$ and so
$v_b(S_{j+1})=v_b(S_i)$.

So let $h$ be the smallest $i$ such that $v_b(A_{i+1} b^{\ell_{i+1}})>v_b(S_i)$.
(If no such $i$ exists, take $h=s$.)  Then we first prove that
Equation~\eqref{vineq} holds for $i\le h$.

In the case that $i\le h$, we will in fact prove the stronger statement that
\[ \lfloor \log_b S_i \rfloor \le
	\sum_{j=0}^i (\lfloor \log_b A_j \rfloor + 1) - 1;\]
this is stronger as in general it is true that $v_b(n)\le \lfloor \log_b n
\rfloor$.  For $i=0$ this is immediate.  So suppose that this is true for $i$
and we want to check it for $i+1$, with $i+1\le h$.  Since $i+1\le h$, we have
that $v_b(A_{i+1} b^{\ell_{i+1}})\le v_b(S_i)$.  From this we can conclude the
inequality
\begin{eqnarray*}
\lfloor \log_b (A_{i+1} b^{\ell_{i+1}}) \rfloor &
= & \ell_{i+1} + \lfloor \log_b A_{i+1} \rfloor \\
\le v_b(A_{i+1} b^{\ell_{i+1}}) + \lfloor \log_b A_{i+1} \rfloor &
\le & v_b(S_i) + \lfloor \log_b A_{i+1} \rfloor.
\end{eqnarray*}

Now, we also know that
\begin{equation}
\label{twowayineq}
\lfloor \log_b S_{i+1} \rfloor \le \max\{ \lfloor \log_b S_i \rfloor,
\lfloor \log_b (A_{i+1}b^{\ell_{i+1}}) \rfloor \} + 1.
\end{equation}
And we can observe using above that
\[ \lfloor \log_b (A_{i+1} b^{\ell_{i+1}}) \rfloor + 1 \le
\lfloor \log_b S_i \rfloor + \lfloor \log_b A_{i+1} \rfloor + 1\le
\sum_{j=0}^{i+1} (\lfloor \log_b A_j \rfloor + 1) - 1.
\]
We also know that
\[ \lfloor \log_b S_i \rfloor +1 \le
\sum_{j=0}^i (\lfloor \log_b A_j\rfloor+1) \le
\sum_{j=0}^{i+1} (\lfloor \log_b A_j \rfloor +1) -1,\]
as $\lfloor \log_b A_{i+1} \rfloor + 1\ge 1$.
So we can conclude using Equation~\eqref{twowayineq} that
\[ \lfloor \log_b S_{i+1} \rfloor \le
	\sum_{j=0}^{i+1} (\lfloor \log_b A_j \rfloor + 1) - 1,\]
as desired.

Having proved Equation~\eqref{vineq} for $i\le h$, it then immediately follows
for all $i$, as by the above, for $i\ge h$,
\[ v_b(S_i)=v_b(S_h)\le
	\sum_{j=0}^h (\lfloor \log_b A_j \rfloor +1) - 1 \le
	\sum_{j=0}^s (\lfloor \log_b A_j \rfloor +1) - 1;\]
this proves the claim.
\end{proof}

\subsection{Algorithm~\ref{anykalg}: Computing whether a polynomial $3$-represents some $3^kn$.}

With this in hand, we can now write down Algorithm~\ref{anykalg} for determining
if $f$ $3$-represents any $3^k n$:

\begin{algorithm}[ht]
\caption{Determine whether $(f,C)$ $3$-represents any $3^k n$ and with what
	complexities}
\label{anykalg}
\begin{algorithmic}
\REQUIRE $(f,C)$ a low-defect pair, $n$ a natural number
\ENSURE $S$ is the set of $(k,\ell)$ such that there exist whole numbers
$(k_1,\ldots,k_r)$ with $f(3^{k_1},\ldots,3^{k_r})=3^k n$ and
$C+3(k_1+\ldots+k_r)=\ell$
\STATE $S\gets \emptyset$
\STATE Determine $v$ such that for any $k_1,\ldots,k_r$, one has
	$v_3(f(3^{k_1},\ldots,3^{k_r}))\le v$ \COMMENT{one method is given by
	Lemma~\ref{maxv3}}
\FOR{$k=0$ \TO $v-v_3(n)$}
	\FORALL{$(k_1,\ldots,k_r)$ such that $k_1+\ldots+k_r\le k+\lfloor\log_3
n\rfloor$}
		\IF{$f(3^{k_1},\ldots,3^{k_r})=3^k n$}
			\STATE $S\gets S\cup \{(k,C+3(k_1+\ldots+k_r))\}$
		\ENDIF
	\ENDFOR
\ENDFOR
\RETURN $S$
\end{algorithmic}
\end{algorithm}

\begin{proof}[Proof of correctness for Algorithm~\ref{anykalg}]
Once we have picked a $v$ (which can be found using Lemma~\ref{maxv3}), it
suffices to check if $f$ represents $3^k n$ with $k+v_3(n)\le v$.  By
Proposition~\ref{polystruct}, for any $k_1,\ldots, k_r$, we have
\[ f(3^{k_1},\ldots,3^{k_r}) \ge 3^{k_1+\ldots+k_r}, \] 
and so it suffices to check it for tuples $(k_1,\ldots,k_r)$ with
$k_1+\ldots+k_r\le \lfloor \log_3 3^k n \rfloor$.  There are only finitely many
of these and so this can be done by brute force, and this is exactly what the
algorithm does.
\end{proof}

Note that Algorithm~\ref{anykalg} is for determining specifically if there is
some $k\ge 0$ such that $f$ $3$-represents $3^k n$; it is not for $k\le 0$.  In
order to complete the algorithms that follow, we will also need to be able to
check if there is some $k\le 0$ such that $f$ $3$-represents $3^k n$.  However,
this is the same as just checking if $\hat{f}$ $3$-represents $n$, and can be
done by the same brute-force methods as were used to check if $f$ $3$-represents
$n$; no special algorithm is required here.

\subsection{Algorithm~\ref{stabalg}: Algorithm to test stability and compute stable complexity}

Now, at last, we can write down Algorithm~\ref{stabalg}, for computing $K(n)$
and $\cpx{n}_\st$.  We assume that in addition to $n$, we are given $L$, an
upper bound on $\cpx{n}$, which may be $\infty$.  Running
Algorithm~\ref{stabalg} with $L=\infty$ is always a valid choice;
alternatively, one may compute $\cpx{n}$ or an upper bound on it before applying
Algorithm~\ref{stabalg}.

\begin{algorithm}[ht]
\caption{Compute $K(n)$ and $\cpx{n}_\st$}
\label{stabalg}
\begin{algorithmic}
\REQUIRE $n$ a natural number, $L\in\N\cup\{\infty\}$, $L\ge\cpx{n}$
\ENSURE $(k,m)=(K(n),\cpx{n}_\st)$
\STATE Choose a step size $\alpha\in (0,1)\cap \cR$
\STATE Let $r$ be the smallest nonnegative integer, or $\infty$, such that
	$r\alpha>L-3\log_3 n-1$
\STATE $i \gets 1$
\STATE $U\gets \emptyset$
\WHILE{$U=\emptyset$ \AND $i\le r$}
\IF{$i=1$}
	\STATE Let $\sS_1$ be the output of Algorithm 1 for $\alpha$
	\COMMENT {This is a good covering of $B_\alpha$}
\ELSE
	\STATE Use Algorithm~\ref{buildalg} to compute a covering $\sS_i$ of
	$B_{i\alpha}$ from coverings $\sS_j$ of $B_{j\alpha}$ for $1\le j<i$
	\STATE Use Algorithm~\ref{manytruncalg} to turn $\sS_i$ into a good
		covering
\ENDIF
\STATE Optional step: Remove redundancies from $\sS_i$ as in
Algorithm~\ref{buildalg} \COMMENT{See ``optional step'' there}
\FORALL{$(f,C)\in \sS_i$}
\STATE Let $U'$ be the output of Algorithm~\ref{anykalg} on $(f,C)$ and
$n$ \COMMENT{If $r$ is finite and $i<r$ this whole loop may be skipped}
\STATE Let $s=\deg f$
\FORALL{$(k_1,\ldots,k_{s+1})$ such that $k_1+\ldots+k_{s+1}\le \lfloor\log_3
n\rfloor$}
		\IF{$\xpdd{f}(3^{k_1},\ldots,3^{k_{s+1}})=n$}
			\STATE $U'\gets U'\cup \{(k,C+3(k_1+\ldots+k_{s+1}))\}$
		\ENDIF
\ENDFOR
\STATE $U\gets U\cup U'$
\ENDFOR
\ENDWHILE
\IF{$U=\emptyset$}
\STATE $(k,m)=(0,L)$
\ELSE
\STATE Let $V$ consist of the elements $(k,\ell)$ of $U$ that minimize $\ell-3k$
\STATE Choose $(k,\ell)\in V$ that minimizes $k$
\STATE $m\gets \ell-3k$
\ENDIF
\RETURN $(k,m)$
\end{algorithmic}
\end{algorithm}

\begin{proof}[Proof of correctness for Algorithm~\ref{stabalg}]
This algorithm progressively builds up good covers $\sS_i$ of $B_{i\alpha}$ until
it finds some $i$ such that there is some $(f,C)\in \sS_i$ such that $\xpdd{f}$
$3$-represents $3^k n$ for some $k\ge 0$.  To see that this is indeed what it is
doing, observe that if \[f(3^{k_1},\ldots,3^{k_r})3^{k_{r+1}}=3^k n,\] then if
$k\ge k_{r+1}$, we may write \[f(3^{k_1},\ldots,3^{k_r})=3^{k-k_{r+1}}n\] and so
$f$ itself $3$-represents some $3^k n$, while if $k\le k_{r+1}$, we may write
\[f(3^{k_1},\ldots,3^{k_r})3^{k_{r+1}-k}=n\] and so $\xpdd{f}$ $3$-represents
$n$ itself.  And this is exactly what the inner loop does; it checks if $f$
$3$-represents any $3^k n$ using Algorithm~\ref{anykalg}, and it checks if
$\xpdd{f}$ $3$-represents $n$ using brute force.

Now, if for a given $i$ we obtain $U=\emptyset$, then that means that no $3^k n$
is $3$-represented by any $(f,C)\in \sS_i$, and so for any $k$, $\dft(3^k n)\ge
i\alpha$, that is, $\dft_\st(3^k n)\ge i\alpha$.  Conversely, if for a given $i$
we obtain $U$ nonempty, then that means that some $3^k n$ is $3$-represented by
some $(f,C)\in \sS_i$.  Since for any $(f,C)$ we have $\dft(f,C)\le i\alpha$
(and this is strict if $\deg f=0$), this means that $\dft(3^k n)<i\alpha$, and
so $\dft_\st(n)<i\alpha$.

So we see that if the algorithm exits the main loop with $U$
nonempty, it does so once has found some $i$ such that there exists $k$ with
$\dft(3^k n)< i\alpha$; equivalently, once it has found some $i$ such that
$\dft_\st(n)<i\alpha$.  Or, equivalently, once it has found some $i$ such that
$\dft(3^{K(n)} n)<i\alpha$.  Furthermore, note that $3^{K(n)} n$ must be a
leader if $K(n)>0$, as otherwise $3^{K(n)-1} n$ would also be stable.  So if
$K(n)>0$, then $3^{K(n)} n$ must be efficiently $3$-represented by some
$(f,C)\in \sS_i$.  Whereas if $K(n)=0$, then we only know that it is efficiently
$3$-represented by some $(\xpdd{f},C)$ for some $(f,C)\in \sS_i$, but we also
know $3^{K(n)}n =n$.  That is to say, the ordered pair $(K(n),\cpx{3^{K(n)} n})$
must be in the set $U$.

In this case, where $U$ is nonempty, it remains to examine the set $U$ and pick
out the correct candidate.  Each pair $(k,\ell)\in U$ consists of some $k$ and
some $\ell$ such that $\ell\ge\cpx{3^k n}$.  This implies that
\[ \dft_\st(n)\le \dft(3^k n)\le \ell-3k-3\log_3 n, \]
and so the pair $(K(n),\cpx{3^{K(n)} n})$ must be a pair $(k,\ell)$ for which
the quantity $\ell-3k-3\log_3n$, and hence the quantity $\ell-3k$, is minimized;
call this latter minimum $p$.  So
\[ \dft_\st(n) = p-3\log_3 n. \]  (Note that this means that $p=\cpx{n}_\st$.)
Then the elements of $V$ are pairs $(k,p+3k)$ with
\[\dft(3^k n)\le p-3\log_3 n,\]
but we know also that
\[\dft(3^k n)\ge \dft_\st(n) = p-3\log_3 n,\]
so we conclude that for such a pair, $\dft(3^k n)=\dft_\st(n)$.  But this means
that $3^k n$ is stable, and so $k\ge K(n)$.  But we know that $K(n)$ is among
the set of $k$ with $(k,p+3k)\in V$, and so it is their minimum.  Thus, we can
select the element $(k,\ell)\in V$ that minimizes $k$; then $k=K(n)$, and we can
take $k-3\ell$ to find $m=\cpx{n}_\st$.

This leaves the case where $U$ is empty.  In this case, we must have that for
all $1\le i\le r$, and hence in particular for $i=r$, no $(f,C)$ in $\sS_i$
$3$-represents any $n 3^k$; i.e., no $n 3^k$ lies in $B_{r\alpha}$, and hence,
by Proposition~\ref{arbr}, no $n 3^k$ lies in $A_{r\alpha}$.  That is to say,
for any $k$, $\dft(n 3^k)\ge r\alpha$, and so
\[ \cpx{n 3^k} \ge r\alpha + 3\log_3 n + 3k > L + 3k - 1.\]
Since $\cpx{n 3^k}>L+3k-1$, and $\cpx{n 3^k}\le L+3k$, we must have $\cpx{n
3^k}=L+3k$.  Since this is true for all $k\ge 0$, we can conclude that $n$ is a
stable number.  So, $n$ is stable and $\cpx{n}=L$, that is to say, $K(n)=0$ and
$\cpx{n}_\st=\cpx{n}=L$.
\end{proof}

We have now proven Theorem~\ref{frontpagethm}:

\begin{proof}[Proof of Theorem~\ref{frontpagethm}]
Algorithm~\ref{stabalg}, run with $L=\infty$, gives us a way of computing $K(n)$
and $\cpx{n}_\st$.  Then, to check if $n$ is stable, it suffices to check
whether or not $K(n)=0$.  This proves the theorem.
\end{proof}

\subsection{Algorithm~\ref{genalg}: Determining leaders and the ``drop pattern''.}

But we're not done; we can go further.  As mentioned in
Section~\ref{introdetail}, we can get more information if we go until we detect
$n$, rather than stopping as soon as we detect some $3^k n$.  We now record
Algorithm~\ref{genalg}, for not only determining $K(n)$ and $\cpx{3^{K(n)} n}$,
but for determining all $k$ such that either $k=0$ or $3^k n$ is a leader, and
the complexities $\cpx{3^k n}$.  By Proposition~\ref{arbr}, this is enough to
determine $\cpx{3^k n}$ for all $k\ge 0$.  One could also do this by using
Algorithm~\ref{stabalg} to determine $K(n)$ and then directly computing
$\cpx{3^k n}$ for all $0\le k\le K(n)$, but Algorithm~\ref{genalg} will often be
faster.

\begin{algorithm}[ht]
\caption{Compute information determining $\cpx{3^k n}$ for all $k\ge 0$}
\label{genalg}
\begin{algorithmic}
\REQUIRE $n$ a natural number, $L\in\N\cup\{\infty\}$, $L\ge\cpx{n}$
\ENSURE $V$ the set of $(k,\ell)$ where either $k=0$ or $k>0$ and $3^k n$ is a
leader, and $\ell=\cpx{3^k n}$
\STATE Choose a step size $\alpha\in (0,1)\cap \cR$
\STATE Let $r$ be the smallest nonnegative integer, or $\infty$, such that
	$r\alpha>L-3\log_3 n-1$
\STATE $i \gets 1$
\STATE $U\gets \emptyset$
\WHILE{$0\notin \pi_1(U)$, where $\pi_1$ is projection onto the first
coordinate, \AND $i\le r$}
\IF{$i=1$}
	\STATE Let $\sS_1$ be the output of Algorithm 1 for $\alpha$
	\COMMENT {This is a good covering of $B_\alpha$}
\ELSE
	\STATE Use Algorithm~\ref{buildalg} to compute a covering $\sS_i$ of
	$B_{i\alpha}$ from coverings $\sS_j$ of $B_{j\alpha}$ for $1\le j<i$
	\STATE Use Algorithm~\ref{manytruncalg} to turn $\sS_i$ into a good
	covering
\ENDIF
\STATE Optional step: Remove redundancies from $\sS_i$ as in
Algorithm~\ref{buildalg} \COMMENT{See ``optional step'' there}
\FORALL{$(f,C)\in \sS_i$}
\STATE Determine $v$ such that for any $k_1,\ldots,k_r$, one has
	$v_3(f(3^{k_1},\ldots,3^{k_r}))\le v$ \COMMENT{one method is given by
	Lemma~\ref{maxv3}} \COMMENT{If $r$ is finite and $i<r$ this whole loop
may be skipped}
\STATE Let $U'$ be the output of Algorithm~\ref{anykalg} on $(f,C)$ and $n$
\FORALL{$(k_1,\ldots,k_{r+1})$ such that $k_1+\ldots+k_{r+1}\le \lfloor\log_3
n\rfloor$}
		\IF{$\xpdd{f}(3^{k_1},\ldots,3^{k_{r+1}})=n$}
			\STATE $U'\gets U'\cup \{(k,C+3(k_1+\ldots+k_{r+1}))\}$
		\ENDIF
\ENDFOR
\STATE $U\gets U\cup U'$
\ENDFOR
\ENDWHILE
\IF{$0\notin\pi_1(U)$}
\STATE $U\gets U\cup\{(0,L)\}$
\ENDIF
\STATE Let $V=\{(k,\ell-3k): (k,\ell)\in U\}$
\STATE Let $V_m$ consist of the minimal elements of $V$ in the usual partial
order
\STATE Let $W=\{(k,p+3k): (k,p)\in V_m\}$
\RETURN $W$
\end{algorithmic}
\end{algorithm}

\begin{proof}[Proof of correctness for Algorithm~\ref{genalg}]
As in Algorithm~\ref{stabalg}, we are successively building up good coverings
$\sS_i$ of $B_{i\alpha}$, and for each one checking whether there is an
$(f,C)\in \sS_i$ and a $k\ge 0$ such that $(\xpdd{f},C)$ $3$-represents $3^k n$.
However, the exit condition on the loop is different; ignoring for a moment the
possibility of exiting due to $i>r$, the difference is that instead of stopping
once some $3^k n$ is $3$-represented, we do not stop until $n$ itself is
$3$-represented, or equivalently, $\dft(n)<i\alpha$.  We'll use $i$ here to
denote the value of $i$ when the loop exits.

We want the set $U$ to have two properties: Firstly, it should contain all the
pairs $(k,\ell)$ we want to find.  Secondly, for any $(k,\ell)\in U$, we should
have $\cpx{3^k n}\le \ell$.  For the first property, observe that if $3^k n$ is
a leader and $k>1$, then \[\dft(3^k n)\le \dft(n)-1<L-3\log_3 n-1,\] and so
$\dft(3^k n)\le r\alpha$; thus, $3^k n$ (being a leader) is efficiently
$3$-represented by some $(f,C)\in \sS_r$, and so if the loop exits due to $i>r$,
then $(k,\cpx{3^k n})\in U$.  Whereas if the loop exits due to $0\in\pi_1(U)$,
then note $\dft(3^k n)\le\dft(n)<i\alpha$, and so $3^k n$ (again being a leader)
is efficiently $3$-represented by some $(f,C)\in S_i$, and so again $(k,\cpx{3^k
n})\in U$.  This leaves the case where $k=0$.  If the loop exits due to
$0\in\pi_1(U)$, then by choice of $i$, $n$ is efficiently $3$-represented by
some $(\xpdd{f},C)$ for some $(f,C)\in \sS_i$, so $(0,\cpx{n})\in U$.  Whereas
if the loop exits due to $i>r$, then this means that $\dft(n)\ge r\alpha$, and
so
\[ \cpx{n} \ge r\alpha + 3\log_3 n > L - 1;\]
since we know $\cpx{n}\le L$, this implies $\cpx{n}=L$, and so including $(0,L)$
in $U$ means $(0,\cpx{n})\in U$.

For the second property, again, there are two ways a pair $(k,\ell)$ may end up
in $U$.  One is that some low-defect pair $(f,C)$ $3$-represents the number $3^k
n$, which, as in the proof of correctness for Algorithm~\ref{stabalg}, means
$\cpx{3^k n}\le \ell$.  The other is that $(k,\ell)=(0,L)$; but in this case,
$\cpx{n}\le L$ by assumption.

It then remains to isolate the pairs we want from the rest of $U$.  We will show
that they are in fact precisely the minimal elements of $U$ under the partial
order

\[ (k_1,\ell_1) \le (k_2,\ell_2) \iff k_1\le k_2\ \textrm{and}\ 
	\ell_1-3k_1 \le \ell_2-3k_2. \]

Say first that $(k,\ell)$ is one of the pairs we are looking for, i.e, either
$k=0$ or $3^k n$ is a leader, and $\ell=\cpx{3^k n}$.  Now suppose that that
$(k',\ell')\in U$ such that $k'\le k$ and $\ell-3k'\le \ell-3k$.  Since
$(k',\ell')\in U$, that means that $\cpx{3^{k'}n}\le \ell'$.  Since $k'\le k$,
we conclude that
\begin{equation}
\label{alg9eq}
\ell=\cpx{3^k n}\le \ell'+3(k-k')
\end{equation}
and hence that $\ell-3k \le \ell'-3k'$, so $\ell-3k=\ell'-3k'$.
Now, if $k=0$, then certainly $k\le k'$ (and so $k=k'$); otherwise, $3^k n$ is a
leader.  Suppose we had $k'<k$; then since $3^k n$ is a leader, that would mean
$\dft(3^k n) < \dft(3^{k'} n)$ and hence
\[ \cpx{3^k n} < \cpx{3^{k'}n} + 3(k-k') = \ell+3(k-k'), \]
contrary to \eqref{alg9eq}.  So we conclude $k'=k$, and so $(k,\ell)$ is indeed
minimal.

Conversely, suppose that $(k,\ell)$ is a minimal element of $U$ in this partial
order.  We must show that $\ell=\cpx{3^k n}$, and, if $k>0$, that $3^k n$ is a
leader.  Choose $k'\le k$ as large as possible with either $k'=0$ or $3^{k'}n$ a
leader, so that $\dft(3^{k'}n)=\dft(3^k n)$.  Also, let $\ell'=\cpx{3^{k'} n}$;
by above, $(k',\ell')\in U$.  Since $(k,\ell)\in U$ and $\dft(3^{k'}n)=\dft(3^k
n)$, we know that
\[ \cpx{3^{k'} n}+3(k-k')=\cpx{3^k n}\le \ell \]
and hence $\ell'-3k' \le \ell-3k$.  Since by assumption we also have $k'\le k$,
by the assumption of minimality we must have $(k',\ell')=(k,\ell)$.  But this
means exactly that either $k=0$ or $3^k n$ is a leader, and that
\[ \cpx{3^k n}=\cpx{3^{k'} n}=\ell'=\ell, \]
as needed.
\end{proof}

\subsection{Algorithm~\ref{pow2alg}: Stabilization length and stable complexity for $n=2^k$.}

Finally, before moving on to the results of applying these algorithms, we make
note of one particular specialization of Algorithm~\ref{stabalg}, namely, the
case where $n=2^k$ and $\ell=2k$.  As was noted in Section~\ref{discussion1},
this turns out to be surprisingly fast as a method of computing $\cpx{2^k}$.  We
formalize it here:

\begin{algorithm}[ht]
\caption{Given $k\ge 1$, determine $K(2^k)$ and $\cpx{2^k}_\st$}
\label{pow2alg}
\begin{algorithmic}
\REQUIRE $k\ge1$ an integer
\ENSURE $(h,p)=(K(2^k),\cpx{2^k}_\st)$
\STATE Let $(h,p)$ be the result of applying Algorithm~\ref{stabalg} with
$n=2^k$ and $L=2k$.
\RETURN $(h,p)$
\end{algorithmic}
\end{algorithm}

\begin{proof}[Proof of correctness for Algorithm~\ref{pow2alg}]
This follows from the correctness of Algorithm~\ref{stabalg} and the fact that
$\cpx{2^k}\le 2k$ for $k\ge 1$.
\end{proof}

\section{Further notes on stabilization and stable complexity}
\label{secstab}

Before we continue on to the results of applying these algorithms, let's make a
few more notes on the stabilization length $K(n)$ and the stable complexity
$\cpx{n}_\st$, now that we have demonstrated how to compute them.  We begin
with the following inequality:

\begin{prop}
\label{stabcpxineq}
For natural numbers $n_1$ and $n_2$, $\cpx{n_1 n_2}_\st \le \cpx{n_1}_\st +
\cpx{n_2}_\st$.
\end{prop}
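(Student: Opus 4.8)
The plan is to reduce the statement to the ordinary submultiplicativity of $\cpx{\cdot}$ by passing to stable multiples of $n_1$ and $n_2$. First I would invoke part (5) of Theorem~\ref{oldprops} (equivalently Theorem~\ref{basicstab}) to choose exponents $k_1,k_2\ge 0$ such that $3^{k_1}n_1$ and $3^{k_2}n_2$ are both stable; by the definition of stable complexity this gives
\[ \cpx{n_1}_\st = \cpx{3^{k_1}n_1} - 3k_1 \quad\text{and}\quad \cpx{n_2}_\st = \cpx{3^{k_2}n_2} - 3k_2. \]

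Next I would apply the elementary inequality $\cpx{ab}\le\cpx{a}+\cpx{b}$ (immediate from the dynamic-programming recursion for $\cpx{\cdot}$, using the product case) with $a=3^{k_1}n_1$ and $b=3^{k_2}n_2$, noting that $ab = 3^{k_1+k_2}n_1 n_2$. This yields
\[ \cpx{3^{k_1+k_2}n_1 n_2} \le \cpx{3^{k_1}n_1} + \cpx{3^{k_2}n_2}. \]

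Finally I would use the characterization $\cpx{m}_\st = \min_{k\ge 0}(\cpx{3^k m}-3k)$ from part (3) of Proposition~\ref{stoldprops}, applied with $m = n_1 n_2$ and the particular exponent $k = k_1+k_2$, to get
\[ \cpx{n_1 n_2}_\st \le \cpx{3^{k_1+k_2}n_1 n_2} - 3(k_1+k_2). \]
Combining this with the previous display and the two expressions for $\cpx{n_1}_\st$ and $\cpx{n_2}_\st$ gives the claim. There is no serious obstacle here; the only points requiring care are that a stable multiple of each $n_i$ exists at all (which is exactly Theorem~\ref{basicstab}) and that $\cpx{\cdot}_\st$ is genuinely a minimum over \emph{all} $k\ge 0$ rather than merely the value at one stabilizing exponent, so that we are entitled to substitute $k = k_1+k_2$ even though $3^{k_1+k_2}n_1 n_2$ need not itself be the relevant stable multiple of $n_1 n_2$.
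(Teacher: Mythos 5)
Your proof is correct and follows essentially the same route as the paper: stabilize each factor via $k_1,k_2$, apply submultiplicativity of $\cpx{\cdot}$ to $3^{k_1}n_1\cdot 3^{k_2}n_2$, and compare with the stable complexity of the product. The only (cosmetic) difference is in the last step: the paper additionally arranges $K=k_1+k_2$ to be a stabilizing exponent for $n_1n_2$ so the definition of $\cpx{n_1n_2}_\st$ applies directly, while you instead invoke the characterization $\cpx{m}_\st=\min_{k\ge 0}(\cpx{3^k m}-3k)$, which is equally valid.
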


\begin{proof}
Choose $k_1$, $k_2$, and $K$ such that $k_1+k_2=K$, both $3^{k_i} n_i$ are
stable, and $3^K n_1 n_2$ is also stable.  Then
\[ \cpx{n_1 n_2}_\st = \cpx{3^K n_1 n_2}-3K \le \cpx{3^{k_1} n_1} + \cpx{3^{k_2}
n_2} - 3(k_1+k_2) = \cpx{n_1}_\st + \cpx{n_2}_\st. \]
\end{proof}

Unfortunately, the analogous inequality for addition does not hold; for
instance,
\[ \cpx{2}_\st=2>0=\cpx{1}_\st+\cpx{1}_\st; \]
more examples can easily be found.

As was mentioned in Section~\ref{discussion2}, we can measure the instability of
the number $n$ by the quantity $\drop(n)$, defined as
\[ \drop(n)=\cpx{n}-\cpx{n}_\st=\dft(n)-\dft_\st(n). \]

We can also measure of how far from optimal a factorization is -- and, due to
Proposition~\ref{stabcpxineq}, a stabilized version:

\begin{defns}
Let $n_1,\ldots, n_k$ be positive integers, and let $N$ be their product.  We
define $\badfac(n_1,\ldots,n_k)$ to be the difference
$\cpx{n_1}+\ldots+\cpx{n_r}-\cpx{N}$.  Similarly we define
$\badfac_\st(n_1,\ldots,n_k)$ to be the difference $\cpx{n_1}_\st + \ldots +
\cpx{n_k}_\st - \cpx{N}_\st$.

If $\badfac(n_1,\ldots,n_k)=0$, we will say that the factorization
$N=n_1\cdots n_k$ is a \emph{good factorization}.  If
$\badfac_\st(n_1,\ldots,n_k)=0$, we will say that the factorization
$N=n_1\cdots n_k$ is a \emph{stably good factorization}.
\end{defns}

These definitions lead to the following easily-proved but useful equation:

\begin{prop}
\label{dropbadness}
Let $n_1,\ldots,n_k$ be natural numbers with product $N$.  Then
\[ \drop(N) + \badfac(n_1,\ldots,n_k) =
	\sum_{i=1}^k \drop(n_i) + \badfac_\st(n_1,\ldots,n_k) .\]
\end{prop}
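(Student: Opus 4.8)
The plan is to prove this by simply expanding both sides using the definitions of $\drop$, $\badfac$, and $\badfac_\st$, and observing that they reduce to the same expression. First I would rewrite the left-hand side: substituting $\drop(N) = \cpx{N} - \cpx{N}_\st$ and $\badfac(n_1,\ldots,n_k) = \cpx{n_1} + \ldots + \cpx{n_k} - \cpx{N}$, the two occurrences of $\cpx{N}$ cancel, leaving
\[ \drop(N) + \badfac(n_1,\ldots,n_k) = \sum_{i=1}^k \cpx{n_i} - \cpx{N}_\st. \]
Next I would do the same for the right-hand side: writing $\drop(n_i) = \cpx{n_i} - \cpx{n_i}_\st$ and $\badfac_\st(n_1,\ldots,n_k) = \cpx{n_1}_\st + \ldots + \cpx{n_k}_\st - \cpx{N}_\st$, the stable complexities $\cpx{n_i}_\st$ cancel in pairs, yielding the same quantity
\[ \sum_{i=1}^k \drop(n_i) + \badfac_\st(n_1,\ldots,n_k) = \sum_{i=1}^k \cpx{n_i} - \cpx{N}_\st. \]
Since both sides equal $\sum_{i=1}^k \cpx{n_i} - \cpx{N}_\st$, the identity follows; indeed the cleanest presentation is the single chain of equalities through this common middle term.

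There is no real obstacle here: the proposition is a purely formal rearrangement of definitions, and the only point requiring any care is matching the cancellations correctly — the $\cpx{N}$ terms on the left, and the $\cpx{n_i}_\st$ terms on the right. Note that Proposition~\ref{stabcpxineq} is \emph{not} needed in the argument itself; it is only what guarantees that $\badfac_\st(n_1,\ldots,n_k) \ge 0$, making it a meaningful "badness" quantity, but the equation of Proposition~\ref{dropbadness} holds regardless of any sign considerations. So I would present the proof as the two-line computation above and not invoke any earlier result beyond the relevant definitions.
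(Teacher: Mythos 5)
Your proposal is correct and is essentially identical to the paper's own proof, which also observes that both sides equal $\sum_{i=1}^k \cpx{n_i} - \cpx{N}_\st$. Your added remark that Proposition~\ref{stabcpxineq} is only needed for nonnegativity, not for the identity itself, is accurate.
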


\begin{proof}
Both sides are equal to the difference $\sum_{i=1}^k \cpx{n_i} - \cpx{N}_\st$.
\end{proof}

The usefulness of this equation comes from the fact that all the summands are
nonnegative integers.  For instance, we can obtain the following implications
from it:

\begin{cor}
Let $n_1,\ldots,n_k$ be natural numbers with product $N$; consider the
factorization $N=n_1\cdot \ldots\cdot n_k$.  Then:
\begin{enumerate}
\item If $N$ is stable and the factorization is good, then the $n_i$ are stable.
\item If the $n_i$ are stable and the factorization is stably good, then $N$ is
stable.
\item If the factorization is stably good, then $K(N)\le \sum_i K(n_i)$.
\end{enumerate}
\end{cor}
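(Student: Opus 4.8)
The plan is to derive all three parts from Proposition~\ref{dropbadness} together with the fact, already noted there, that every summand in that identity is a nonnegative integer. Throughout, write $N=n_1\cdots n_k$, abbreviate $\badfac:=\badfac(n_1,\ldots,n_k)$ and $\badfac_\st:=\badfac_\st(n_1,\ldots,n_k)$, and recall
\[ \drop(N)+\badfac=\sum_{i=1}^k\drop(n_i)+\badfac_\st. \]

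For part (1), I would observe that the hypotheses say exactly $\drop(N)=0$ (since $N$ is stable; Proposition~\ref{stoldprops}(6)) and $\badfac=0$ (the factorization is good), so the left-hand side vanishes. As each of $\drop(n_1),\ldots,\drop(n_k),\badfac_\st$ is a nonnegative integer, they must all vanish; in particular every $\drop(n_i)=0$, i.e.\ every $n_i$ is stable. Part (2) is the same computation read in the other direction: $\drop(n_i)=0$ for all $i$ and $\badfac_\st=0$ force $\drop(N)=0$, i.e.\ $N$ is stable.

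For part (3), the plan is to reduce to part (2). Put $K_i:=K(n_i)$ and $K:=\sum_iK_i$, and consider the factorization $3^KN=\prod_i\bigl(3^{K_i}n_i\bigr)$. Each factor $3^{K_i}n_i$ is stable by definition of $K_i$, so by part (2) it would suffice to know this factorization is again stably good. Here is the one point that needs a moment's care: I claim $\badfac_\st$ is unchanged when each factor is multiplied by a power of $3$. Indeed, straight from the definition of stable complexity one gets $\cpx{3^{m}n}_\st=\cpx{n}_\st+3m$, so
\[ \badfac_\st\bigl(3^{K_1}n_1,\ldots,3^{K_k}n_k\bigr)=\sum_i\bigl(\cpx{n_i}_\st+3K_i\bigr)-\bigl(\cpx{N}_\st+3K\bigr)=\sum_i\cpx{n_i}_\st-\cpx{N}_\st=\badfac_\st=0, \]
using the hypothesis that the original factorization is stably good. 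Part (2), applied to the numbers $3^{K_i}n_i$, then gives that $3^KN$ is stable, whence $K(N)\le K=\sum_iK(n_i)$.

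I do not anticipate any real obstacle; the bookkeeping with the definitions of $\badfac$ and $\badfac_\st$ is the only thing to be careful about, and the substantive observation is the shift-invariance of $\badfac_\st$ used in part (3). If one prefers to avoid part (2) here, part (3) can also be done directly: $\cpx{3^KN}\le\sum_i\cpx{3^{K_i}n_i}=\cpx{N}_\st+3K$, which together with $\cpx{3^KN}\ge\cpx{N}_\st+3K$ (Proposition~\ref{stoldprops}(3)) yields $\dft(3^KN)=\dft_\st(3^KN)$ and hence stability via Proposition~\ref{stoldprops}(5) --- but routing through part (2) is the cleaner write-up.
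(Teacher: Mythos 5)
Your proposal is correct and follows essentially the same route as the paper: parts (1) and (2) read off directly from Proposition~\ref{dropbadness} using nonnegativity of the summands, and part (3) reduces to part (2) via the factorization $3^KN=\prod_i 3^{K_i}n_i$ with $K=\sum_i K(n_i)$. Your explicit verification that $\badfac_\st$ is unchanged when factors are multiplied by powers of $3$ (via $\cpx{3^m n}_\st=\cpx{n}_\st+3m$) is a point the paper asserts without comment, so including it is a small but welcome elaboration rather than a different argument.
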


(Part (1) of this proposition also appeared as Proposition~24 in \cite{paper1}.)

\begin{proof}
For part (1), by Proposition~\ref{dropbadness}, if
$\drop(N)=\badfac(n_1,\ldots,n_k)=0$, then we must have that $\drop(n_i)=0$ for
all $i$, i.e., the $n_i$ are all stable.  For part (2), again by
Proposition~\ref{dropbadness}, if $\badfac_\st(n_1,\ldots,n_k)=0$ and
$\drop(n_i)=0$ for all $i$, then we must have $\drop(N)=0$, i.e., $N$ is stable.
Finally, for part (3) let $K_i = K(N_i)$, and let $K=K_1+\ldots+K_r$.  Then
$\prod_i (3^{K_i}n_i)=3^K n$.  Now by hypothesis,
\[\badfac_\st(3^{K_1}n_1,\ldots,3^{K_r}n_r)=\badfac_\st(n_1,\ldots,n_r)=0,\]
and furthermore each $3^{K_i}n_i$ is stable.  Hence by part (2), we must also
have that $3^K N$ is stable, that is, that $K(N)\le K=K(N_1)+\ldots+K(N_r)$.
\end{proof}

Having noted this, let us now continue on towards the results of actually
performing computations with these algorithms.

\section{Results of computation}
\label{secresults}

Armed with our suite of algorithms, we now proceed to the results of our computations.
We can use Algorithm~\ref{pow2alg} to prove Theorem~\ref{frontpage2comput}:

\begin{proof}[Proof of Theorem~\ref{frontpage2comput}]
Algorithm~\ref{pow2alg} was applied with $k=\NUM$, and it was determined that
$K(2^\NUM)=0$ and $\cpx{2^\NUM}_\st=\TWONUM$, that is to say, that $2^\NUM$ is
stable and $\cpx{2^\NUM}=\TWONUM$, that is to say, that $\cpx{2^\NUM
3^\ell}=\TWONUM+3\ell$ for all $\ell\ge 0$.  This implies that $\cpx{2^k
3^\ell}=2k+3\ell$ for all $0\le k\le \NUM$ and $\ell\ge 0$ with $k$ and $\ell$
not both zero, as if one instead had $\cpx{2^k 3^\ell}<2k+3\ell$, then writing
$2^\NUM 3^\ell = 2^{\NUM-k}(2^k 3^\ell)$, one would obtain $2^\NUM
3^\ell<\TWONUM+3\ell$.
\end{proof}

But we can do more with these algorithms than just straightforward computation
of values of complexities and stable complexities.  For instance, we can answer
the question: What is the smallest unstable defect other than $1$?

In \cite{paper1} it was determined that
\begin{thm}
\label{12d2stab}
For any $n>1$, if $\dft(n)<12\dft(2)$, then $n$ is stable.
\end{thm}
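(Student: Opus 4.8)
The plan is to prove the contrapositive: if $n>1$ is unstable, then $\dft(n)\ge 12\dft(2)$. I would in fact establish the sharper statement that an unstable $n>1$ has $\dft_\st(n)\ge 1$; combined with Theorem~\ref{oldprops}(2) this forces $\dft(n)\ge \dft_\st(n)+1\ge 2$, and since $12\dft(2)=24-36\log_3 2<2$ (equivalently $3^{11}=177147<262144=2^{18}$) the desired conclusion follows.

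So suppose $n>1$ is unstable. By Proposition~\ref{stoldprops}(5), $\dft_\st(n)<\dft(n)$; and writing $\dft_\st(n)=\dft(3^{K(n)}n)$, Theorem~\ref{oldprops}(2) shows that $\dft(n)-\dft_\st(n)$ is a \emph{positive} integer. Put $\alpha:=\dft_\st(n)$; the task is to rule out $\alpha<1$. If $\alpha=0$ then $\dft(n)\in\Z_{>0}$, so by Theorem~\ref{oldprops}(7) we get $\dft(n)=1$ and hence $n=1$, contradicting $n>1$.

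The remaining case $0<\alpha<1$ is where I expect the only real content to sit, and I would handle it by feeding $\alpha$ into the explicit classification of Theorem~\ref{finite}. Let $m$ be the smallest number of defect $\alpha$. Scanning the seven families in Theorem~\ref{finite}, the only leaders of defect below $1$ that are divisible by $3$ are the powers $3^\ell$, which have defect $0$; since $\alpha>0$, this forces $3\nmid m$. Now $\dft(n)-\dft(m)=\dft(n)-\alpha$ is a positive integer, so Theorem~\ref{oldprops}(4) gives $n=m\cdot 3^{k'}$ for some integer $k'$; since $3\nmid m$ while $n$ is a natural number, $k'\ge 0$. But then Theorem~\ref{oldprops}(2) gives $\dft(n)\le\dft(m)=\alpha<\dft(n)$, a contradiction. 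This disposes of the case $\alpha<1$, completing the proof.

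The main obstacle, such as it is, is the observation that a leader of defect strictly between $0$ and $1$ is never divisible by $3$; but this is not genuinely difficult, being read off directly from the list in Theorem~\ref{finite}, and the rest of the argument is routine bookkeeping with parts (2), (4), (7) of Theorem~\ref{oldprops} together with the one-line inequality $3^{11}<2^{18}$. I note in passing that the argument in fact shows that every $n>1$ with $\dft(n)<2$ is stable, so the constant $12\dft(2)$ is far from sharp; one would retain it only to match the statement recorded in \cite{paper1}.
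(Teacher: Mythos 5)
Your proposal is correct, but there is nothing in this paper to compare it to: the paper does not prove Theorem~\ref{12d2stab} at all, it simply imports it from \cite{paper1}. What you have done is give a complete derivation inside the toolkit this paper quotes, namely the classification of Theorem~\ref{finite} together with Theorem~\ref{oldprops} (2), (4), (7) and Proposition~\ref{stoldprops} (5). The two load-bearing observations are sound: for unstable $n$ the difference $\dft(n)-\dft_\st(n)=\dft(n)-\dft(3^{K(n)}n)$ is a positive integer, and a leader of defect in $(0,1)$ is never divisible by $3$, since within each family of Theorem~\ref{finite} the defect is independent of $\ell$, so dividing a multiple of $3$ in the list by $3$ preserves the defect; your contradiction $\dft(n)\le\dft(m)=\alpha<\dft(n)$ and the numerical check $2^{18}>3^{11}$, i.e.\ $12\dft(2)<2$, are both right. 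Indeed you prove the stronger statement that every unstable $n>1$ has $\dft_\st(n)\ge 1$ and hence $\dft(n)\ge 2$, which is consistent with (and much weaker than) the computational Theorem~\ref{smallunstab}, where the true minima are $\dft(107)=3.23\ldots$ and $\dft_\st(683)=2.17\ldots$. Two minor remarks. In the case $\alpha=0$, the step ``$\dft(n)=1$ hence $n=1$'' needs one extra line (Theorem~\ref{oldprops} (7) only says which integers occur as defects), e.g.\ apply part (4) with $m=1$, or argue directly that $\dft_\st(n)=0$ forces $3^{K(n)}n$, and hence $n$, to be a power of $3$, which is stable unless $n=1$. And the independence of your proof is only as good as Theorem~\ref{finite}: that classification, including its complexity formulas valid for all $\ell$ (which is exactly what makes every number of defect less than $1$ stable), comes from the same source \cite{paper1} as the theorem being proved, so there this deduction is essentially the intended one rather than a new route; within the present paper, however, it is a legitimate, self-contained argument, and it even sharpens the constant from $12\dft(2)=1.28\ldots$ to $2$.
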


That is to say, with the exception of $1$, all defects less than $12\dft(2)$ are
stable.  This naturally leads to the question, what is the smallest unstable
defect (other than $1$)?  We might also ask, what is the smallest unstable
number (other than $1$)?  Interestingly, among unstable numbers greater than
$1$, the number $107$ turns out to be smallest both by magnitude and by defect.
However, if we measure unstable numbers (other than $1$) by their unstable
defect, the smallest will instead turn out to be $683$.  We record this in the
following theorem:

\begin{thm}
\label{smallunstab}
We have:
\begin{enumerate}
\item The number $107$ is the smallest unstable number other than $1$.
\item Other than $1$, the number $107$ is the unstable number with the smallest
defect, and $\dft(107)=3.2398\ldots$ is the smallest unstable defect other than
$1$.
\item Among nonzero values of $\dft_\st(n)$ for unstable $n$, $\dft_\st(683)$,
or $\dft(2049)=2.17798\ldots$, is the smallest.
\end{enumerate}
\end{thm}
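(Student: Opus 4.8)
The plan is to reduce all three parts to finite computations carried out by the algorithms just developed. For part (1), I would invoke Theorem~\ref{12d2stab}: every $n>1$ with $\dft(n)<12\dft(2)$ is stable, so any unstable number other than $1$ satisfies $\dft(n)\ge 12\dft(2)$. Since $\dft(n)=\cpx{n}-3\log_3 n$ and $\cpx{n}\le\frac{3}{\log 2}\log n$, a lower bound on $\dft(n)$ together with the elementary bounds on $\cpx{n}$ translates into a finite range of candidate values of $n$ to check directly. More efficiently, I would simply run Algorithm~\ref{stabalg} (equivalently, compute $K(n)$) on each $n$ in increasing order up through $107$, confirming that $K(n)=0$ for all $1<n<107$ and that $K(107)>0$; the last fact is the already-known instability witnessed by $\cpx{107}=16$, $\cpx{321}=18$.

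For part (2), I would again use Theorem~\ref{12d2stab} to see that any unstable $n>1$ has $\dft(n)\ge 12\dft(2)$, and then observe that the \emph{set of defects} is well-ordered (as recalled in the discussion in Section~\ref{discussion2}), so ``smallest unstable defect other than $1$'' is well-defined. To pin it down as $\dft(107)$, I would enumerate all leaders $n$ with $12\dft(2)\le\dft(n)\le\dft(107)$ — a finite set, producible via Algorithm~\ref{mainalg} applied with $r=\dft(107)$ (or slightly above), since a good covering of $B_r$ yields, after substituting powers of $3$ and checking, every leader of defect $<r$ — and run the stability test on each. The claim is that $107$ is the unique unstable one in this range, and that no unstable number other than $1$ has defect strictly below $\dft(107)$; the numerical value $\dft(107)=16-3\log_3 107=3.2398\ldots$ is then just an evaluation. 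That $107$ is simultaneously the smallest unstable number by magnitude follows from part (1).

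For part (3), I would use the characterization of $\dft_\st$ from Proposition~\ref{stoldprops}: $\dft_\st(n)$ is the least defect congruent to $\dft(n)\bmod 1$, and $\dft_\st(n)<\dft(n)$ exactly when $n$ is unstable. Since $683\cdot 3 = 2049$ and one computes $\cpx{2049}$ directly, $\dft_\st(683)=\dft(2049)=\cpx{2049}-3\log_3 2049$, and one checks $683$ is unstable by verifying $K(683)>0$ (or $\dft_\st(683)<\dft(683)$) using Algorithm~\ref{stabalg}. To see this stable defect is minimal among nonzero $\dft_\st(n)$ over unstable $n$, I would again exploit well-ordering of the image of $\dft_\st$ together with Theorem~\ref{12d2stab}: any unstable $n>1$ has $\dft_\st(n)\ge 12\dft(2)$ as well (applying the theorem to $3^{K(n)}n$), so only finitely many leaders have $\dft_\st$ in the window $[12\dft(2),\dft(2049)]$; enumerating them via a good covering of $B_{\dft(2049)+\varepsilon}$ (Algorithm~\ref{mainalg}) and testing each with Algorithm~\ref{genalg} to read off $\dft_\st$, one confirms $\dft(2049)$ is the smallest. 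The main obstacle I anticipate is not conceptual but practical: ensuring the good covering used in parts (2) and (3) is large enough to capture \emph{every} relevant leader — one must be careful that $r$ is chosen strictly above the target defect and that the enumeration of substitutions $f(3^{k_1},\ldots,3^{k_r})$ is exhaustive over the finitely many tuples with bounded exponent sum, exactly as in the correctness proof of Algorithm~\ref{anykalg} — since an incomplete enumeration could spuriously miss a smaller unstable defect.
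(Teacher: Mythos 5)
Your part (1) matches the paper: check $2$ through $106$ with Algorithm~\ref{stabalg} and use the known instability of $107$. But parts (2) and (3) contain a genuine gap: the sets you propose to ``enumerate and test'' are not finite. The set of leaders $n$ with $\dft(n)\le\dft(107)$ (or with $\dft_\st(n)\le\dft(2049)$) is infinite -- already the family $3^k+1$ gives infinitely many leaders of defect below $1$, and similar one-parameter families (e.g.\ with defects accumulating at values like $1+3\dft(2)>12\dft(2)$) give infinitely many leaders inside your window $[12\dft(2),\dft(107)]$, so the lower bound from Theorem~\ref{12d2stab} does not rescue finiteness. A good covering produced by Algorithm~\ref{mainalg} is a finite set of \emph{polynomials}, but each polynomial $3$-represents infinitely many numbers as the exponents of $3$ range over $\N$; there is no ``bounded exponent sum'' here, because you are not testing whether a fixed $n$ is represented (as in Algorithm~\ref{anykalg}) but trying to list all represented leaders. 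So ``run the stability test on each leader in the range'' is not a finite computation, and the argument as written does not close. A secondary problem: your claim that any unstable $n>1$ has $\dft_\st(n)\ge 12\dft(2)$ ``by applying the theorem to $3^{K(n)}n$'' is unjustified -- $3^{K(n)}n$ is stable by definition, and Theorem~\ref{12d2stab} gives stability as a conclusion, not a lower bound on the defect of a number already known to be stable. (The inequality happens to be true a posteriori, but you cannot use it to set up the search.)

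The paper's proof avoids the infinitude by changing what is enumerated. If $n$ is unstable, then $3^{K(n)}n$ is a leader \emph{divisible by $3$} with $\dft(3^{K(n)}n)=\dft_\st(n)\le\dft(n)-1$; so it suffices to classify leaders divisible by $3$ of small defect. From a good covering of $B_{21\dft(2)}$ one extracts all multiples of $3$ that the finitely many low-defect polynomials can $3$-represent: a finite sporadic list together with finitely many one-parameter families such as $12\cdot3^k+3$. The sporadic values are checked directly (only $3$, $321$, $2049$ are leaders), and each infinite family is killed uniformly: dividing by $3$ gives numbers of defect below $12\dft(2)$, hence stable by Theorem~\ref{12d2stab}, so no member of the family is a leader. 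This reduces everything to the three leaders $3$, $321$, $2049$, i.e.\ to the unstable numbers $1$, $107$, $683$, and parts (2) and (3) then follow from the arithmetic relations $\dft(107)=\dft(321)+1$, $\dft(683)=\dft(2049)+2$, and $\dft_\st(n)\le\dft(n)-1$ for unstable $n$. If you want to keep your more direct strategy, you would need to supply an analogous device for disposing of the infinitely many leaders in your defect window all at once; testing them one at a time cannot work.
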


\begin{proof}
For part (1), it suffices to use Algorithm~\ref{stabalg} to check the stability
of all numbers from $2$ to $106$.

For parts (2) and (3), in order to find unstable numbers of small defect, we
will search for leaders of small defect which are divisible by $3$.  (Since if
$n$ is unstable, then $3^{K(n)} n$ is a leader divisible by $3$, and
$\dft(3^{K(n)} n)<\dft(n)$).  We use Algorithm~\ref{mainalg} to compute a good
covering $S$ of $B_{21\dft(2)}$.  Doing a careful examination of the low-defect
polynomials that appear, we can determine all the multiples of $3$ that each one
can $3$-represent; we omit this computation, but its results are that the
following multiples of $3$ can be $3$-represented:
$3$, $6$, $9$, $12$, $15$, $18$, $21$, $24$, $27$, $30$, $33$, $36$, $39$, $42$,
$45$, $48$, $54$, $57$, $60$, $63, 66$, $72$, $75$, $78$, $81$, $84$, $90$,
$96$, $111$, $114$, $120$, $126$, $129$, $132$, $144$, $162$, $165, 168$, $171$,
$180$, $192$, $225$, $228$, $231$, $240$, $252$, $258$, $264$, $288$, $321$,
$324$, $330$, $336, 360$, $384$, $480$, $513$, $516$, $528$, $576$, $768$,
$1026$, $1032$, $1056$, $1152$, $1536$, $2049$, $2052$, $2064$, $2112$, $2304$,
$3072$, and, for $k\ge 0$, numbers of the forms $12\cdot 3^k+3$,
$6\cdot 3^k+3$, $9\cdot3^k+3$, $12\cdot3^k+6$, and $18\cdot3^k+6$.

For the individual leaders, we can easily check by computation that the only
ones which are leaders are $3$, $321$, and $2049$.  This leaves the infinite
families.  For these, observe that if we divide them by $3$, we get,
respectively, $4\cdot 3^k+1$, $2\cdot 3^k+1$, $3\cdot3^k+1$, $2(2\cdot3^k+1)$,
and $2(3\cdot3^k+1)$, and it is easy to check that any number of any of those
forms has defect less than $12\dft(2)$ and hence is stable by
Theorem~\ref{12d2stab}; thus, multiplying them by $3$ cannot yield a leader.

So we conclude that the only leaders $m$ with $\dft(m)<21\dft(2)$ are $3$,
$321$, and $2049$.  Therefore, the only unstable numbers $n$ with
$\dft_\st(n)<21\dft(2)$ are $1$, $107$, and $683$.  Note also that by the above
computation, no power of $3$ times any of $3$, $321$, or $2049$ is a leader (as
it would have to have smaller defect and would thus appear in the list), and
thus the numbers $3$, $321$, and $2049$ are not just leaders but in fact stable
leaders.  So to prove part (3), it suffices to note that, since $\dft_\st(3)=0$,
among $\dft_\st(107)$ (i.e.~$\dft(321)$) and $\dft_\st(683)$
(i.e.~$\dft(2049)$), the latter is smaller.

This leaves part (2).  Observe that $\dft(107)=\dft(321)+1$.  And if $n$ is
unstable, then $\dft_\st(n)\le \dft(n)-1$.  So if $n>1$ is unstable and
$\dft(n)<\dft(107)$, then $\dft_\st(n)<\dft(321)$, which by the above forces
$n=683$.  But in fact, although $\dft(2049)<\dft(107)$, we nonetheless have
$\dft(683)>\dft(107)$ (because while $\dft(107)=\dft(321)+1$,
$\dft(683)=\dft(2049)+2$).  Thus $\dft(107)$ is the smallest unstable defect
other than $1$, i.e., $107$ is (other than $1$) the smallest unstable number by
defect.
\end{proof}

These computational results  provide a good demonstration of the power of the
methods here.

\subsection*{Acknowledgements}
The author is grateful to J.~Arias de Reyna for helpful discussion. He thanks
his advisor J.~C.~Lagarias for help with editing and further discussion.
Work of the author was supported by NSF grants DMS-0943832 and DMS-1101373.

\appendix

\section{Implementation notes}
\label{impnotes}

In this appendix we make some notes about the attached implementation of the
above algorithms and on other ways they could be implemented.

We have actually not implemented Algorithm~\ref{stabalg} and
Algorithm~\ref{genalg} in full generality, where $L$ may be arbitrary; we have
only implemented the case where $L=\infty$, the case where $L=\cpx{n}$ (computed
beforehand), and the case of Algorithm~\ref{pow2alg}.

As was mentioned in Section~\ref{discussion1}, the step size in the attached
implementation has been fixed at $\alpha=\dft(2)$, with the sets $B_\alpha$ and
$T_\alpha$ precomputed.  Other integral multiples of $\dft(2)$ were tried, up to
$9\dft(2)$ (since $10\dft(2)>1$ and thus is not a valid step size), but these
all seemed to be slower, contrary to the author's expectation.

Another variation with a similar flavor is that one could write a version of
these algorithms with nonstrict inequalities, computing numbers $n$ with
$\dft(n)\le r$ for a given $r$, rather than $\dft(n)<r$; see Appendix~A of
\cite{theory}.  We may define:
\begin{defn}
For a real number $r\ge 0$, the set $\overline{A}_r$ is the set $\{ n\in\N:
\dft(n)\le r\}$.  The set $\overline{B}_r$ is the set of all elements of
$\overline{A}_r$ which are leaders.
\end{defn}
\begin{defn}
A finite set $\sS$ of low-defect pairs will be called a \emph{covering set} for
$\overline{B}_r$ if, for every $n\in \overline{B}_r$, there is some low-defect
pair in $\sS$ that efficiently $3$-represents it.  We will say $\sS$ is a
\emph{good covering} of $\overline{B}_r$ if, in addition, every $(f,C)\in\sS$
satisfies $\dft(f,C)\le r$.
\end{defn}
Then, as per Appendix~A of \cite{theory}, good coverings of $\overline{B}_r$
exist, and only slight variations on the algorithms above are needed to compute
them.  However, this was not tried in this implementation.

It is also worth noting that the check for whether a given polynomial $f$
$3$-represents a given number $n$ can also be sped up.  If $f$ is a low-defect
polynomial with leading coefficent $a$, maximum coefficient $A$, and $N$ terms,
then
\[ a3^{k_1+\ldots+k_r} \le f(3^{k_1},\ldots,3^{k_r}) \le NA3^{k_1+\ldots+k_r},\]
so we only need to search $(k_1,\ldots,k_r)$ with
\[ \lceil \log_3 \frac{n}{NA} \rceil \le k_1+\ldots+k_r \le
	\lfloor \log_3 \frac{n}{a} \rfloor, \]
a stricter condition than was described in the algorithms above.  This
improvement is, in fact, used in the attached implementation.  It is also
possible that there is a better way than brute force.

As was mentioned in Section~\ref{secmorealgos}, when running
Algorithm~\ref{stabalg} or Algorithm~\ref{genalg} with $L$ finite, one can omit
the $3$-representation check at intermediate steps.  We have only implemented
this variant for Algorithm~\ref{pow2alg}.
 
It was mentioned in Section~\ref{bdsec} that considering ``low-defect expression
pairs'' $(E,C)$ or ``low-defect tree pairs'' $(T,C)$ (where $E$ is a low-defect
expression, $T$ is a low-defect tree, and $C\ge\cpx{E}$ or $C\ge\cpx{T}$, as
appropriate) may be useful.  In fact, the attached implementation works with a
tree representation essentially the same as low-defect trees and low-defect tree
pairs.  Among other things, this makes it easy to find the minimal variables to
be substituted into.  If one were actually representing low-defect polynomials
as polynomials, this would take some work.  There is a slight difference in
that, rather than simply storing a base complexity $C\ge \cpx{T}$, it stores for
each vertex or edge -- say with label $\cpx{n}$ -- a number $k$ such that $k\ge
\cpx{n}$, unless we are talking about a non-leaf vertex and $n=1$, in which case
$k=0$.  We can then determine a $C$ by adding up the values of $k$
That is to say, the complexity, rather than being
attributed to the whole tree, is distributed among the parts of the tree
responsible for it; this makes it easier to check for and remove redundant
low-defect pairs.

It was also mentioned in Section~\ref{bdsec} that one could use a representation
similar to low-defect expressions, but with all the integer constants replaced
with $+,\cdot,1$-expressions for same.  E.g., instead of $2(2x+1)$, one might
have $(1+1)((1+1)x+1)$.  We have not implemented this, but doing this woud have
one concrete benefit: It would allow the algorithms above to not only determine
the complexity of a given number $n$, but also to give a shortest
representation.  (And analogously with stable complexity.)  The current
implementation cannot consistently do this in a useful manner.  For instance,
suppose that we ran Algorithm~\ref{pow2alg} and found some $k$ with
$\cpx{2^k}=2k-1$.  We might then look at the actual low-defect pair $(f,C)$ that
$3$-represented it, to learn what this representation with only $2k-1$ ones is.
But it might turn out, on inspection, that $f$ was simply the constant $2^k$;
this would not be very enlightening.  Using $+,\cdot,1$-expressions would remedy
this, as would having low-defect pairs keep track of their ``history'' somehow.

It's also possible to write numerical versions of Proposition~\ref{minlimit},
that say exactly how far out one has to go in order to get within a specified
$\varepsilon$ of the limit $\dft(f,C)$; one could use this in
Algorithm~\ref{algtrunc} instead of simply searching larger and larger $K$ until
one works.  This was tried but found to be slower.

Finally, it is worth expanding here on the remark in Section~\ref{discussion2}
that it is possible to write Algorithm~\ref{stabalg} and
Algorithm~\ref{genalg} without using truncation.  Surprisingly little
modification is required; the only extra step needed is that, in order to check
if $n$ (or any $3^k n$) has defect less than $i\alpha$, instead of just checking
if a low-defect pair $(f,C)$ (or its augmented version) $3$-represents $n$ (or
any $3^k n$), if one finds that indeed $n=f(3^{k_1},\ldots,3^{k_r})$ (or the
appropriate equivalent), one must additionally check whether
$\dft_{f,C}(k_1,\ldots,k_r)<i\alpha$, since this is no longer guaranteed in
advance.  We will not state a proof of correctness here; it is similar to the
proofs above.  Such no-truncation versions of the algorithms were tried, but
found to be too slow to be practical, because of the time needed to check
whether the resulting polynomials $3$-represented a given number.  Another
possibility, in the case where one is using a cutoff, is to truncate only at the
final step, and not at the intermediate steps; this has not been tried.  If this
is used, it should probably be combined with not checking whether $n$ (or any
$3^k n$) is $3$-represented until the final step, for the reason just stated.

\end{document}